\documentclass[13pt, notitlepage]{article}
\usepackage{amssymb,amsmath,comment}
\usepackage{mathtools}
\catcode`\@=11 \@addtoreset{equation}{section}
\def\thesection{\arabic{section}}

\def\theequation{\thesection.\arabic{equation}}
\catcode`\@=12
\usepackage{enumitem}
 \usepackage[normalem]{ulem}
\usepackage{graphics, color, tikz, booktabs}
\usepackage{colortbl}%
\usepackage{a4wide}
\usepackage{parskip}
\usepackage{hyperref}

\newcommand{\Om} {\Omega}

\newcommand{\De} {\Delta}

\newcommand{\noi} {\noindent}
\newcommand{\na} {\nabla}

\newcommand{\mb} {\mathbb}
\newcommand{\mc} {\mathcal}

\newcommand{\X}{\mathcal{X}_0^{1,2}}

\newcommand{\ld} {\langle}
\newcommand{\rd} {\rangle}
\newcommand{\I}{\int\limits_}

\usepackage[all]{xy}
\catcode`\@=11
\newcommand{\om} {\Omega}

\def\theequation{\@arabic{\c@section}.\@arabic{\c@equation}}
\catcode`\@=12

\def\QED{\hfill {$\square$}\goodbreak \medskip}
\newtheorem{Theorem}{Theorem}[section]
\newtheorem{Lemma}[Theorem]{Lemma}
\newtheorem{Proposition}[Theorem]{Proposition}

\newtheorem{Remark}[Theorem]{Remark}
\newtheorem{Definition}[Theorem]{Definition}

\def\XXint#1#2#3{{\setbox0=\hbox{$#1{#2#3}{\int}$ }
		\vcenter{\hbox{$#2#3$ }}\kern-.6\wd0}}

\begin{document}
	{\vspace{0.01in}
		\title{On Coron problems with Choquard term and mixed operator}
		\author{
Jacques Giacomoni$^{1}$, 
Tuhina Mukherjee$^{2}$\thanks{Corresponding author: \texttt{tuhina@iitj.ac.in}}, 
Lovelesh Sharma$^{3}$ \\
\small $^{1}$Laboratoire de Mathématiques et de leurs Applications, Université de Pau et des Pays de l’Adour, France \\
\small $^{2}$Department of Mathematics, Indian Institute of Technology Jodhpur, Rajasthan 342030, India.\\
\small $^{3}$Department of Mathematics, Indian Institute of Technology Delhi, Hauz Khas, Delhi 100016, India.
}
		\date{}
		
		\maketitle
		
	\begin{abstract}
In this article, we study a Coron-type problem involving a critical Choquard nonlinearity driven by a mixed operator combining the Laplacian and fractional Laplacian. In annular-type domains, we prove the existence of nontrivial positive solutions when the inner hole is sufficiently small. Using variational methods and concentration compactness arguments, we establish a global compactness result for Palais- Smale sequences and obtain high-energy solutions using topological methods. 
We also derive regularity results for weak solutions. 
\vspace{0.5cm}

\noi \textbf{\textit{2020 Mathematics Subject Classification:}} 35A01, 35A15, 35B33, 35B65.

\vspace{0.2cm}

\noi \textbf{\textit{Keywords:}} Mixed operator, Choquard nonlinearity, Global Compactness,   Profile decomposition, Coron problem, High-energy solutions, Regularity result.
\end{abstract}	
\section{Introduction and main results}\label{I}

The aim of the present article is to study the following problem consisting of a combination of local and nonlocal operators along with critical Choquard nonlinearity 
\begin{equation}\label{1}
\begin{cases}
\displaystyle \mathcal{L}u(x)
=
\left(\int_{\Omega}\frac{u(y)^{2_\mu^\ast}}{|x-y|^\mu}\,dy\right)
u(x)^{2_\mu^\ast-1}
&\text{in }\Omega,\\[6pt]
~~\quad~u>0\ \text{in }\Omega,\\
~~~~\quad u=0\ \text{in }\mathbb{R}^n\setminus\Omega,
\end{cases}\tag{$P$}
    \end{equation}
% \begin{align}
%     \begin{cases}	
%         \mathcal{L} u = \left(\displaystyle\int_{\Omega}\frac{|u(y)|^{2_\mu^\ast}}{|x-y|^\mu}dy\right)u(x)^{2_\mu^\ast-1} & \text{in } \Omega, \\
%         ~~u>0 & \text{in } \Omega, \\
%         ~~u = 0 & \text{in } \mathbb{R}^n \setminus \Omega,
%     \end{cases} \tag{P}
% \end{align}
where $\Om$ is a bounded domain of $\mb R^n$ with $C^{1,1}$ boundary $\partial \Om$, $n \geq 3$, $0 <\mu <n$, $2_\mu^\ast =(2n-\mu)/(n-2)$. The mixed operator $\mc L$ in \eqref{1} is given by 
\begin{equation}\label{ecl}
	\mc L =-\Delta +(-\Delta)^s~\text{for some}~ s \in (0,1).
\end{equation}
The word "mixed," which refers to the type of operator combining both local and nonlocal features and to the differential order of the operator. The operator $\mc  L$ is obtained by the superposition of the classical Laplacian $(-\Delta)$ and the fractional Laplacian $(-\Delta)^s$, which for a fixed parameter $s \in (0,1)$, is defined by 
\begin{align*}
	(-\Delta)^su = C(n,s)P.V. \int_{\mb R^n} \frac{u(x)-u(y)}{|x-y|^{n+2s}}dy.
\end{align*}
The term $"P.V"$ stands for Cauchy's principal value and $C(n,s)$ is a normalizing constant whose explicit expression is given by
\begin{equation*}
	C(n,s)=\left(\int_{\mb R^n}\frac{1-\cos(z_1)}{|z|^{n+2s}}dz\right)^{-1}.
\end{equation*}
The investigation of mixed operators of the form $\mc L$ in \eqref{ecl} is driven by their broad applicability in various fields. Such operators naturally emerge in the applied sciences to describe the combined effects of local and nonlocal variations in physical phenomena. They serve as effective models for diffusion processes occurring at different time scales: the higher-order operator typically dominates the diffusion at small times, while the lower-order operator becomes significant over larger time scales. Notably, they appear in bi-modal power law distribution processes \cite{PV1}. Additional applications can be found in optimal search theory, biomathematics, and animal foraging models \cite{DLV}, as well as in several other contexts \cite{KLS, MV} and the references therein.

Motivated by both their wide range of applications and intrinsic mathematical interest, elliptic problems involving mixed operators with local and nonlocal features have recently attracted significant attention. Current research has primarily focused on questions related to the existence and regularity of solutions. In the following, we provide a brief overview of the literature concerning problems of the type
\[
-\Delta_r u + (-\Delta)_r^s u = g \quad \text{in } \Omega,
\]
where $\Omega \subset \mathbb{R}^n$ is a domain, $r \in (1, \infty)$, $s \in (0,1)$, and $-\Delta_r$ and $(-\Delta)_r^s$ denote the $r$-Laplacian and fractional $r$-Laplacian operators, respectively.

In the linear case $r = 2$, the structural properties of solutions, such as the existence of weak solutions, the strong maximum principle, local boundedness, interior Sobolev and Lipschitz regularity, and other qualitative features, have been studied in \cite{biagi2022mixed}. Other aspects, including the symmetry and properties of the first eigenvalue, are discussed in \cite{ DLV}.
For the nonlinear case $r \neq 2$ with $g = 0$, Garain and Kinnunen \cite{garain2022regularity} established regularity results for weak solutions, including local boundedness, Harnack inequalities, local Hölder continuity, and semicontinuity. In the inhomogeneous setting, boundedness and the strong maximum principle have been studied in \cite{biagi2024brezis} (see also \cite{biagi2021hong}). 

For the classical Laplacian operator $\mathcal{L} = -\Delta$ in \eqref{1}, the Coron problem was investigated by Divya et al. \cite{goel2020coron}, where the authors established the existence of a positive high-energy solution in an annular-type domain. Later, Secchi et al. \cite{secchi_coronpb_fractional} studied the Coron problem in the framework of the fractional Laplacian.
This work is motivated by the study of nonlinear problems with critical exponents, typically modelled as
\begin{equation}\label{model}
\mathcal{L}u = X(u) + \lambda Y(u) \quad \text{in } \Omega,
\end{equation}
where $\Omega \subset \mathbb{R}^n$ is a domain, $\mathcal{L}$ is a (local/nonlocal/mixed) operator, ${X}$ is a critical nonlinearity, ${Y}$ is a subcritical perturbation, and $\lambda \in \mathbb{R}$ is a parameter. A key example in the local case is the Brezis-Nirenberg problem:
\[
-\Delta u = |u|^{2^* - 2}u + \lambda u^q, \quad u|_{\partial \Omega} = 0,
\]
where $2^* = \frac{2n}{n-2}$ is the critical Sobolev exponent. For $\lambda = 0$ and star-shaped $\Omega$, no nontrivial solutions exist. However, for suitable $\lambda$, existence and multiplicity results have been established via careful minimax estimates, see \cite{brezis1983positive,ambrosetti1994combined}. 

In the nonlocal setting where $\mathcal{L} = (-\Delta)^s$ for $s \in (0,1)$, critical problems have been extensively studied. Servadei and Valdinoci \cite{servadei2015brezis} pioneered this direction for linearly perturbed problems with the critical exponent $2^*_s = \frac{2n}{n-2s}$. Barrios et al. \cite{barrios2015critical} later extended these results to cover both sublinear and superlinear perturbations. The case of critical Choquard nonlinearity $X(u)$ was treated by Mukherjee and Sreenadh \cite{mukherjee2016fractional}. They established existence results contingent on the dimension $n$ and the parameter $\lambda$, particularly distinguishing between the cases $n \geq 4s$ and $2s < n < 4s$.

In contrast, critical Choquard problems for mixed operators $\mathcal{L}$ remain relatively unexplored in the annular type domain. The foundational work of Biagi et al. \cite{biagi2025brezis} established a mixed Sobolev inequality, proving that the associated best Sobolev constant coincides with the classical constant. Recently, this analysis was extended to the Choquard nonlinearity in \cite{anthal2023choquard}, where the authors demonstrated that the corresponding critical constant for the mixed Choquard problem also equals the classical Choquard constant, leading to analogous existence results. This foundational result was then leveraged to prove the existence of solutions for both linear and superlinear subcritical perturbations.
More recently, Chakraborty et al. \cite{chakraborty2025global} obtained a global compactness result for the Brezis–Nirenberg type problem involving a mixed local–nonlocal operator. Motivated by these results, a natural open question arises to investigate the Coron problem for elliptic equations involving mixed operators with critical Choquard-type nonlinearities and parameter $\lambda=0$ in \eqref{model}. This is precisely the main objective of the present paper. The case of the critical Choquard nonlinearity,
\[
X(u) = \left(\int_{\Om} \frac{|u(y)|^{2_\mu^\ast}}{|x-y|^\mu}\,dy\right)|u|^{2_\mu^\ast-2}u,
\]
together with linear, sublinear, or superlinear perturbations, has been studied by Gao and Yang in \cite{gao2016brezis} and \cite{GY}, where they established existence and multiplicity results depending on certain ranges of $\lambda$. The critical exponent $2_\mu^\ast$ arises naturally from the well-known Hardy-Littlewood-Sobolev inequality, which plays a crucial role in applying variational methods to problem \eqref{1}.

\begin{Proposition}\cite{lieb2001analysis}\label{HLS}
Let $t, r > 1$ and $0 < \mu < n$ with $\frac{1}{t} + \frac{\mu}{n} + \frac{1}{r} = 2$, $g \in L^{t}(\mathbb{R}^n)$ and $h \in L^{r}(\mathbb{R}^n)$. \textit{There exists a sharp constant } $C(t,n,\mu,r)$, \textit{independent of } $g,h$ \textit{ such that}
\begin{equation}\label{hls}
\int_{\mathbb{R}^n} \int_{\mathbb{R}^n} \frac{g(x)h(y)}{|x-y|^{\mu}} \, dxdy \leq C(t,n,\mu,r) \|g\|_{L^t(\mathbb{R}^n)} \|h\|_{L^r(\mathbb{R}^n)}.
\end{equation}
\textit{If } $t = r = \frac{2n}{2n-\mu}$, \textit{ then } $C(t,n,\mu,r) = C(n,\mu) = \pi^{\mu/2} \frac{\Gamma\left(\frac{n-\mu}{2}\right)}{\Gamma\left(n-\frac{\mu}{2}\right)} \left\{ \frac{\Gamma\left(\frac{n}{2}\right)}{\Gamma(n)} \right\}^{-1+\mu/n}$. \textit{In this case there is equality in \eqref{hls} if and only if } $g \equiv \text{(constant)}\cdot h$ \textit{ and}
\[
h(x) = A(\gamma^{2} + |x-a|^{2})^{-\frac{(2n-\mu)}{2}}
\]
\textit{for some } $A \in \mathbb{C}$, $0 \neq \gamma \in \mathbb{R}$ \textit{ and } $a \in \mathbb{R}^n$.
\end{Proposition}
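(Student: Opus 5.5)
The plan is to prove the general inequality \eqref{hls} first by a layer-cake/Marcinkiewicz argument, and then to obtain the sharp constant and the extremals in the diagonal case $t=r=\frac{2n}{2n-\mu}$ via rearrangement and conformal invariance, following Lieb.

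\textbf{General inequality.} It suffices to treat $g,h\ge 0$. Write $|x|^{-\mu}=\int_0^\infty \chi_{\{|x|^{-\mu}>c\}}\,dc$, and likewise decompose $g$ and $h$ into level sets. This reduces \eqref{hls} to estimating triple integrals of the form
\[
\int_0^\infty\!\!\int_0^\infty\!\!\int_0^\infty I(a,b,c)\,da\,db\,dc,
\]
where $I(a,b,c)$ is the double integral of the product of the characteristic functions of the level sets. Each $I(a,b,c)$ is bounded by the product of the measures of any two of the three sets; one uses the smallest two, i.e.\ $I\le \min\{uv,\,uw,\,vw\}$, where $u,v,w$ are the measures of the level sets of $g$, $h$ and the kernel. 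Splitting the $c$-integral according to which measure is largest and integrating explicitly yields a bound by $\|g\|_t^t\|h\|_r^r$ for normalized $g,h$. The relation $\frac1t+\frac\mu n+\frac1r=2$ is precisely what makes the resulting powers match. Homogeneity, after normalizing $\|g\|_t=\|h\|_r=1$, then gives \eqref{hls}. An alternative route is weak Young's inequality, since $|x|^{-\mu}\in L^{n/\mu,\infty}$, combined with Marcinkiewicz interpolation.

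\textbf{Sharp constant.} In the diagonal case, Riesz's rearrangement inequality shows that the supremum of the left side of \eqref{hls} over $\|g\|_t=\|h\|_t=1$ may be restricted to symmetric decreasing functions. The form is positive definite, since the Fourier transform of $|x|^{-\mu}$ is positive, so Cauchy--Schwarz in the associated inner product gives
\[
\int\!\!\int \frac{g(x)h(y)}{|x-y|^{\mu}}\,dx\,dy\le D(g,g)^{1/2}D(h,h)^{1/2}.
\]
It therefore suffices to maximize over $g=h$.

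\textbf{Existence and identification of extremals.} I would exploit the conformal invariance of the functional with exponent $t=\frac{2n}{2n-\mu}$: under inversion $x\mapsto x/|x|^2$ with $g\mapsto |x|^{-2n/t}g(x/|x|^2)$, both the norm and the double integral are preserved. To get existence, I would take a maximizing sequence of symmetric decreasing functions and normalize via dilations so that concentration and vanishing are excluded. Then the pointwise bound $g(x)\le C|x|^{-n/t}$ for symmetric decreasing functions in $L^t$, together with Helly's selection theorem, yields a.e.\ convergence, and Brezis--Lieb shows that the limit is a maximizer.

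For identification, a maximizer $h$ and its inversion-transform are both maximizers. By strict rearrangement (strict Riesz in the case $\mu<n$), every maximizer is symmetric decreasing about some point, which forces the inverted function to be symmetric about some center as well. Lieb's lemma states that a function which is symmetric decreasing about a point and whose inversion is also symmetric decreasing about some point must be of the form $A(\gamma^2+|x-a|^2)^{-n/t}$, where $n/t=\frac{2n-\mu}{2}$. Equality in Cauchy--Schwarz gives $g=\text{const}\cdot h$. Finally, evaluating the functional on $h=(1+|x|^2)^{-(2n-\mu)/2}$, via stereographic projection to $S^n$ and beta-function integrals, produces $C(n,\mu)$.

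The main obstacle is the characterization of extremals, namely the strict rearrangement step combined with the inversion-symmetry lemma. Since the Proposition is quoted from \cite{lieb2001analysis}, in the paper I would simply cite Lieb's proof rather than reproduce it.
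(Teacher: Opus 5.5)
The paper does not prove this proposition. It quotes the result verbatim from Lieb--Loss, so your plan to cite it rather than reproduce it is exactly what the paper does. Your outline has the right overall structure:
\begin{itemize}
\item the layer-cake bound with $I\le\min\{uv,uw,vw\}$, as in Lieb--Loss;
\item Riesz rearrangement, positive definiteness of $|x|^{-\mu}$ and Cauchy--Schwarz to reduce to $g=h$;
\item existence of a symmetric decreasing maximizer;
\item identification of the maximizers via conformal invariance.
\end{itemize}
For the sharp constant you follow Lieb's original 1983 route. The textbook the paper cites instead uses the Carlen--Loss method of competing symmetries, which yields $C(n,\mu)$ without first proving that a maximizer exists. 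Either route is acceptable here.

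One step, however, is false as you state it: the lemma that a function which is symmetric decreasing about a point, and whose inversion is symmetric decreasing about some point, must be a bubble. If the inversion $g\mapsto |x|^{-(2n-\mu)}g(x/|x|^2)$ is centered at the center of symmetry, the hypothesis carries no information. For example, $g(x)=\min\{1,|x|^{-(2n-\mu)}\}$ is symmetric decreasing about $0$, lies in $L^{2n/(2n-\mu)}(\mathbb{R}^n)$, and is invariant under this inversion, yet it is constant on the unit ball and so is not of the form $A(\gamma^2+|x-a|^2)^{-(2n-\mu)/2}$. Hence ``$h$ and its inversion are both maximizers'' gives nothing when $h$ is centered at $0$.

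The repair uses translation invariance. Normalize the maximizer $h$ to be symmetric decreasing about $0$, and invert a nontrivial translate: set $k(x)=|x|^{-(2n-\mu)}h(x/|x|^2-a)$ with $a\neq 0$. Equivalently, compose with a conformal map of $S^n$ that moves the center of symmetry. Then $k$ is again a maximizer, so it is radial about some point $b$. Restrict the identity $h(x/|x|^2-a)=|x|^{2n-\mu}k(x)$ to spheres centered at $b$. On each such sphere both $|x|^2$ and $|x/|x|^2-a|^2|x|^2$ are affine in $x\cdot a$. For $n\ge 2$ this gives an overdetermined one-variable functional equation, and it forces $h(x)=A(\gamma^2+|x|^2)^{-(2n-\mu)/2}$.
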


In general, letting $g = h = |u|^{q}$ in the Hardy-Littlewood-Sobolev inequality, we get
\[\int_{\mathbb{R}^n} \int_{\mathbb{R}^n} \frac{|u(x)|^{q} |u(y)|^{q}}{|x-y|^{\mu}} dxdy\]
is well defined if $|u|^{q} \in L^{t}(\mathbb{R}^n)$ for some $t > 1$ satisfying
\[\frac{2}{t} + \frac{\mu}{n} = 2.\]
Thus, for $u \in H^{1}(\mathbb{R}^n)$, by Sobolev Embedding theorems, we must have
\[\frac{2n-\mu}{n} \leq q \leq \frac{2n-\mu}{n-2}.\]
Thus, $\frac{2n-\mu}{n}$ is called the lower critical exponent and $2^*_{\mu}=\frac{2n-\mu}{n-2}$ is the upper critical exponent in the sense of the Hardy-Littlewood-Sobolev inequality. From this, for $u \in H^1(\mb R^n) $ we have
\[
\left(\int\limits_{\mb R^n}\int\limits_{\mb R^n}\frac{|u(x)|^{2_\mu^\ast}|u(y)|^{2_\mu^\ast}}{|x-y|^\mu}dxdy \right)^\frac{1}{2_\mu^\ast} \leq C(n,\mu)^\frac{1}{2_\mu^\ast}  \|u\|^2_{L^{2^*}(\mathbb{R}^n)}.
\]

Current research has focused particularly on the existence of solutions in annular-type domains when the inner hole is sufficiently small and has also established important regularity properties, showing that solutions belong to $L^{\infty}(\Omega)$ and, moreover, to $C^{2,\beta}(\overline{\Omega})$ every \(\beta\) such that 
\(0 < \beta < \min\{s,\alpha,n-\mu\}\) (see Theorem \ref{thm:7.1} and Theorem \ref{thm1.5}). Drawing on foundational results in profile decomposition namely, Struwe's analysis for the Laplacian \cite{Struwe-Z} and the work of Palatucci et al. for the fractional Laplacian \cite{Pala-Pis-Non}, we investigate the profile decomposition of Palais-Smale sequences corresponding to an energy functional that involves both the Laplacian and fractional Laplacian operators. Since Struwe's seminal contribution \cite{Struwe-Z}, the well-known Global Compactness theorem in the Sobolev space $H^1$ has become a fundamental tool in analysis, supporting existence results in various areas such as ground state solutions of nonlinear Schrödinger equations, Yamabe-type problems in conformal geometry, and the prescription of $Q$-curvature. In Theorem~\ref{PS_decomposition}, we extend Struwe's Global Compactness framework to our setting by providing a profile decomposition for Palais–Smale sequences associated with the energy functional $I$ defined in \eqref{eqI}. The main difficulty in our analysis arises from the lack of a common noncompact scaling group that acts on the space $\X(\Omega)$, which, unlike the classical Sobolev space, makes the decomposition process more complicated. In this article, we establish a global compactness result by introducing the notion of Morrey spaces; see definition \eqref{Ms}.
Finally, using the concentration–compactness principle together with the deformation lemma, we prove the existence of a positive high-energy solution. To the best of our knowledge, there is no previous work on the Coron problem for Choquard equations involving mixed operators. Motivated by this discussion, in the present work, we study problem \eqref{1}. Our goal in this paper is to fill this gap.

A central idea in their proof was the use of rescaled functions $u_t(x) = t^{\frac{n-2}{2}} u(tx)$, which allowed the nonlocal term to be effectively diminished. In particular, the Gagliardo seminorm satisfies
\[
[u_t]_s^2 = t^{2s-2}[u]_s^2 \to 0 \quad \text{as } t \to \infty,
\]
combined with the compact embedding of $\X(\Omega)$ into $L^2(\Omega)$, this observation was crucial in identifying the limiting profiles in Struwe's global compactness result, see \cite{struwe2000variational, goel2020coron}, where the profiles correspond to solutions of problem \eqref{eq4.7}.  
This naturally led to the question of whether the space $\X(\Omega)$ admits a compact embedding into $H^s(\Omega)$, which was answered affirmatively in [\cite{chakraborty2025global}, Lemma \ref{compact_embedding}]. Consequently, following the approach in \cite{chakraborty2025global} and \cite{goel2020coron}, this allowed us to reduce to the Laplacian case and establish the following global compactness result.

 \begin{Theorem}\label{PS_decomposition}  
Let $\{u_k\}_{k \in \mathbb{N}} \subset \mathcal{X}^{1,2}_0(\Omega) $ be such that $I(u_k) \to c$, $I'(u_k) \to 0$. Then, passing, if necessary, to a subsequence, there exists a nonnegative solution $v_0 \in \mathcal{X}^{1,2}_0(\Omega)  $ of
\begin{equation}\label{eq4.6}
    \mathcal{L} u = \left(\int_{\Omega} \frac{u(y)^{2^*_{\mu}}}{|x-y|^{\mu}} \, dy \right) u^{2^*_{\mu}-1}  \quad \text{in } \Omega 
\end{equation}
and  $k \in \mathbb{N} \cup \{0\}$, nonnegative nontrivial solutions $v_1, v_2, \ldots, v_k$ of
\begin{equation}\label{eq4.7}
    (-\Delta) u = (|x|^{-\mu} * u^{2^*_{\mu}}) u^{2^*_{\mu}-1}  \quad \text{in } \mathbb{R}^n 
\end{equation}
with $v_i \in D^{1,2}(\mathbb{R}^n)$ and $k$ sequences $\{y_k^i\}_{k \in \mathbb{N}} \subset \mathbb{R}^n$ and $\{\lambda_k^i\}_{k \in \mathbb{N}} \subset \mathbb{R}_+$, $i=1,2,\ldots,k$,
satisfying
\[
\frac{1}{\lambda_k^i} \operatorname{dist}(y_k^i, \partial \Omega) \to \infty
\quad \text{and} \quad 
    \left\|u_k - v_0 - \sum_{i=1}^k (\lambda_k^i)^{\frac{2-n}{2}} v_i\left( \frac{x - y_k^i}{\lambda_k^i} \right)\right\|_{D^{1,2}(\mathbb{R}^n)} \to 0 \quad \text{as } k \to \infty,
\]
and
\begin{equation}\label{eq4.8}
 I(u_k) \to  I(v_0) + \sum_{i=1}^k J(v_i) = c~~~\text{as}~~k\to\infty 
    \end{equation}
    where energy functionals $I$ and $J$ are defined in \eqref{eqI} and \eqref{Ener-Inf} respectively.
\end{Theorem}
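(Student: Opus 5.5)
We follow the scheme of Struwe's global compactness argument, adapted as in \cite{goel2020coron} for the Choquard term and \cite{chakraborty2025global} for the mixed operator, and proceed by iterated extraction of bubbles.

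\textbf{Step 1: boundedness and the weak limit.} Combining $I(u_k)\to c$ with $\langle I'(u_k),u_k\rangle = o(1)\|u_k\|$ gives
\[
I(u_k)-\tfrac{1}{2\cdot 2_\mu^\ast}\langle I'(u_k),u_k\rangle=\Big(\tfrac12-\tfrac1{2\cdot 2_\mu^\ast}\Big)\|u_k\|^2_{\X}.
\]
Hence $\{u_k\}$ is bounded in $\X(\Omega)$. Testing $I'(u_k)$ with $u_k^-$ shows $\|u_k^-\|\to 0$, so we may assume $u_k\ge 0$. Up to a subsequence, $u_k\rightharpoonup v_0$ weakly in $\X(\Omega)$. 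By Lemma \ref{compact_embedding}, the convergence is strong in $H^s$ and in $L^2$, and it also holds a.e. Weak continuity of the Choquard term (HLS and the Brezis--Lieb type splitting for the nonlocal term, as in Gao--Yang) then shows that $v_0$ is a nonnegative weak solution of \eqref{eq4.6}.

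\textbf{Step 2: splitting.} Set $w_k=u_k-v_0$. Three ingredients give the splitting of energy and derivative:
\begin{itemize}
\item the strong convergence $[w_k]_s\to 0$, coming from the compact embedding;
\item the Hilbert-space identity $\|\nabla u_k\|_2^2=\|\nabla v_0\|_2^2+\|\nabla w_k\|_2^2+o(1)$;
\item the nonlocal Brezis--Lieb lemma for $\iint |u|^{2_\mu^\ast}|u|^{2_\mu^\ast}|x-y|^{-\mu}$.
\end{itemize}
Together they yield
\[
I(u_k)=I(v_0)+J(w_k)+o(1), \qquad J'(w_k)\to 0 \ \text{in } (D^{1,2})'.
\]
The fractional part disappears at this stage, which is exactly why the profiles solve the purely local equation \eqref{eq4.7}. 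If $w_k\to0$ in $D^{1,2}$ we stop with $k=0$.

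\textbf{Step 3: extracting a bubble.} If $w_k\not\to0$, then $\|\nabla w_k\|_2^2\ge \delta>0$. We choose the scale by a concentration function, for instance $\sup_y\int_{B_{\lambda_k}(y)}|\nabla w_k|^2=\varepsilon_0$ with $\varepsilon_0$ below the Choquard--Sobolev threshold. Here the Morrey-space refinement of Sobolev's inequality mentioned in the introduction (\eqref{Ms}) guarantees that the Morrey norm of $w_k$ stays bounded away from zero. This produces $y_k,\lambda_k\to0$ such that the rescaled $\tilde w_k(x)=\lambda_k^{(n-2)/2}w_k(\lambda_k x+y_k)$ has a nontrivial weak limit $v_1$ in $D^{1,2}(\mathbb R^n)$.

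We then rule out concentration at the boundary. Suppose $\lambda_k^{-1}\mathrm{dist}(y_k,\partial\Omega)$ stays bounded. Then $v_1$ would be a nontrivial solution of \eqref{eq4.7} on a half-space vanishing on the boundary, which is impossible by the Pohozaev identity together with unique continuation. Hence $\lambda_k^{-1}\mathrm{dist}(y_k,\partial\Omega)\to\infty$, and $v_1$ solves \eqref{eq4.7} on $\mathbb R^n$; it is nonnegative since $w_k^-\to0$.

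\textbf{Step 4: iteration.} Set $w_k^{(2)}=w_k-\lambda_k^{(2-n)/2}v_1((\cdot-y_k)/\lambda_k)$. Scale invariance of $J$ and the same Brezis--Lieb splittings give
\[
J(w_k)=J(v_1)+J(w^{(2)}_k)+o(1)
\]
together with the analogous statement for $J'$. Every nontrivial solution of \eqref{eq4.7} satisfies $J(v)\ge \tfrac{n+2-\mu}{2(2n-\mu)}S_{H,L}^{\frac{2n-\mu}{n+2-\mu}}>0$. Since the energy is bounded, the procedure therefore terminates after finitely many steps, which yields the decomposition and \eqref{eq4.8}.

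The main obstacle is Step 3. The first difficulty is choosing scales and centers so that the blow-up limit is nontrivial; this requires a lower bound on concentration, for which we use the Morrey refinement. The second is verifying that the rescaled Choquard term passes to the limit, i.e.\ the local-in-$L^{2^\ast}$ convergence needed for the nonlocal convolution.
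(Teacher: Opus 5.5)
Your proposal is correct and follows the paper's proof essentially step for step. It has the same chain: boundedness from $I(u_k)-\frac{1}{2\cdot 2^*_\mu}\langle I'(u_k),u_k\rangle$, a weak limit $v_0$ with the Gagliardo part of $u_k-v_0$ removed by the compact embedding of Lemma \ref{compact_embedding}, extraction of a nontrivial profile via the Morrey-space refinement of Sobolev's inequality, exclusion of boundary concentration by a half-space Liouville argument, and termination by the energy quantum $\beta$. Your extra step $\eta(u_k^-)\to0$, obtained by testing $I'(u_k)$ with $u_k^-$, usefully justifies the nonnegativity of the profiles, which the paper leaves implicit. One point to tighten: choose $(y_k,\lambda_k)$ as near-maximizers of the Morrey quotient $\lambda^{-2}\int_{B_\lambda(y)}|w_k|^2$, as the paper does, so that $L^2_{\mathrm{loc}}$ compactness gives $v_1\neq0$ directly; choosing them by the gradient concentration function would instead require Struwe's local $\varepsilon$-regularity argument.
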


As shown in Theorem \ref{thm:best_constant}, the best constant $S_{H,L,M}$ in the associated Sobolev inequality is never attained, which implies that ground state solutions to the critical Choquard problem do not exist. In such circumstances, profile decomposition becomes an essential tool, as it precisely identifies the levels at which compactness breaks down. This framework not only clarifies the underlying concentration phenomena but also determines the energy thresholds where one must seek higher energy solutions.
Guided by this observation, the search for high-energy solutions arises naturally. Two classical strategies are commonly employed to address the lack of compactness: one introduces suitable perturbations into the equation, while the other exploits the topology or geometry of the domain. The latter gives rise to the celebrated Coron problem, which demonstrates how topological features of the domain can restore compactness and thereby guarantee the existence of solutions. A substantial body of work has developed along these lines, employing profile decomposition to establish existence and multiplicity results; see, for example, \cite{  sergio_multiple_soln}.

The study of critical elliptic equations on non-contractible domains was initiated by Coron in 1984. In particular, Coron \cite{coron1984topologie} proved the existence of a positive solution to the critical problem  
\begin{equation} \label{bh83}
\left\{
\begin{aligned}
-\De u &= u^{\frac{n+2}{n-2}}, \quad u > 0 \quad \text{in } \Om, \\
u &= 0 \qquad\qquad\qquad~\text{on } \partial \Om,
\end{aligned}
\right.
\end{equation}
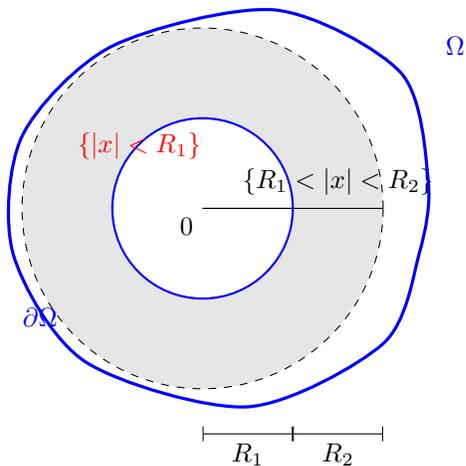
\begin{figure}[h!]
    \centering
    \begin{tikzpicture}[scale=1.2]
        % Parameters (for clarity)
        % R1 = 1cm, R2 = 2cm in this picture

        % Draw the annulus (the region that MUST be inside Omega)
        \filldraw[fill=gray!20, draw=black, dashed] (0,0) circle (2cm);
        \filldraw[fill=white, draw=black] (0,0) circle (1cm);
        \node at (1.5, 0.3) {$\{R_1 < |x| < R_2\}$};

        % Draw the smooth domain Omega (a wobbly curve that obeys the conditions)
        \draw[blue, very thick, smooth cycle] plot coordinates {
            (2.5, 0.3) (2.2, 1.5) (0.8, 2.2) (-1.0, 1.8) (-2.0, 0.8)
            (-2.1, -0.5) (-1.3, -1.7) (0.5, -2.2) (2.0, -1.5) (2.4, -0.5)
        };
        \node[blue] at (2.8, 1.8) {$\Omega$};
        \node[blue] at (-1.8, -1.2) {$\partial\Omega$};

        % Draw the inner ball (red label, blue boundary)
        \draw[blue, thick] (0,0) circle (1cm);
        \node[red] at (-0.7, 0.7) {$\{ |x| < R_1 \}$};

        % Draw the origin and labels for R1 and R2
        \draw[black] (0,0) node[anchor=north east] {$0$} -- (1,0);
        \draw[black] (0,0) -- (2,0);
        \draw[|-|] (0, -2.5) -- node[below] {$R_1$} (1, -2.5);
        \draw[|-|] (1, -2.5) -- node[below] {$R_2$} (2, -2.5);
    \end{tikzpicture}
    \caption{Geometric condition on the domain $\Omega$. The annulus $\{R_1 < |x| < R_2\}$ (gray region) is contained within $\Omega$ (blue boundary). The ball $\{ |x| < R_1 \}$ (red label, blue boundary) is \textbf{not} entirely contained in $\overline{\Omega}$.}
    \label{fig:domain_condition}
\end{figure}

Later, Bahri and Coron \cite{bahri1987nonlinear} extended this result by showing that if there exists a positive integer $d$ with $H_d(\Om, \mb Z_2) \neq 0$, then problem \eqref{bh83} also admits a positive solution.  
Since then, several extensions of the Coron problem have been studied for different classes of nonlinear operators. For instance, the $p$ Laplacian case was investigated in \cite{mercuri_coron_p_lap}, the fractional Laplacian in \cite{secchi_coronpb_fractional}, and analogous results for systems such as the Lane–Emden system were obtained in \cite{jin_coron_lane_emden}.  
In view of the profile decomposition established in Theorem \ref{PS_decomposition} for the Choquard critical problem \eqref{1}, we now focus on proving a Coron-type result for our setting. By developing an appropriate variational framework and applying concentration–compactness methods, we derive the following existence theorem for high-energy solutions.  

\begin{Theorem}\label{mainthm}
Assume that $\Om \subset \mb R^n$ is a bounded domain satisfying the conditions  
\begin{align}
(i)& \quad \{x \in \mb R^n : R_1 < |x| < R_2 \} \subset \Om,\label{eq1.6}\\
(ii)& \quad \{x \in \mb R^n : |x| < R_1 \} \not\subset \overline{\Om},\label{eq1.7}
\end{align}
for some $0 < R_1 < R_2 < +\infty$. Then there exists $n_0 \in \mb N$ such that for all $n \geq n_0$, provided the ratio $\tfrac{R_2}{R_1}$ is sufficiently large (depending on $n$), the problem \eqref{1} admits a positive high-energy solution in $\mc X^{1,2}_0(\Om)$ satisfying 
\[
\frac{1}{2}\left(\frac{n-\mu+2}{2n-\mu}\right) S^{\frac{2n-\mu}{n-\mu+2}}_{H,L,C} 
< I(u) < 
\left(\frac{n-\mu+2}{2n-\mu}\right) S^{\frac{2n-\mu}{n-\mu+2}}_{H,L,C}.
\]
\end{Theorem}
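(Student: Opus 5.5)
We follow the Coron scheme as adapted to the Choquard–Laplacian case in \cite{goel2020coron} and to the fractional case in \cite{secchi_coronpb_fractional}. Throughout we use Theorem \ref{PS_decomposition} together with the fact that $S_{H,L,C}$, the best constant for the mixed operator, coincides with the classical Choquard–Sobolev constant and is never attained.

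\textbf{Step 1 (compactness window).} Set $\beta:=\frac{n-\mu+2}{2(2n-\mu)}S_{H,L,C}^{\frac{2n-\mu}{n-\mu+2}}$, which is the energy $J(U)$ of a ground state $U$ of \eqref{eq4.7}. Let $(u_k)$ be a Palais–Smale sequence for $I$ (restricted to nonnegative functions, or for the functional with positive parts) at a level $c\in(\beta,2\beta)$. By Theorem \ref{PS_decomposition},
\[
c=I(v_0)+\sum_i J(v_i).
\]
For nonnegative solutions of \eqref{eq4.7} we have $J(v_i)\ge\beta$, and for $v_0\ne0$ we have $I(v_0)>\beta$, since the constant is not attained on the bounded domain. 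Nonnegative solutions of \eqref{eq4.7} are classified as the bubbles, so $J(v_i)=\beta$ exactly. The level $c=\beta$ itself is excluded by the non-attainment. Hence at most one profile is present: either $v_0\neq0$ with no bubbles, or $v_0=0$ with exactly one bubble and $c=\beta$. The second case is impossible in $(\beta,2\beta)$, so $(u_k)$ converges strongly to a critical point $v_0\neq0$. Positivity then follows from the strong maximum principle for $\mathcal L$.

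\textbf{Step 2 (topological construction).} Argue by contradiction: assume $I$ has no critical point on the Nehari manifold $\mathcal N$ at levels in $(\beta,2\beta)$. Translating by $-\mathrm{dist}$ and rescaling so that $R_1=\epsilon$, $R_2=1/\epsilon$, define the barycenter map $\Phi(u)=\int x\,|\nabla u|^2$ normalized on $\mathcal N$. Consider the family
\[
u_{\sigma,t}=\text{projection onto }\mathcal N\text{ of the cut-off of } U_{\lambda(t),\,\sigma(1-t)},\qquad \sigma\in S^{n-1},\ t\in[0,1],
\]
where the concentration parameter $\lambda(t)$ goes from very concentrated at $t=0$ (bubbles at points $\sigma$ of the annulus) to spread out at $t=1$ (centered at $0$, of width comparable to the annulus). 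The required estimates are:
\begin{enumerate}[label=(\alpha)]
\item[(a)] $\sup_{\sigma,t} I(u_{\sigma,t})<2\beta$ once $R_2/R_1$ is large.
\item[(b)] At $t=0$ the energy is close to $\beta$, and the barycenter is close to $\sigma$.
\end{enumerate}
The key point in (a) is that the fractional term contributes $[u_{\lambda}]_s^2=\lambda^{2s-2}[u]_s^2$ for the concentrated or cut-off bubbles. When these bubbles are widely spread this term is not negligible, and this is where the requirement $n\ge n_0$ (and dependence on $n$) enters. In high dimensions the ratio between the fractional energy and the gap $2\beta-\beta$ can be made small, and the Choquard energy of a cut-off spread bubble stays below $2^{\frac{n-\mu+2}{2n-\mu}}\beta$-type bounds, which are below $2\beta$.

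\textbf{Step 3 (deformation and contradiction).} Use the deformation lemma together with Step 1 to deform the sublevel set $\{I\le 2\beta-\delta\}\cap\mathcal N$ into $\{I\le\beta+\delta\}$. On this last set, a non-compact Palais–Smale analysis (Theorem \ref{PS_decomposition} with one bubble, $v_0=0$) shows that functions concentrate at a point $y$ with $\operatorname{dist}(y,\partial\Omega)/\lambda\to\infty$. Hence the barycenter lies in a small neighborhood of $\overline\Omega$, which by (ii) misses a point $x_0$ with $|x_0|<R_1$.

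Composing with the family from Step 2 yields a homotopy $S^{n-1}\times[0,1]\to\mathbb R^n\setminus\{x_0\}$ connecting the essential map $\sigma\mapsto\sigma$ to a constant. This is a contradiction, since $\sigma\mapsto\sigma$ has degree $1$ around $x_0$. The critical point obtained therefore has energy in $(\beta,2\beta)$, matching the stated bounds.

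\textbf{Main obstacle.} The main obstacle is estimate (a): uniform control of the mixed energy of the spread-out test functions, including the nonlocal tails of the cut-off, below $2\beta$. I would first try explicit HLS and Gagliardo bounds on $U$ that scale favorably in $n$.
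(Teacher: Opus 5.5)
Your architecture is the paper's. You use the compactness window $(\beta,2\beta)$ obtained from Theorem \ref{PS_decomposition} and the classification of bubbles (this is Lemma \ref{beta}). You build a family of cut-off bubbles joining bubbles concentrated at $\sigma\in\Sigma$ to a spread bubble centred at $0$. You close with a deformation and barycentre argument contradicting \eqref{eq1.7}. Working with $I$ on the Nehari manifold instead of $S(\cdot,\Omega)$ on $\mathcal M$ is only a change of normalization (cf.\ Lemma \ref{lem2.88}).

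The genuine gap is estimate (a). You state it and call it the main obstacle, but you give no argument for it. It is exactly the step that produces $n_0$, and the only point where the mixed operator really departs from the Laplacian case. Concretely:
\begin{itemize}
\item $\|\nabla u_t^\sigma\|_{L^2}$ and $\|u_t^\sigma\|_0$ do not depend on $(\sigma,t)$.
\item $[u_t^\sigma]_s^2=(1-t)^{2-2s}[u_0]_s^2\le[u_0]_s^2$, so the spread bubble is the worst case.
\item The cut-off errors vanish uniformly as $R\to\infty$ (Lemmas \ref{Lem:seminormConvergence} and \ref{1lem5.3}).
\end{itemize}
Hence everything reduces to the single inequality
\[
[u_0]_s^2<\Big(2^{\frac{n-\mu+2}{2n-\mu}}-1\Big)\int_{\mathbb{R}^n}|\nabla u_0|^2\,dx
\]
for the spread bubble $u_0$ (Lemma \ref{lem4.6}). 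No HLS estimate is needed here: $u_0$ is an extremal, so $\|\nabla u_0\|_{L^2}^2=S_{H,L,C}\|u_0\|_0^2$ exactly. The paper proves the inequality by combining three facts:
\begin{itemize}
\item an interpolation bound $[u]_s^2\le \frac{C\,\omega_{n-1}}{s(1-s)}\|u\|_{L^2}^{2(1-s)}\|\nabla u\|_{L^2}^{2s}$, with constant proportional to $\omega_{n-1}$;
\item the explicit value $\|\nabla u_0\|_{L^2}^2/\|u_0\|_{L^2}^2=\frac{n(n-2)(n-4)}{4(n-1)}$;
\item $\omega_{n-1}=2\pi^{n/2}/\Gamma(n/2)\to0$ as $n\to\infty$.
\end{itemize}
Without some such computation your argument yields no $n_0$. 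Your hint about ``HLS and Gagliardo bounds that scale favorably in $n$'' points in this direction but is not yet a proof.

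Your threshold is also mislabelled. What is needed is $S(u,\Omega)<2^{\frac{n-\mu+2}{2n-\mu}}S_{H,L,C}$, which on the Nehari manifold is exactly $I(u)<2\beta$. A bound of the energy by $2^{\frac{n-\mu+2}{2n-\mu}}\beta$ mixes the quotient and energy scales, and is stronger than necessary.

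Two smaller points.
\begin{itemize}
\item Elements of the sublevel set $\{I\le\beta+\delta\}$ are not Palais--Smale sequences. To locate their barycentres, you must first pass to a nearby Palais--Smale sequence via Ekeland's principle, as the paper does, before applying Theorem \ref{PS_decomposition} or concentration--compactness.
\item Dilations are not symmetries of $\mathcal L$, since $[u_\lambda]_s^2=\lambda^{2s-2}[u]_s^2$, so the normalization $R_1=\epsilon$, $R_2=1/\epsilon$ is not free. The paper makes the same normalization. A spread profile of width $r$ costs an extra factor $r^{2-2s}$ in the fractional term. Your ``width comparable to the annulus'' must therefore mean width of order the radius of $\Sigma$, with $R_1\ll r\ll R_2$ and $r$ of order one, for the reduction to $u_0$ above to hold.
\end{itemize}
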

%%%%%%%%%%%%%%%%%%%%%%%%%%%%%%%%%%%%%%%%%%%%%%%%%%%%%%%%%
The proof of Theorem \ref{mainthm} is carried out by contradiction. To this end, we introduce a two-parameter family of test functions depending on $R \gg 1$, define as  
\begin{equation}\label{familyOfBubbles:Intro}
\mc A_R = \left\{ \phi_R \mathcal{U}[t\sigma,(1-t)] : \phi_R \in C^{\infty}_c(\mb R^{n}), \; t \in [0,1), \; \sigma \in \Sigma \right\},
\end{equation}
where \(\Sigma := \{ x \in \mb R^n : |x| = 1 \} \), $\mathcal{U}$ denotes the standard Aubin–Talenti bubble (see \eqref{eq5.1}), $\phi_R$ is a cut-off function, and parameters $t$ and $\sigma$ encode the concentration and translation of the bubbles within the domain. We then estimate the energy associated with this family and show that, for sufficiently large dimension $n$ and an appropriately large ratio $\tfrac{R_2}{R_1}$, it remains strictly below the threshold  
\(
2^{\frac{n-\mu+2}{2n-\mu}} S_{H,L,C} \quad \text{(see Lemma \ref{lem4.6}).}
\)  
This estimate is crucial in proving the existence of a high-energy solution, as it permits a deform the bubble family whose energy approaches the first critical level. Assuming, by contradiction, that no positive nontrivial solution exists, this level $2^{\frac{n-\mu+2}{2n-\mu}} S_{H,L,C}$ becomes the next critical threshold after $S_{H,L,C}$ at which the Palais–Smale condition fails. Basically, in the variational analysis of problem \eqref{1}, we work with two closely related functionals: the energy functional $I(u)$ and the Sobolev-type quotient functional $S(u, \Omega)$ defined in \eqref{eqS}. While the critical points of $I$ correspond directly to solutions of \eqref{1}, the scale-invariant nature of $S(\cdot, \Omega)$ is more suitable for topological arguments. A crucial bridge between these two frameworks is established in Lemma \ref{lem2.88}, which shows that a Palais-Smale sequence for $S(\cdot, \Omega)$ at level $c$ corresponds to a Palais-Smale sequence for $I$ at level $\frac{n-\mu+2}{2(2n-\mu)} c^{\frac{2n-\mu}{n-\mu+2}}$. This relationship maps the non-compactness threshold from $\left(S_{H,L,C}, 2^{\frac{n-\mu+2}{2n-\mu}} S_{H,L,C}\right)$ for $S(\cdot, \Omega)$ to $(\beta, 2\beta)$ for $I$, where $\beta = \frac{1}{2} \left( \frac{n-\mu+2}{2n-\mu} \right) S_{H,L,C}^{\frac{2n-\mu}{n-\mu+2}}$ is the least energy, (see Lemma \ref{1lem2.8}).

Finally, by applying a deformation lemma, we construct a homotopy that retracts the unit sphere $\Sigma \subset \Om$ continuously to a single point in $\Om$. This, however, contradicts the topological condition \eqref{eq1.7}.

We state our regularity results below.
\begin{Theorem}\label{thm:7.1}
Let \(0 \leq u \in \mathcal{X}^{1,2}_0(\Omega)\) be a weak solution of \eqref{1}, that is,
\[
\int_{\mathbb{R}^n} \nabla u \cdot \nabla \varphi \, dx 
+ \int_{\mathbb{R}^{2n}} \frac{(u(x) - u(y))(\varphi(x) - \varphi(y))}{|x - y|^{n + 2s}} \, dxdy
= \int_{\Omega} \int_{\Omega} 
\frac{|u_+(y)|^{2^*_\mu} |u_+(x)|^{2^*_\mu - 1}  \varphi(x)}{|x - y|^{\mu}} \, dydx,
\]
for every test function \(\varphi \in {\mathcal{X}^{1,2}_0(\Omega)}\).  
Then \(u \in L^\infty(\mathbb{R}^n)\).
\end{Theorem}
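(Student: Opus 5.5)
The plan is a Brezis--Kato type argument combined with Moser iteration. The one real difficulty is that the Choquard nonlinearity is critical, so the proof has to begin by reducing it to a linear potential that lies in the borderline Lebesgue space.

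\emph{Step 1: the Riesz potential term.} Since $u\in\X(\Omega)\subset L^{2^*}(\Omega)$, we have $u^{2^*_\mu}\in L^{\frac{2n}{2n-\mu}}(\Omega)$. By the Hardy--Littlewood--Sobolev inequality in its potential form (Proposition~\ref{HLS}, dual version), this gives
\[
K(x):=\int_\Omega \frac{u(y)^{2^*_\mu}}{|x-y|^\mu}\,dy\in L^{\frac{2n}{\mu}}(\mathbb{R}^n).
\]
Writing the right-hand side as $a(x)u$ with $a=K\,u^{2^*_\mu-2}$, Hölder's inequality places $a$ in $L^{n/2}(\Omega)$: indeed $u^{2^*_\mu-2}\in L^{\frac{2n}{2n-\mu-... }}$, more precisely in $L^{\frac{2^*}{2^*_\mu-2}}=L^{\frac{2n}{n-\mu+2}}$, and $\frac{\mu}{2n}+\frac{n-\mu+2}{2n}=\frac{2}{n}$. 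This is exactly the critical integrability for the Brezis--Kato lemma. The key feature is that $a\in L^{n/2}$ can be split as $a=a\chi_{\{|a|>M\}}+a\chi_{\{|a|\le M\}}$, where the first piece has arbitrarily small $L^{n/2}$ norm once $M$ is large.

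\emph{Step 2: improved integrability.} Test the equation with $\varphi=u\,\min(u,L)^{2(\beta-1)}$, $\beta>1$, which lies in $\X(\Omega)$. For the local part we get the standard lower bound by $c_\beta\|\nabla(u\min(u,L)^{\beta-1})\|_2^2$. For the nonlocal part we use the elementary inequality $(a-b)(a\,g(a)-b\,g(b))\ge 0$ for monotone truncations, so that term is nonnegative and can simply be dropped. This is the point where the mixed operator causes no harm. Sobolev's inequality then bounds $\|u\min(u,L)^{\beta-1}\|_{2^*}^2$ by $C\int a\,u^2\min(u,L)^{2(\beta-1)}$. The part with $|a|>M$ is absorbed into the left side by Hölder's inequality and the smallness of $\|a\chi_{\{|a|>M\}}\|_{n/2}$. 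The remaining part is bounded by $M\|u\|_{2\beta}^{2\beta}$. Letting $L\to\infty$, we get $u\in L^{2^*\beta}$ whenever $u\in L^{2\beta}$. Starting from $2\beta=2^*$ and iterating finitely many times yields $u\in L^q(\Omega)$ for every $q<\infty$.

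\emph{Step 3: bootstrap to $L^\infty$.} With $u\in L^q$ for all $q$, we have $u^{2^*_\mu}\in L^{r}$ for all $r$. Choosing $r>n/(n-\mu)$ gives $K\in L^\infty$, since the kernel is locally integrable and $\Omega$ is bounded. Then $a\le \|K\|_\infty u^{2^*_\mu-2}\in L^{p}$ for some $p>n/2$. A standard Moser iteration, or a Stampacchia truncation with test functions $(u-k)_+$, again discarding the nonnegative nonlocal contribution, gives $\|u\|_{L^\infty(\Omega)}\le C$. Since $u=0$ outside $\Omega$, this yields $u\in L^\infty(\mathbb{R}^n)$.

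\emph{Main obstacle.} The hardest part is Step 2. Because the potential $a$ lies exactly in $L^{n/2}$, no smallness comes for free, so the absorption must go through the splitting at level $M$. One also has to check that the truncated test function is admissible in $\X(\Omega)$ and that the nonlocal monotonicity inequality holds for it.
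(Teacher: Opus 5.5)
There is a genuine gap in Steps 1--2, and it sits exactly at the critical step you flag.

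First, the exponent bookkeeping is off. For $\mu<4$, $u^{2^*_\mu-2}$ lies in $L^{2^*/(2^*_\mu-2)}=L^{\frac{2n}{4-\mu}}$; the space $L^{\frac{2n}{n-\mu+2}}$ is the one for $u^{2^*_\mu-1}$. Also $\frac{\mu}{2n}+\frac{n-\mu+2}{2n}=\frac{n+2}{2n}$, not $\frac2n$. With the correct exponent, $\frac{\mu}{2n}+\frac{4-\mu}{2n}=\frac2n$, so $a=K\,u^{2^*_\mu-2}\in L^{n/2}(\Omega)$ and your Brezis--Kato/Moser/Stampacchia chain works. But this holds only for $\mu\le 4$.

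The theorem covers every $\mu\in(0,n)$. For $\mu\in(4,n)$, which is allowed once $n\ge5$ and relevant to Theorem~\ref{mainthm} where $n$ is large, you have $2^*_\mu-2=\frac{4-\mu}{n-2}<0$. Then $a$ contains a negative power of $u$: it is undefined on $\{u=0\}$, blows up where $u$ is small, and nothing places it in $L^{n/2}$. In fact it is not in $L^{n/2}$ once $\mu\ge 6-\frac4n$, because a posteriori $u\le C\operatorname{dist}(\cdot,\partial\Omega)$ while $K\ge c>0$ in $\Omega$ for $u\not\equiv0$.

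The obvious fallback fails too. Using $u^{2^*_\mu-1}\le 1+u$ leaves the coefficient $K$, which is only in $L^{2n/\mu}$, and $\frac{2n}{\mu}<\frac n2$ here. Bounding $\int K w^2$ by Hardy--Littlewood--Sobolev would need $w\in L^{\frac{4n}{2n-\mu}}$, an exponent above $2^*$ exactly when $\mu>4$. So the absorption in Step 2 has nothing to act on, and the first gain of integrability beyond $L^{2^*}$ is not established. Step 3 is fine once you write it with $f=K\,u^{2^*_\mu-1}\in L^p$, $p>\frac n2$, instead of with $a$.

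What is missing is an estimate that keeps the Choquard term as a double integral and exploits its bilinear structure. One way is a nonlocal Brezis--Kato lemma in the spirit of Moroz--Van Schaftingen:
write $u(y)^{2^*_\mu}=u(y)^{2^*_\mu-1}u(y)$, so that powers of the Moser test function appear in both variables;
split $u^{2^*_\mu-1}=h_1+h_2$ in both the $x$- and the $y$-factor, with $\|h_1\|_{L^{2n/(n-\mu+2)}}$ small and $h_2$ bounded;
apply HLS to the whole double integral.
This raises the integrability of $u$ until $K\in L^\infty$, after which your Step 3 applies.

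The paper takes a different route. It runs a De Giorgi iteration on the level truncations $v_k=(u/m-\mathcal Z_k)^+$ and estimates the double integral directly by HLS on level sets. This gives $\bar V_{k+1}\le \mathcal B^{k+1}\bigl(\bar V_k^{2^*_\mu}+\bar V_k^{2^*/2}\bigr)$, with both exponents larger than $1$ for every $\mu\in(0,n)$, since no negative power of $u$ ever enters. If you follow that route, note that the initial smallness of $\bar V_0$ cannot come from the normalization $u\mapsto u/m$ alone. The problem is scale invariant, and $u/m$ solves the equation with the right-hand side multiplied by $m^{2(2^*_\mu-1)}$, so that point needs its own argument.
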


\begin{Theorem}\label{thm1.5}
Let $u \in \mathcal{X}^{1,2}_0(\Omega)$ be a weak solution of \eqref{1}. 
Then $u \in W^{2,r}(\Omega)$ for every $r \in (1,\infty)$ if $s \in (0,1/2]$, 
and for every $r \in \left(0,\frac{n}{2s-1}\right)$ if $s \in (1/2,1)$. 
Consequently, $u \in C^{1,\beta}(\overline{\Omega})$ for all $\beta \in (0,1)$ when $s \in (0,1/2]$, 
and for all $\beta \in (0,2-2s)$ when $s \in (1/2,1)$. 
Moreover, if $\Omega$ is of class $C^{2,\alpha}$ with $\alpha \in (0,1)$, 
then $u \in C^{2,\beta}(\overline{\Omega})$ for every $\beta$ satisfying 
$0 < \beta < \min\{s,\alpha,n-\mu\}$.
\end{Theorem}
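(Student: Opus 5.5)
We would prove Theorem \ref{thm1.5} by a bootstrap. The idea is to move the fractional term and the Choquard term to the right-hand side and then apply classical $W^{2,r}$ and Schauder theory for the Laplacian on the $C^{1,1}$ (respectively $C^{2,\alpha}$) domain $\Omega$.

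\textbf{Step 1: the right-hand side.} By Theorem \ref{thm:7.1} we have $u\in L^\infty(\mathbb{R}^n)$, and $u=0$ outside $\Omega$. The Riesz potential
\[
K(x)=\int_\Omega \frac{u(y)^{2^*_\mu}}{|x-y|^\mu}\,dy
\]
is then bounded, since $|\cdot|^{-\mu}$ is locally integrable for $\mu<n$ and $\Omega$ is bounded. Hence $f:=K\,u^{2^*_\mu-1}\in L^\infty(\Omega)$. We also record that $K$ is Hölder continuous of order $\min\{1,n-\mu\}$ (with a logarithmic loss at the borderline), by the standard estimate on differences of the kernel. This is where the restriction $\beta<n-\mu$ will come from.

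\textbf{Step 2: the fractional term.} We treat $(-\Delta)^s u$ as a datum and consider $-\Delta u = f-(-\Delta)^s u$ in $\Omega$ with $u=0$ on $\partial\Omega$.

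\emph{Case $s\le 1/2$.} Since $u\in L^\infty$ is Lipschitz-type across $\partial\Omega$ once some gradient regularity is known, a bootstrap gives $(-\Delta)^su\in L^r(\Omega)$ for all $r$. Concretely, we start from $u\in C^{0,\gamma}(\mathbb{R}^n)$. This follows from $L^\infty$ data and interior/boundary Hölder estimates for the mixed operator, or from first applying $W^{2,r}$ with the distributional bound $(-\Delta)^su\in H^{-s}$ and iterating. Then $(-\Delta)^su\in L^r$ whenever $u\in C^{2s+\epsilon}$ near interior points. At the boundary we use that $u$ vanishes outside $\Omega$ and is $C^{1}$ up to $\partial\Omega$, so that $|(-\Delta)^s u(x)|\lesssim 1+ d(x)^{1-2s}$. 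This is bounded for $s\le1/2$, with a log at $s=1/2$, which is still in every $L^r$.

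\emph{Case $s>1/2$.} The same boundary computation gives only $|(-\Delta)^su|\lesssim d(x)^{1-2s}$. This function lies in $L^r(\Omega)$ exactly when $r(2s-1)<1$; one should check that this matches the stated range. We then take whatever integrability the kernel estimate allows.

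\textbf{Step 3: the bootstrap.} Calderón–Zygmund theory gives $u\in W^{2,r}$. We iterate: $W^{2,r}\Rightarrow C^{1,\gamma}(\overline\Omega)$ by Morrey, which improves the bound on $(-\Delta)^su$ and hence $r$. This yields $u\in C^{1,\beta}$ for all $\beta<1$ when $s\le1/2$, and for $\beta<2-2s$ when $s>1/2$. The latter is because $d^{1-2s}\in L^r$ with $r$ near $1/(2s-1)$, giving $\beta=1-n/r$ in the appropriate scaling, i.e.\ the $2-2s$ bound.

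\textbf{Step 4: $C^{2,\beta}$ for $C^{2,\alpha}$ domains.} We show that $f-(-\Delta)^su\in C^{0,\beta}(\overline\Omega)$ and apply Schauder estimates. The Hölder continuity of $f$ of order $<\min\{1,n-\mu\}$ comes from Step 1 together with $u\in C^{1}$. For $(-\Delta)^su$ we use $u\in C^{1,\gamma}(\overline\Omega)$, extended by zero. The fractional Laplacian of such a function is $C^{0,\beta}$ in the interior for $\beta<1+\gamma-2s$, and near the boundary one uses the structure $u\approx d$. Iterating with the improved $C^{2,\beta}$ regularity gives a gain of $s$ at best, which explains the constraint $\beta<s$.

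\textbf{Main obstacle.} The hardest step is the boundary behaviour of $(-\Delta)^s u$ for a function that is $C^{1,\gamma}$ inside and zero outside, both its $L^r$ integrability and its Hölder continuity up to $\partial\Omega$. We would handle it by splitting the integral into the regions $|x-y|<d(x)/2$ and the rest, and using the Lipschitz bound $|u(y)|\le C d(y)$.
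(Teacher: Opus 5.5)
\textbf{This proposal has genuine gaps.} The paper does no bootstrap by hand. It takes $f\in L^\infty(\Omega)$ from Theorem~\ref{thm:7.1} and quotes \cite[Theorem~1.4]{su2022regularity} for $s\le 1/2$ and \cite[Theorem~2.7]{biagi2023faber} for $s>1/2$. It then uses \cite[Proposition~1.4(iii)]{ros2014extremal} for the Riesz potential (this is where the $s$ in $\min\{s,\alpha,n-\mu\}$ comes from) and \cite[Theorem~2.8]{biagi2023faber} for the Schauder step. Treating $(-\Delta)^s u$ as data for $-\Delta$ is a legitimate by-hand substitute. For $s\le 1/2$ it works once you start in the Sobolev scale rather than presupposing $u\in C^1(\overline\Omega)$: the zero extension of $u\in W^{1,p}_0(\Omega)$ lies in $W^{1,p}(\mathbb{R}^n)$, $(-\Delta)^s$ maps $W^{1,p}(\mathbb{R}^n)$ into $L^p(\mathbb{R}^n)$ when $2s\le 1$ and $1<p<\infty$, and Calder\'on--Zygmund plus Sobolev embedding iterate from $p=2$.

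\textbf{Case $s\in(1/2,1)$.} Here the argument fails exactly where you wrote ``one should check''. Since $d^{-\theta}\in L^1(\Omega)$ iff $\theta<1$, whatever $n$ is, the bound $|(-\Delta)^s u|\lesssim d^{1-2s}$ puts $(-\Delta)^s u$ in $L^r(\Omega)$ only for $r<1/(2s-1)$, not $r<n/(2s-1)$. Morrey's embedding then gives at most $C^{1,1-n/r}$ with $1-n/r<1-n(2s-1)<2-2s$, and no $C^1$ bound at all if $1/(2s-1)\le n$. So the line ``$\beta=1-n/r$ in the appropriate scaling, i.e.\ the $2-2s$ bound'' is false, and Step~3 does not prove the $C^{1,\beta}$ claim. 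The exponent $2-2s$ is not a Sobolev-embedding exponent. It comes from a direct gradient-H\"older estimate (barriers or Green-function bounds) for $-\Delta u=g$ with $|g|\lesssim 1+d^{1-2s}$; in one variable, $u''\sim d^{1-2s}$ gives $u'\in C^{0,2-2s}$. You would also need to feed that estimate a bound $|u|\le Cd$ obtained without assuming it.

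Your computation in fact shows more. Wherever $\partial_\nu u\neq0$ (as a Hopf-type lemma gives for positive solutions), $(-\Delta)^s u$ is comparable to $d^{1-2s}$, with the nonzero constant of $(-\Delta)^s(x_n)_+$. Hence $\Delta u=(-\Delta)^s u-f\notin L^r(\Omega)$ for $r\ge 1/(2s-1)$. For $n\ge3$ the stated range $r<n/(2s-1)$ therefore cannot hold for such $u$; in the paper it rests solely on the citation.

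\textbf{Step~4.} The same boundary behaviour defeats this step. The splitting you describe as the main obstacle would exhibit the obstruction rather than remove it. Where $\partial_\nu u\neq0$, $(-\Delta)^s u\approx c_s\,\partial_\nu u\,d^{1-2s}$.
\begin{itemize}
\item For $s>1/2$ this blows up, and at $s=1/2$ it is replaced by a logarithmic singularity. So for $s\ge 1/2$, $(-\Delta)^s u$ lies in no $C^{0,\beta}(\overline\Omega)$, $\Delta u$ is unbounded, and $u\notin C^2(\overline\Omega)$.
\item For $s<1/2$ it is H\"older of order no better than $1-2s$ at $\partial\Omega$. So the claim $(-\Delta)^s u\in C^{0,\beta}(\overline\Omega)$ that Step~4 needs fails for $\beta>1-2s$, which is below $s$ once $s>1/3$.
\item Because $u$ vanishes linearly on $\partial\Omega$, $u^{2^*_\mu-1}$ is only $C^{0,2^*_\mu-1}$ there when $\mu>4$ (so that $2^*_\mu<2$). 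Hence $f$ is not H\"older of every order below $\min\{1,n-\mu\}$ up to the boundary.
\end{itemize}

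Your remark about a ``gain of $s$ at best'' is not an argument for $\beta<\min\{s,\alpha,n-\mu\}$. The paper does not provide one either: it invokes \cite[Theorem~2.8]{biagi2023faber}, and its own text concludes only that $u\in C^{2,\beta}(\overline\Omega)$ for \emph{some} $\beta\le\min\{b,\alpha\}$. What your method can honestly deliver is interior $C^{2,\beta}_{\mathrm{loc}}$ regularity. Up to the boundary it gives $C^{2,\beta}(\overline\Omega)$ only for $s<1/2$, with $\beta$ limited by $1-2s$ and $2^*_\mu-1$ as well as by $\alpha$ and $n-\mu$; this uses that $(-\Delta)^s$ maps bounded Lipschitz functions into $C^{0,1-2s}$ when $2s<1$.
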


\textbf{Plan of the paper:} The rest of the paper is organized as follows. In Section 2, we introduce the functional framework, notation, and preliminary results essential for our analysis. In particular, we establish the weak maximum principle. In Section 3, we established certain regularity results for weak solutions of problem~\eqref{1}, 
showing in particular that \(u \in L^{\infty}(\Omega)\) and 
\(u \in C^{2,\beta}(\overline{\Omega})\) for every \(\beta\) such that 
\(0 < \beta < \min\{s,\alpha,n-\mu\}\). Also, we proved a strong maximum principle(Proposition \eqref{sm}). In Section 4, we establishes the Palais-Smale decomposition and the global compactness result for Palais-Smale sequences related to the mixed local-nonlocal operator with Choquard nonlinearity. Furthermore, in Section 5, 
 we prove existence results for positive high-energy solutions in annular-type domains using variational methods and topological arguments. 
 
 \textbf{Notations.} Throughout the paper, we shall use the following notations.  
\begin{itemize}
\item Positive constants will be denoted by $C, C_1, C_2, \dots$. Their values may vary from line to line, and also denote some constants by $D.$  
    % \item The standard norm on $L^p(\mb R^{n})$ is denoted by $|\cdot|_p$.  
    \item For $x \in \mb R^{n}$ and $r > 0$, we denote by $B_r(x)$ the ball of radius $r$ centered at $x$.  In particular, $B_R := B(0,R)$. 
    \item For any function $u$, we denote its positive and negative parts by 
    \(
        u_+ := \max\{u, 0\}, \quad u_- := \max\{-u, 0\}.
    \)
    \item Let $X= \X(\Omega)$, $X'$ denotes the topological dual of the fractional  Sobolev space $X$. It is endowed with the dual operator norm induced by $\X(\Om)$. 
    \item Throughout the paper, \(\langle \cdot , \cdot \rangle\) denotes the duality pairing between 
\(X'\) and \(X\).
\end{itemize}

 \section{Preliminaries}\label{P}

In this section, we introduce the functional framework needed to study problem \eqref{1}, together with some notations and preliminary results that will be used throughout the paper. For a more detailed exposition of these settings, we refer to \cite{biagi2025brezis} and the references therein.  

Let $s \in (0,1)$. For a measurable function $u:\mathbb{R}^{n} \to \mathbb{R}$, we define the \emph{Gagliardo seminorm} of order $s$ by  
\begin{equation*}
    [u]_s := \left( \int_{\mathbb{R}^{2n}} \frac{|u(x)-u(y)|^2}{|x-y|^{n+2s}} \, dx \, dy \right)^{\!\frac12}.
\end{equation*}

Let $\Omega \subset \mathbb{R}^{n}$ be a non-empty open set (not necessarily bounded). We define the space $\mathcal{X}^{1,2}_0(\Omega)$ as the completion of $C_0^\infty(\Omega)$ with respect to the following norm  
\begin{align*}
    \eta(u) := \left( \|\nabla u\|^2_{L^2(\mathbb{R}^{n})} + [u]_s^2 \right)^{\frac12}, 
    \qquad u \in C_0^\infty(\Omega),
\end{align*}
and 
$$\langle u, \varphi \rangle_{\eta} 
= \int_{\mathbb{R}^n} \nabla u \cdot \nabla \varphi \, \mathrm{d}x 
+ \int_{\mathbb{R}^{2n}} \frac{(u(x) - u(y))(\varphi(x) - \varphi(y))}{|x-y|^{n+2s}} \, \mathrm{d}x \, \mathrm{d}y~~\forall~~\varphi\in C_0^\infty(\Omega).$$
%	\begin{Remark}\label{remarkdef}
%Although $u \in C^{\infty}_0(\Omega)$ (meaning it vanishes identically outside $\Omega$), the definition of the norm $\eta(\cdot)$ incorporates the $L^2$-norm of $\nabla u$ over the entire space $\mathbb{R}^n$. This emphasizes that the functions in $\mathcal{X}^{1,2}_0(\Omega)$ are naturally considered as elements defined on the whole space, not only on $\Omega$.  
%The advantage of adopting such a global functional setting is that smooth compactly supported functions in $\Omega$ can approximate elements of $\mathcal{X}^{1,2}_0(\Omega)$ in $\mathbb{R}^n$ with respect to $\eta(\cdot)$.  

%In particular, when $\Omega \neq \mathbb{R}^n$, this global characterization ensures that functions in $\mathcal{X}^{1,2}_0(\Omega)$ automatically satisfy the boundary condition of problem \eqref{1}, namely
%\begin{equation}\label{e2.2}
%u \equiv 0 \quad \text{a.e. in } \mathbb{R}^n \setminus \Omega, \quad \forall~ u \in %\mathcal{X}^{1,2}_0(\Omega).
%\end{equation}

%To justify \eqref{e2.2}, we distinguish the following two cases.
Below are two characterizations of the set $\mathcal{X}^{1,2}_0(\Omega)$ based on features of $\Omega$.
\begin{enumerate}
    \item If $\Omega$ is bounded, then the norm $\eta(\cdot)$ and the $H^1$ norm in $\mathbb{R}^n$ are equivalent on $C^{\infty}_0(\Omega)$. Consequently,
    \begin{align*}
        \mathcal{X}^{1,2}_0(\Omega)  
        &= \overline{C^{\infty}_0(\Omega)}^{\| \cdot \|_{H^1(\mathbb{R}^n)}} \\
        &= \big\{ u \in H^1(\mathbb{R}^n) : \; u|_{\Omega} \in H^1_0(\Omega), \; u \equiv 0 \ \text{a.e. in } \mathbb{R}^n \setminus \Omega \big\}.
    \end{align*}
    
    \item If $\Omega$ is unbounded, then
    \[
    \mathcal{X}^{1,2}_0(\Omega) 
    = \big\{ u \in L^{2^*}(\mathbb{R}^n) : \; u \equiv 0 \ \text{in } \mathbb{R}^n \setminus \Omega,\; \nabla u \in L^2(\mathbb{R}^n),\; [u]_s^2 < \infty \big\}.
    \]
In particular, for $\Omega = \mathbb{R}^n$, one obtains
    \[
    \mathcal{X}^{1,2}_0(\mathbb{R}^n) 
    = \big\{ u \in L^{2^*}(\mathbb{R}^n) : \; \nabla u \in L^2(\mathbb{R}^n),\; [u]_s^2 < \infty \big\}.
    \]
\end{enumerate}
We also recall the following space
\[
D^{1,2}(\mathbb{R}^n)
= \overline{C_c^\infty(\mathbb{R}^n)}^{\;\|\nabla u\|_{L^2(\mathbb{R}^n)}}
= \left\{\, u \in L^{2^*}(\mathbb{R}^n) : \nabla u \in L^2(\mathbb{R}^n) \,\right\},
\]
where \( 2^* = \frac{2n}{n-2} \) is the critical Sobolev exponent and the associated norm is
\[
\|u\|_{D^{1,2}(\mathbb{R}^n)} = 
\left( \int_{\mathbb{R}^n} |\nabla u|^2 \, dx \right)^{1/2}.
\]
%\end{Remark}
In order to approach the original problem \eqref{1}, we begin by examining the following auxiliary problem 
\begin{equation}\label{2}
\begin{cases}
\displaystyle \mathcal{L}u(x)
= \left(\int_{\Omega} \frac{|u_{+}(y)|^{2_\mu^\ast}}{|x-y|^{\mu}}\,dy\right)
|u_{+}(x)|^{2_\mu^\ast-1} & \text{in }\Omega,\\[6pt]
% u>0 & \text{in }\Omega,\\[4pt]
~~\quad ~u=~~0 & \text{in }\mathbb{R}^{n}\setminus\Omega,
\end{cases}\tag{$P^*$}
\end{equation}}
where $u_+ = \max\{u,0\}$. We shall first study this problem \eqref{2}, which will later allow us to address the original problem~\eqref{1}, see Remark \ref{r2p}.

{ {In order to give a variational formulation to problem \eqref{1} and \eqref{2}, we introduce the notion of weak solutions.
\begin{Definition}
A function $u \in \mathcal{X}^{1,2}_0(\Omega)$ is called a weak solution of problem \eqref{1} if $u>0$ a.e. in $\Omega$ and for all $\varphi \in \mathcal{X}^{1,2}_0(\Omega)$, the following identity holds
\begin{align}
\int_{\mathbb{R}^{n}} \nabla u \cdot \nabla \varphi \, dx 
+ \int_{\mathbb{R}^{2n} } 
\frac{(u(x)-u(y))(\varphi(x)-\varphi(y))}{|x-y|^{n+2s}} \, dx dy
= \int_{\Omega}\int_{\Omega}
\frac{u(y)^{2_\mu^\ast}\,u(x)^{2_\mu^\ast-1}\varphi(x)}{|x-y|^\mu}\,dy\,dx.
\end{align}
\end{Definition}
\begin{Definition}\label{d4.1}
A function $u \in \mathcal{X}^{1,2}_0(\Omega)$ is called a weak solution of problem \eqref{2} if  for all $\varphi \in \mathcal{X}^{1,2}_0(\Omega)$, the following identity holds
\begin{align}\label{4.1}
\int_{\mathbb{R}^{n}} \nabla u \cdot \nabla \varphi \, dx 
+ \int_{\mathbb{R}^{2n} } 
\frac{(u(x)-u(y))(\varphi(x)-\varphi(y))}{|x-y|^{n+2s}} \, dx dy
= \int_{\Omega} \int_{\Omega}
\frac{|u_+(y)|^{2_\mu^\ast} |u_+(x)|^{2_\mu^\ast - 1}  \varphi(x)}{|x-y|^\mu} \, dx dy.
\end{align}
\end{Definition}}

 We also observe that Definition \ref{d4.1} is well-posed, in the sense that all the integrals in \eqref{4.1} are finite. Indeed, if $u, \varphi \in \mathcal{X}^{1,2}_0(\Omega)$, we have,
		\begin{align*}
		\left|\int_{ \mathbb{R}^{n}}  \na u\cdot \na \varphi \,dx + \int_{R^{2n}}\right. &\left.\frac{(u(x)-u(y))(\varphi(x)-\varphi(y))}{|x-y|^{n+2s}}dxdy\right|\\
			\leq& \|\nabla u\|_{L^2(\mathbb{R}^{n})}\|\nabla\varphi\|_{L^2(\mathbb{R}^{n})}+[u]_s[\varphi]_s \leq 2\mc \eta(u)\mc \eta(\varphi )<+\infty.
		\end{align*}
		Moreover, since $\mathcal{X}^{1,2}_0(\Omega) \hookrightarrow L^{2^*}(\mb R^n)$, using classical Hardy-Littlewood-Sobolev and H\"{o}lder's inequality(and taking into account that $u,v=0$ a.e. in $\mb R^n \setminus \Om$) we also have
		\begin{align*}
			\I{\Om}\I{\Om}\frac{|u_+(y)|^{2_\mu^*}|u_+(x)|^{2_\mu^*-1}\varphi(x)}{|x-y|^\mu}dxdy 
			\leq C(n,\mu)\|u\|^2_{L^{2^*}(\Omega)}\|\varphi\|^2_{L^{2^*}(\Omega)}<+\infty. 
		\end{align*}
\begin{Remark}\label{r2p}
 The right-hand side of \eqref{2} is nonnegative for every weak solution. 
Thus, by the weak maximum principle(Proposition \ref{weakm}), any weak solution $u$ of \eqref{2} satisfies $u\ge0$ a.e. in $\Omega$, 
and hence $u=u_{+}$.   
%Since $u = u_{+}$ for every weak solution, the nonlinearity in \eqref{2} simplifies to  
%\[
%|u_{+}(x)|^{2_\mu^\ast-1}=u(x)^{2_\mu^\ast-1},
%\qquad 
%\]
 Moreover, by applying the maximum principle(Proposition~\ref{sm}) and thanks to the regularity result {(Theorem~\ref{thm1.5})}, one can deduce that $u$ is a positive weak solution of \eqref{1}.

\end{Remark}

        The corresponding energy functional $I: \mathcal{X}^{1,2}_0(\Omega) \to \mathbb{R}$, associated to the problem \eqref{2} is given by
\begin{equation}\label{eqI}
I(u) = \frac{1}{2} \int_{\mathbb{R}^{n}} |\nabla u|^2\,dx+\frac{1}{2}\int_{\mathbb{R}^{2n}} \frac{|u(x) - u(y)|^2}{|x-y|^{n+2s}} dxdy  - \frac{1}{2 \cdot 2^{*}_{\mu}} \int_{\Omega} \int_{\Omega} \frac{|u_+(x)|^{2^{*}_{\mu}} |u_+(y)|^{2^{*}_{\mu}}}{|x-y|^{\mu}} dxdy,
    \end{equation}
and the limiting functional $J: D^{1,2}(\mathbb{R}^{n})\to \mathbb R$ associated with the problem \eqref{2} at infinity is defined by
\begin{equation}\label{Ener-Inf}
J(u) = I_{\infty}(u)=  \frac{1}{2}\int_{\mathbb{R}^{n}} |\nabla u|^2 \,{\rm d}x  -\frac{1}{2\cdot2_\mu^*}\I{\mathbb{R}^{n}}\I{\mathbb{R}^{n}}\frac{|u_+(x)|^{2_\mu^*}|u_+(y)|^{2_\mu^*}}{|x-y|^\mu}dxdy.   
\end{equation}
Using the classical Hardy-Littlewood-Sobolev inequality
(see Proposition~\ref{HLS}), we observe that
\[
I \in C^1\bigl(\mathcal{X}^{1,2}_0(\Omega), \mathbb{R}\bigr),
\]
and for any $u,\varphi \in \mathcal{X}^{1,2}_0(\Omega)$ one has
\begin{align*}
	\ld I^\prime(u), \varphi \rd =& \int_{\mathbb{R}^n} \na u\cdot\na \varphi \,dx + \int_{\mathbb{R}^{2n}}\frac{(u(x)-u(y))(\varphi(x)-\varphi(y))}{|x-y|^{n+2s}}dxdy\\
	&-\I{\Om}\I{\Om}\frac{|u_+(y)|^{2_\mu^*}|u_+(x)|^{2_\mu^*-1}\varphi(x)}{|x-y|^\mu}dxdy.
\end{align*} 
Note that if $u\in \X (\Omega)$ is a weak solution of~\eqref{2}, then $u$ is a critical point of $I$ and vice versa. One of the standard tools in variational methods is to find a minimizer of the corresponding energy functional as the limit of minimizing sequences.

The following definition gives the desired compactness condition for the extremizing sequences associated to the energy functional.

\begin{Definition}\label{PS-def}
    A sequence $\{ u_k \} \subset \X(\Omega)$ is said to be a Palais-Smale $(PS)$ sequence for $I$ at level $\theta$, if $I(u_k) \to \theta$ in $\mathbb{R}$ and $I'(u_k) \to 0$ in $(\X(\Omega))'$ as $k \to \infty$. The function $I$ is said to satisfy $(PS)$ condition at level $\theta$, if every $(PS)$ sequence for $I$ at level $\theta$ admits a convergent subsequence.
\end{Definition}
%      We first recall a useful auxiliary lemma from \cite{giovanni}. 
% \begin{Lemma}\label{LeoniIneq}
% Let $\Omega \subseteq \mathbb{R}^{n}$ be an open set, $0<s_1<s_2<1$. Then $\forall u \in H^{s_2}(\Omega)$, there exists a constant $c>0$ such that 
% \begin{equation*}
%     [u]_{s_1}\leq c \|u\|_{L^2(\Omega)}^{1-s_1/s_2}[u]_{s_2}^{s_1/s_2}.
% \end{equation*}
% \end{Lemma}

The next compactness result, whose proof can be found in [\cite{chakraborty2025global}, Lemma 2.2], shall be useful in the proof of our main global compactness Theorem \ref{PS_decomposition}.
    \begin{Lemma}\label{compact_embedding}
Let $\Omega \subset \mathbb{R}^n$ be a bounded and nonempty open set. Then, the embedding $\mathcal{X}_0^{1,2}(\Omega) \hookrightarrow H^s(\Omega)$ is compact.
\end{Lemma}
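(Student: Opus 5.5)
The plan is to prove the compact embedding $\mathcal{X}^{1,2}_0(\Omega)\hookrightarrow H^s(\Omega)$ by combining the Rellich--Kondrachov theorem with an interpolation inequality between $L^2$ and $H^1$. Take a bounded sequence $\{u_k\}\subset\mathcal{X}^{1,2}_0(\Omega)$, say $\eta(u_k)\le M$. Since $\Omega$ is bounded, characterization (1) of $\mathcal{X}^{1,2}_0(\Omega)$ above shows that each $u_k\in H^1(\mathbb{R}^n)$, vanishes a.e. outside $\Omega$, and satisfies $\|u_k\|_{H^1(\mathbb{R}^n)}\le C M$. Here the Poincaré inequality on the bounded set $\Omega$ controls $\|u_k\|_{L^2}$ by $\|\nabla u_k\|_{L^2}$.

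First I extract a subsequence converging weakly in $H^1(\mathbb{R}^n)$ to some $u$. Because every $u_k$ is supported in the fixed bounded set $\overline{\Omega}$, Rellich--Kondrachov (applied, for instance, on a large ball containing $\Omega$) gives $u_k\to u$ strongly in $L^2(\mathbb{R}^n)$, and $u=0$ a.e. outside $\Omega$.

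Next I control the Gagliardo seminorm of $w_k:=u_k-u$ through its Fourier representation:
\[
[w_k]_s^2 = c(n,s)\int_{\mathbb{R}^n}|\xi|^{2s}|\widehat{w_k}(\xi)|^2\,d\xi .
\]
Hölder's inequality with exponents $1/s$ and $1/(1-s)$, applied to $|\xi|^{2s}|\widehat w_k|^{2s}\cdot|\widehat w_k|^{2(1-s)}$, yields
\[
[w_k]_s^2\le c\,\|\nabla w_k\|_{L^2(\mathbb{R}^n)}^{2s}\,\|w_k\|_{L^2(\mathbb{R}^n)}^{2(1-s)}.
\]
The first factor is bounded by $(2CM)^{2s}$ and the second tends to $0$, so $[w_k]_s\to0$. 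Since the seminorm restricted to $\Omega\times\Omega$ is bounded by the full one, it follows that $u_k\to u$ in $H^s(\Omega)$, and in fact in $H^s(\mathbb{R}^n)$.

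No step is a real obstacle. The only points needing care are that the $L^2$ bound comes from Poincaré on the bounded domain, and that the interpolation estimate is carried out on $\mathbb{R}^n$, where the Fourier characterization of $[\cdot]_s$ holds, rather than on $\Omega$.
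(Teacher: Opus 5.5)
Your proof is correct. Rellich--Kondrachov on a ball containing $\Omega$ gives $u_k\to u$ in $L^2(\mathbb{R}^n)$, and the interpolation $[w]_s^2\le c\,\|\nabla w\|_{L^2(\mathbb{R}^n)}^{2s}\|w\|_{L^2(\mathbb{R}^n)}^{2(1-s)}$ upgrades this to convergence in $H^s$; since you work on $\mathbb{R}^n$ with functions vanishing outside $\Omega$, no boundary regularity is needed, matching the hypothesis of an arbitrary bounded open set.

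The paper gives no proof of its own and imports the result from \cite[Lemma~2.2]{chakraborty2025global}. Your route uses the same $L^2$--$\dot H^1$ interpolation of the Gagliardo seminorm that the paper itself invokes, via \cite[Lemma~4.3]{chakraborty2025global}, in the proof of Lemma~\ref{lem4.6}.
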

We use $S_{H,L,C}$ to denote the best constant associated to
 	\begin{align}\label{ebc}	S_{H,L,C}=\inf\limits_{u \in C_0^\infty(\mb R^n)\setminus \{0\}} \frac{\|\nabla u\|_{L^2(\mb R^n)}^2}{\left(\I{\mb R^n}\I{\mb R^n}\frac{|u_+(x)|^{2_\mu^\ast}|u_+(y)|^{2_\mu^\ast}}{|x-y|^\mu}dxdy\right)^\frac{1}{2_\mu^\ast}}.
 	\end{align}
    Since problems involving the Choquard nonlinearity are closely related to the Hardy-Littlewood-Sobolev inequality, we begin by recalling the inequality in the context of mixed operators. In \cite{anthal2023choquard}, the author considered an open set $\Omega \subset \mathbb{R}^n$(not necessarily bounded or connected) and functions $u: \mathbb{R}^n \to \mathbb{R}$ that vanish outside $\Omega$. For such functions, a mixed Hardy-Littlewood-Sobolev inequality of the form
\begin{equation}\label{eq:mixedHLS}
S_{H,L,M}(\Omega) \int_{\Omega} \int_{\Omega} \frac{|u_+(x)|^{2_\mu^\ast} |u_+(y)|^{2_\mu^\ast}}{|x-y|^\mu} \, dx \, dy 
\leq \|\nabla u\|_{L^2(\mathbb{R}^n)}^2 +  \int_{\mathbb{R}^{2n}} \frac{|u(x)-u(y)|^2}{|x-y|^{n+2s}} \, dx \, dy
\end{equation}
holds. Here, $S_{H,L,M}(\Omega)$ denotes the largest constant for which this inequality is valid. From \cite[Theorem 1.2]{anthal2023choquard}, one has
\[
S_{H,L,M}(\Omega) =S_{H,L,C}
\]
where $n \geq 3$ and constant $S_{H,L,C}$ is defined by \eqref{ebc}. 
For an open set  $\Omega \subset \mathbb{R}^n$, the best constant in the classical Hardy-Littlewood-Sobolev inequality is defined by
\begin{equation}\label{chc}
S_{H,L,C}(\Omega) = \inf_{u \in C_0^\infty(\Omega) \setminus \{0\}} 
\frac{\|\nabla u\|_{L^2(\Omega)}^2}
{\left( \int_{\Omega} \int_{ \Omega} \frac{|u_+(x)|^{2_\mu^\ast} |u_+(y)|^{2_\mu^\ast}}{|x-y|^\mu} \, dx \, dy \right)^{\frac{1}{2_\mu^\ast}}}.
\end{equation}
% Associated with this inequality, one can define the following Rayleigh-type quotient (taking $C(n,s) = 2$)
% \begin{equation}\label{eq:rayleigh}
% S(u;\Omega) := \frac{\|\nabla u\|_{L^2(\Omega)}^2 + [u]_s^2}{\int_{\Omega}\int_{ \Omega} \frac{|u(x)|^{2_\mu^\ast} |u(y)|^{2_\mu^\ast}}{|x-y|^\mu} \, dx \, dy}, \quad u \in \mathcal{C}_0^\infty(\Omega),
% \end{equation}
% where $[u]_s$ denotes the Gagliardo seminorm
% \[
% [u]_s^2 := \int_{\mathbb{R}^{2n} } \frac{|u(x)-u(y)|^2}{|x-y|^{n+2s}} \, dx \, dy,
% \]
% and $\langle \cdot, \cdot \rangle_s$ represents the inner product associated with this seminorm.

In \cite{anthal2023choquard}, the authors studied the Choquard problem for mixed local-nonlocal operators and, among other results, established the following theorem.

\begin{Theorem}\label{thm:best_constant}
For $s \in (0,1)$ and any open set $\Omega \subseteq \mathbb{R}^n$, the best Sobolev constant $\mathcal{S}_{H,L,M}(\Omega)$ in the inequality \eqref{eq:mixedHLS} coincides with $S_{H,L,C}$, the optimal constant in the Hardy–Littlewood–Sobolev inequality defined in \eqref{ebc}. Moreover, $\mathcal{S}_{H,L,M}(\Omega)$ is never attained.
\end{Theorem}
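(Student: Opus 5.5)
The argument proves two inequalities between $\mathcal S_{H,L,M}(\Omega)$ and $S_{H,L,C}$, and then shows non-attainment by contradiction.

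\emph{Lower bound.} For $u\in C_0^\infty(\Omega)\setminus\{0\}$ (extended by zero), we have $[u]_s^2\ge 0$. The definition \eqref{ebc} of $S_{H,L,C}$ as an infimum over $C_0^\infty(\mathbb R^n)$ then gives
\[
\|\nabla u\|_{L^2(\mathbb R^n)}^2+[u]_s^2\ \ge\ S_{H,L,C}\left(\int_{\Omega}\int_{\Omega}\frac{|u_+(x)|^{2_\mu^\ast}|u_+(y)|^{2_\mu^\ast}}{|x-y|^\mu}\,dx\,dy\right)^{\frac{1}{2_\mu^\ast}}.
\]
By density this extends to $\mathcal X^{1,2}_0(\Omega)$. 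Hence $\mathcal S_{H,L,M}(\Omega)\ge S_{H,L,C}$, reading the mixed inequality \eqref{eq:mixedHLS} with the $1/2_\mu^\ast$ power on the double integral, as in \eqref{ebc}.

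\emph{Upper bound.} I would use concentration, since the Dirichlet part is scale invariant while the Gagliardo part is subcritical. Fix $x_0\in\Omega$ and $r>0$ with $B_r(x_0)\subset\Omega$. Given $\varepsilon>0$, choose $v\in C_0^\infty(\mathbb R^n)$ whose quotient in \eqref{ebc} is below $S_{H,L,C}+\varepsilon$. By translation invariance, and after a dilation (under which the quotient in \eqref{ebc} is invariant), we may take $\operatorname{supp}v\subset B_r(x_0)$ with $x_0$ as the center. Set $v_t(x)=t^{\frac{n-2}{2}}v(x_0+t(x-x_0))$ for $t\ge1$, so that $\operatorname{supp}v_t\subset B_{r/t}(x_0)\subset\Omega$. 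Then $\|\nabla v_t\|_2$ and the Choquard double integral are unchanged; one checks that the Choquard term scales as $t^{(n-2)2_\mu^\ast-2n+\mu}=t^0$. Meanwhile $[v_t]_s^2=t^{2s-2}[v]_s^2\to0$, exactly the scaling identity recalled in the introduction. Letting $t\to\infty$ and then $\varepsilon\to0$ gives $\mathcal S_{H,L,M}(\Omega)\le S_{H,L,C}$.

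\emph{Non-attainment.} Suppose $u\in\mathcal X^{1,2}_0(\Omega)$, $u\neq0$, attains $\mathcal S_{H,L,M}(\Omega)=S_{H,L,C}$. Two preliminary observations are needed. First, the Choquard term must be positive, otherwise the quotient is infinite; so $u_+\not\equiv0$. Second, $u$ lies in $D^{1,2}(\mathbb R^n)$, whose quotient also has infimum $S_{H,L,C}$ by density. Then
\[
S_{H,L,C}\ \le\ \frac{\|\nabla u\|_2^2}{(\cdots)^{1/2_\mu^\ast}}\ \le\ \frac{\|\nabla u\|_2^2+[u]_s^2}{(\cdots)^{1/2_\mu^\ast}}\ =\ S_{H,L,C}.
\]
This forces $[u]_s=0$, i.e.\ $u$ is constant a.e.\ on $\mathbb R^n$. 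Since $u\in L^{2^*}$, it follows that $u\equiv0$, a contradiction.

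No step is a real obstacle. The only points needing care are the exponent bookkeeping in the scaling and the use of density to extend \eqref{ebc} from $C_0^\infty(\mathbb R^n)$ to $D^{1,2}(\mathbb R^n)$. Because $[u]_s=0$ already forces $u\equiv0$, the argument never needs the classification of extremals in Proposition \ref{HLS}, and in particular never needs the fact that they have noncompact support.
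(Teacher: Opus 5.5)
Your proof is correct. The paper gives no argument of its own for this statement; it imports it from \cite{anthal2023choquard}, which extends the Sobolev-case result of \cite{biagi2025brezis}, and your three steps are the standard argument for such results: drop $[u]_s^2$ to get $\mathcal S_{H,L,M}(\Omega)\ge S_{H,L,C}$; concentrate near-optimizers at a point of $\Omega$ via $[v_t]_s^2=t^{2s-2}[v]_s^2\to0$ (the identity the introduction recalls) for the reverse inequality; and force $[u]_s=0$, hence $u\equiv0$, in the equality case. You were also right to read \eqref{eq:mixedHLS} with the power $1/2_\mu^\ast$ on the double integral, since as printed that inequality is not scale-homogeneous and its best constant would be $0$.
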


We briefly recall some fundamental properties of the best constant $S_{H,L,C}$. 
For detailed proofs and further discussions, we refer the reader to \cite{anthal2023choquard}. 

\begin{Proposition}\label{r3.1}
Let $n \ge 3$ and $\Omega \subset \mathbb{R}^n$ be an open set. Then
\begin{enumerate}
    \item It holds that
    \[
    S_{H,L,C} = S_{H,L,C}(\Omega).
    \]
   % where the definition of $S_{H,L,C}(\mathbb{R}^n)$ is given in~\eqref{ebc}.

    \item The best constant $S_{H,L,C}(\Omega)$ is not attained unless $\Omega = \mathbb{R}^n$.

    \item When $\Omega = \mathbb{R}^n$, the constant $S_{H,L,C}$ is achieved by the family of functions
    \[
    \mathcal{A} = \left\{ 
    V_{t,x_0}(x) = t^{\frac{2-n}{2}} \, \mathcal{U}\!\left( \frac{x - x_0}{t} \right) 
    : t > 0,~ x_0 \in \mathbb{R}^n 
    \right\},
    \]
    where 
    \begin{equation}\label{U}
    \mathcal{U}(y) := c \left( 1 + |y|^2 \right)^{\frac{2-n}{2}}.
    \end{equation}
\end{enumerate}
\end{Proposition}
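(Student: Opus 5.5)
The statement to prove is Proposition \ref{r3.1}. I would prove the three items in order, relying on the classical theory of the Choquard–Sobolev quotient in $D^{1,2}(\mathbb{R}^n)$.

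For item 1, I would first show $S_{H,L,C}(\Omega)\ge S_{H,L,C}$. This is immediate: every $u\in C_0^\infty(\Omega)$, extended by zero, belongs to $C_0^\infty(\mathbb{R}^n)$. Its gradient integral and its double integral over $\Omega\times\Omega$ coincide with those over $\mathbb{R}^n$, so the infimum over the smaller class is larger.

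For the reverse inequality I would use dilation and translation invariance. The quotient is invariant under $u\mapsto u_{t,x_0}(x)=t^{\frac{2-n}{2}}u\bigl(\frac{x-x_0}{t}\bigr)$. This can be checked directly: $\|\nabla u_{t,x_0}\|_2^2=\|\nabla u\|_2^2$, and the double integral scales by $t^{-(n-2)2_\mu^\ast+2n-\mu}=1$, precisely because $2_\mu^\ast=(2n-\mu)/(n-2)$.

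Now take any $u\in C_0^\infty(\mathbb{R}^n)$ with $\operatorname{supp}u\subset B_R(0)$, and pick $x_0\in\Omega$ and $r>0$ with $B_r(x_0)\subset\Omega$. With $t=r/R$, the function $u_{t,x_0}$ lies in $C_0^\infty(\Omega)$ and has the same quotient. Taking the infimum gives $S_{H,L,C}(\Omega)\le S_{H,L,C}$.

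For item 3, I would reduce to the Hardy–Littlewood–Sobolev inequality (Proposition \ref{HLS}) combined with the sharp Sobolev inequality. Taking $g=h=|u_+|^{2_\mu^\ast}$ with $t=r=\frac{2n}{2n-\mu}$ gives
\[
\int\!\!\int\frac{|u_+(x)|^{2_\mu^\ast}|u_+(y)|^{2_\mu^\ast}}{|x-y|^\mu}\,dx\,dy\le C(n,\mu)\|u\|_{L^{2^*}}^{2\cdot 2_\mu^\ast}.
\]
Together with $\|\nabla u\|_2^2\ge S\|u\|_{2^*}^2$, this yields $S_{H,L,C}\ge S\,C(n,\mu)^{-1/2_\mu^\ast}$.

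Equality is attained by the Aubin–Talenti bubble $\mathcal U$ of \eqref{U}. First, $\mathcal U$ attains the sharp Sobolev constant. Second, $\mathcal U^{2_\mu^\ast}=c'(1+|y|^2)^{-\frac{2n-\mu}{2}}$ is exactly the HLS extremal profile of Proposition \ref{HLS}. Hence both inequalities are equalities, and $S_{H,L,C}=S\,C(n,\mu)^{-1/2_\mu^\ast}$ is attained. Formally $\mathcal U$ is not in $C_0^\infty$, but it lies in $D^{1,2}(\mathbb{R}^n)$ and can be approximated by cutoffs, so the infimum over $C_0^\infty$ equals the one over $D^{1,2}$. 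By the invariance above, every $V_{t,x_0}$ is also a minimizer. If one wants a characterization of all minimizers, equality in the Sobolev step forces $u$ to be a bubble by Talenti's classification.

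For item 2, suppose $\Omega\ne\mathbb{R}^n$ and $u\in D^{1,2}_0(\Omega)$ attains $S_{H,L,C}(\Omega)=S_{H,L,C}$. Replacing $u$ by $|u|$ does not change the gradient term and does not decrease the double integral, so I may assume $u\ge 0$. The extension by zero is then a minimizer on $\mathbb{R}^n$, and by item 3 it must equal some $V_{t,x_0}$, up to a multiplicative constant. But $V_{t,x_0}$ is strictly positive everywhere, while $u$ vanishes on $\mathbb{R}^n\setminus\Omega$. Since $\Omega\neq \mathbb{R}^n$ is open with nonempty complement, I need this complement to have positive measure. If it does not, a capacity argument or the Euler–Lagrange equation together with the strong maximum principle gives the contradiction instead.

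The main obstacle is item 3's classification of extremals. I would simply invoke Lieb's theorem (Proposition \ref{HLS}) together with Talenti–Aubin rather than reprove them.
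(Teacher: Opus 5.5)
Items 1 and 3 are correct and follow the standard route. You use invariance of the quotient under $u\mapsto t^{\frac{2-n}{2}}u(\frac{x-x_0}{t})$, then HLS with $g=h=|u_+|^{2_\mu^\ast}$ combined with sharp Sobolev, both saturated by the bubble. This is also what underlies the statement in the paper, which gives no proof of its own and defers to \cite{anthal2023choquard}. Your item 2 argument is complete whenever $|\mathbb{R}^n\setminus\Omega|>0$, in particular for the bounded domains used everywhere else in the paper. The zero extension is a minimizer on $\mathbb{R}^n$. Equality in the Sobolev step together with $\|u_+\|_{2^*}=\|u\|_{2^*}$ forces $u=cV_{t,x_0}$ with $c>0$, and such a function cannot vanish on a set of positive measure.

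The genuine problem is your fallback sentence, which asserts that the null-complement case can always be closed. It cannot, because item 2 is false for arbitrary open sets. Here attainment is understood in $D^{1,2}_0(\Omega)$, as you do; in $C_0^\infty$ nothing is attained, not even for $\mathbb{R}^n$. Take $\Omega=\mathbb{R}^n\setminus\{0\}$ with $n\ge 3$. Let $\psi_\varepsilon(x)=\eta(|x|/\varepsilon)$ with $\eta$ smooth, $\eta=0$ on $[0,1]$ and $\eta=1$ on $[2,\infty)$. Then $\|\nabla\psi_\varepsilon\|_2^2=\varepsilon^{n-2}\|\nabla\psi_1\|_2^2\to 0$, so $\varphi\psi_\varepsilon\to\varphi$ in $D^{1,2}$ for every $\varphi\in C_c^\infty(\mathbb{R}^n)$. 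Hence $D^{1,2}_0(\Omega)=D^{1,2}(\mathbb{R}^n)$, and every $V_{t,x_0}$ attains $S_{H,L,C}(\Omega)$.

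The Euler--Lagrange plus strong maximum principle route only reproduces $u>0$, which you already have. The contradiction must come from $u$ vanishing on $\mathbb{R}^n\setminus\Omega$ in a sense that detects null sets, and that is exactly capacity. If $\mathrm{cap}(\mathbb{R}^n\setminus\Omega)>0$, the argument works as follows. The quasi-continuous representative of any $u\in D^{1,2}_0(\Omega)$ vanishes quasi-everywhere off $\Omega$, since a subsequence of approximating $C_c^\infty(\Omega)$ functions converges q.e. On the other hand, $cV_{t,x_0}$ is continuous, hence its own quasi-continuous representative, and positive everywhere. If $\mathrm{cap}(\mathbb{R}^n\setminus\Omega)=0$, the statement simply fails. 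So item 2 needs an explicit hypothesis: either $|\mathbb{R}^n\setminus\Omega|>0$, which covers the paper's setting, or $\mathrm{cap}(\mathbb{R}^n\setminus\Omega)>0$ together with the capacity argument above.
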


By $\mathcal{L}(u)\geq 0$ in $\Omega$, we mean that
\begin{equation}\label{dd5.1}
    \int_{\Omega} \nabla u.\nabla \varphi \,dx +\int_{\mathbb{R}^{2n}} \frac{(u(x)-u(y))(\varphi(x)-\varphi(y))}{|x-y|^{n+2s}} dxdy \geq 0,
\end{equation}
for every non negative $\varphi \in \mathcal{X}^{1,2}_0(\Omega)$.
With this, we first establish the following weak maximum principle.

\begin{Proposition}\label{weakm}
Let $u \in \mathcal{X}^{1,2}_0(\Omega)$ satisfying be $\mathcal{L}u \geq 0$ in $\Omega$, then $u \geq 0$ a.e. in $\Omega$.
\end{Proposition}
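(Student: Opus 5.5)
The plan is to test the weak inequality \eqref{dd5.1} with the negative part of $u$. Set $\varphi := u_- = \max\{-u,0\}$. The first task is to check that $u_- \in \mathcal{X}^{1,2}_0(\Omega)$ and that it is nonnegative.

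For the local part, truncation by Lipschitz maps preserves $H^1(\mathbb{R}^n)$. Since $\Omega$ is bounded, $\mathcal{X}^{1,2}_0(\Omega)$ is characterised as the functions in $H^1(\mathbb{R}^n)$ vanishing a.e. outside $\Omega$ with $u|_\Omega\in H^1_0(\Omega)$. Hence $u_-\in H^1(\mathbb{R}^n)$, $u_-=0$ outside $\Omega$, and $u_-|_\Omega\in H^1_0(\Omega)$. For the nonlocal part,
\[
|u_-(x)-u_-(y)|\le |u(x)-u(y)|,
\]
so $[u_-]_s\le [u]_s<\infty$. Thus $u_-$ is an admissible nonnegative test function. (For unbounded $\Omega$ the same argument works with the second characterisation, since $u_-\in L^{2^*}$.)

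Next, plug $\varphi=u_-$ into \eqref{dd5.1} and write $u=u_+-u_-$. For the gradient term, $\nabla u_+\cdot\nabla u_-=0$ a.e., so
\[
\int \nabla u\cdot\nabla u_-\,dx=-\int|\nabla u_-|^2\,dx.
\]
For the nonlocal term, expand
\[
(u(x)-u(y))(u_-(x)-u_-(y)) = (u_+(x)-u_+(y))(u_-(x)-u_-(y)) - |u_-(x)-u_-(y)|^2 .
\]
The mixed product equals $-u_+(x)u_-(y)-u_+(y)u_-(x)\le 0$, because $u_+u_-=0$ pointwise. Therefore
\[
0\le \int\nabla u\cdot\nabla u_- + \iint\frac{(u(x)-u(y))(u_-(x)-u_-(y))}{|x-y|^{n+2s}}
\le -\|\nabla u_-\|_{L^2}^2-[u_-]_s^2 = -\eta(u_-)^2 .
\]

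It follows that $\eta(u_-)=0$. Since $\eta$ is a norm on $\mathcal{X}^{1,2}_0(\Omega)$ (Poincaré/Sobolev, via the embedding into $L^{2^*}$), $u_-=0$ a.e., i.e. $u\ge0$ a.e. in $\Omega$.

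The only mildly delicate point is the admissibility of $u_-$ together with the sign of the cross term in the Gagliardo form. Both are handled by the pointwise identities above. There is one minor caveat: \eqref{dd5.1} writes the gradient integral over $\Omega$ rather than $\mathbb{R}^n$, but this makes no difference since $u=0$ a.e. outside $\Omega$ gives $\nabla u=0$ a.e. there.
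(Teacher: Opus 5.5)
Your proof is correct and follows essentially the paper's argument: test with $u_-$, use $\nabla u\cdot\nabla u_-=-|\nabla u_-|^2$, and split the Gagliardo form so that the cross term $(u_+(x)-u_+(y))(u_-(x)-u_-(y))$ is nonpositive. The differences are cosmetic. You conclude directly from $\eta(u_-)^2\le 0$, whereas the paper argues by contradiction using a strict inequality. You also check explicitly that $u_-$ is an admissible test function, which the paper takes for granted.
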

\begin{proof}
We argue by contradiction. Suppose that there exists a set $X \subseteq \Omega$ of positive Lebesgue measure such that $u < 0$ a.e. on $X$, then
the support of
\(
 u_{-} = \max\{-u, 0\}> 0 \), which is a subset of $X$, has positive  measure.
% \textcolor{green}{(T.M. This part needs to be checked again!)
% Since $u \in \mathcal{X}^{1,2}_0(\Omega)$ and $u \geq 0$ a.e. in $\mathbb{R}^{n} \setminus \Omega$, it follows (see Remark \ref{remarkdef}) that
% \begin{equation}\label{eq27_new}
% w \in \mathcal{X}^{1,2}_0(\Omega) \quad \text{and} \quad w \equiv 0 \text{ a.e. in } \mathbb{R}^{n} \setminus \Omega.
% \end{equation}
% Moreover,
% \begin{equation}\label{eq28_new}
% w = -u > 0 \quad \text{on } X.
% \end{equation}}
Using the weak formulation of $\mathcal{L}u \geq 0$ in $\Omega$ and taking $\varphi=u_{-}$ as a test function in \eqref{dd5.1}, we get
    \[\int_{\Omega} \nabla u.\nabla u_{-} \,dx +\int_{\mathbb{R}{^{2n}}} \frac{(u(x)-u(y))(u_{-}(x)-u_{-}(y))}{|x-y|^{n+2s}} dxdy \geq 0,\]
    where $\int_{\Omega} \nabla u.\nabla u_{-} \,dx=-\int_{\Omega}|\nabla u_{-}|^2~dx\leq 0$ and setting $u_{+}=\max\{u,0\}$ we have
   \begin{align*}
&\int_{\mathbb{R}^{2n}} \frac{(u(x) - u(y))(u_{-}(x) - u_{-}(y))}{|x - y|^{n + 2s}} \, dx dy \\
&= \int_{\mathbb{R}^{2n}} \frac{(u_{+}(x) - u_{+}(y))(u_{-}(x) - u_{-}(y))}{|x - y|^{n + 2s}} \, dx dy 
- \int_{\mathbb{R}^{2n}} \frac{(u_{-}(x) - u_{-}(y))^2}{|x - y|^{n + 2s}} \, dx dy \\
&< \int_{\mathbb{R}{^{2n}}} \frac{(u_{+}(x) - u_{+}(y))(u_{-}(x) - u_{-}(y))}{|x - y|^{n + 2s}} \, dx dy 
\leq 0.
\end{align*}
since for each $x,y\in \mathbb R^n$, one can easily verify that
    $(u_{+}(x)-u_{+}(y))(u_{-}(x)-u_{-}(y)) \leq 0.$
This is a contradiction, and hence we conclude that $u\geq 0$ in $\Omega$ a.e.
\end{proof}
\QED

We next provide a lower bound for the energy of the solution to problem \eqref{1}, which will be crucial in the variational analysis.
\begin{Lemma}\label{1lem2.8}
Let $u \in \mathcal{X}^{1,2}_0(\Omega)$ be any solution to problem \eqref{2}. 
% \begin{equation}\label{2}
% \begin{cases}
% \mathcal{L}u(x) = 
% \left( \displaystyle\int_\Omega \frac{|u_+(y)|^{2^*_{\mu}}}{|x - y|^\mu}\,dy \right)
% |u_+(x)|^{2^*_{\mu}-1}, &  \text{in}~~ \Omega, \\[1.2em]
% ~~u(x) = 0, & ~\text{in}~~~ \mathbb{R}^n \setminus \Omega.
% \end{cases}
% \end{equation}
Then the associated energy \eqref{eqI}, satisfies
\[
I(u) \geq \frac{1}{2}\left(\frac{n-\mu+2}{2n-\mu}\right) S^{\frac{2n-\mu}{n-\mu+2}}_{H, L, C}=\beta\text{(say)},
\]
where $S_{H,L,C}$ denotes the best constant defined by \eqref{ebc}. Moreover, the same conclusion holds for solutions 
$u \in \mathcal{X}^{1,2}_0(\mathbb{R}^{n})$ to 
\[
\mathcal{L}u = 
\left( \int_{\mathbb{R}^n} \frac{|u_+(y)|^{2^*_{\mu}}}{|x - y|^\mu} \, dy \right)
|u_+(x)|^{2^*_{\mu}-1}
\quad \text{in } \mathbb{R}^n.
\]
\end{Lemma}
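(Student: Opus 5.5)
The plan is to test the weak formulation \eqref{4.1} with $\varphi = u$ itself, which gives a Nehari-type identity, and then combine it with the mixed Hardy--Littlewood--Sobolev inequality \eqref{eq:mixedHLS} together with Theorem \ref{thm:best_constant}, i.e. $S_{H,L,M}(\Omega)=S_{H,L,C}$. We may assume $u\not\equiv 0$: for $u\equiv 0$ the bound fails, so the statement is understood for nontrivial solutions, the ones relevant for \eqref{1}. Set
\[
\|u\|^2 := \|\nabla u\|^2_{L^2(\mathbb{R}^n)} + [u]_s^2, \qquad B(u) := \int_{\Omega}\int_{\Omega}\frac{|u_+(x)|^{2^*_\mu}|u_+(y)|^{2^*_\mu}}{|x-y|^\mu}\,dx\,dy .
\]

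\textbf{Step 1.} Testing \eqref{4.1} with $\varphi=u$, and using $u_+^{2^*_\mu-1}u = u_+^{2^*_\mu}$, gives $\|u\|^2 = B(u)$. Plugging this into \eqref{eqI},
\[
I(u)=\Big(\frac12-\frac{1}{2\cdot 2^*_\mu}\Big)\|u\|^2 = \frac12\cdot\frac{2^*_\mu-1}{2^*_\mu}\,\|u\|^2 ,
\]
and since $2^*_\mu = \frac{2n-\mu}{n-2}$ we have $\frac{2^*_\mu-1}{2^*_\mu} = \frac{n-\mu+2}{2n-\mu}$.

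\textbf{Step 2.} Apply \eqref{eq:mixedHLS} in the form with exponent $1/2^*_\mu$ on the Choquard term, matching \eqref{ebc}. This is the form in which the constant is compared with $S_{H,L,C}$, so it is the one to use:
\[
S_{H,L,C}\, B(u)^{1/2^*_\mu} \le \|u\|^2 .
\]
By Step 1, $B(u)=\|u\|^2$, so this reads $S_{H,L,C}\|u\|^{2/2^*_\mu}\le \|u\|^2$. Since $u\neq 0$ we have $\|u\|\neq0$, and dividing gives $\|u\|^{2(1-1/2^*_\mu)} \ge S_{H,L,C}$. Now $1-1/2^*_\mu = \frac{n-\mu+2}{2n-\mu}$, hence
\[
\|u\|^2 \ge S_{H,L,C}^{\frac{2n-\mu}{n-\mu+2}} .
\]
Inserting this into Step 1 yields exactly $I(u)\ge \beta$.

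\textbf{Step 3: the whole-space case.} For $\Omega=\mathbb{R}^n$ the argument is identical. The only points to check are that $u$ is an admissible test function in $\mathcal{X}^{1,2}_0(\mathbb{R}^n)$, which holds by density, and that $B(u)<\infty$. The latter follows from Proposition \ref{HLS} and the embedding into $L^{2^*}(\mathbb{R}^n)$. Theorem \ref{thm:best_constant} applies to any open set, including $\mathbb{R}^n$.

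\textbf{Main obstacle.} The only delicate point is bookkeeping with the normalization of \eqref{eq:mixedHLS}. As printed, the Choquard term there lacks the power $1/2^*_\mu$, whereas the identification with $S_{H,L,C}$ in \eqref{ebc} requires it. I would use the homogeneous version above; the statement $S_{H,L,M}(\Omega)=S_{H,L,C}$ is only consistent with that version, since the printed one is not scale-invariant. The other point is the nontriviality assumption on $u$, which should be stated explicitly.
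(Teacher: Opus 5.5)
Your proof is correct and takes essentially the paper's route: a Nehari identity $\eta(u)^2=\int_\Omega\int_\Omega \frac{|u_+(x)|^{2^*_\mu}|u_+(y)|^{2^*_\mu}}{|x-y|^\mu}\,dx\,dy$, combined with the homogeneous form of \eqref{eq:mixedHLS} and $S_{H,L,M}(\Omega)=S_{H,L,C}$. The one difference is that the paper tests with $u_+$ and needs the weak maximum principle (Remark \ref{r2p}) to remove the cross term, whereas your choice $\varphi=u$ together with $|u_+|^{2^*_\mu-1}u=u_+^{2^*_\mu}$ makes that step unnecessary. Your two caveats are accurate and apply equally to the paper's argument: $u\not\equiv 0$ must be assumed, and the printed \eqref{eq:mixedHLS} is missing the exponent $1/2^*_\mu$.
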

%%%%%%%%%%%%%%%%%%%%

\begin{proof}
Suppose $u$ is a weak solution of \eqref{2}, then $u=u_+$ by virtue of Remark \ref{r2p}.
Now, $u_+$ is taken as a test function in the weak formulation of \eqref{2} to obtain
\[
\int_{\Omega} \nabla u \cdot \nabla u_+ \, dx 
+ \int_{\mathbb{R}^{2n}} \frac{(u(x) - u(y))(u_+(x) - u_+(y))}{|x - y|^{n + 2s}} \, dx dy 
= \int_{\Omega} \int_{\Omega}\frac{|u_+(y)|^{2_{\mu}^*}|u_+(x)|^{2_{\mu}^*-1} u_{+}(x)}{|x-y|^\mu}dxdy.
\]
Expanding and simplifying the local and nonlocal terms yield
\begin{equation}\label{plusTestFunc}
\int_{\Omega} |\nabla u_+|^2 \, dx 
+ \int_{\mathbb{R}^{2n}} \frac{|u_+(x) - u_+(y)|^2}{|x - y|^{n + 2s}} \, dx dy 
+ 2 \int_{\mathbb{R}^{2n}} \frac{u_+(x) u_-(y)}{|x - y|^{n + 2s}} \, dx dy 
= \int_{\Omega} \int_{\Omega}\frac{|u_+(y)|^{2_{\mu}^*}|u_+(x)|^{2_{\mu}^*} }{|x-y|^\mu}dxdy.
\end{equation}
% Similarly, $u_{-}$ is taken as a test function in the weak formulation of \eqref{2}. Then we obtain
% \begin{equation}
% -\int_{\Omega} |\nabla u_-|^2 \, dx 
% - \int_{\mathbb{R}^{2n}} \frac{|u_-(x) - u_-(y)|^2}{|x - y|^{n + 2s}} \, dx dy 
% - 2 \int_{\mathbb{R}^{2n}} \frac{u_+(x) u_-(y)}{|x - y|^{n + 2s}} \, dx dy 
% = 0.
% \end{equation}
% Now, it is easy to see that $\eta(u_{-})=0$ that implies $u_{-} =0$ a.e. in $\mathbb{R}^n$. 
So, from \eqref{plusTestFunc} and using remark \ref{r2p}, we have 
\[
\eta(u_+)^2
=
\int_{\Omega} \int_{\Omega}\frac{|u_+(x)|^{2_{\mu}^*}|u_+(y)|^{2_{\mu}^*}}{|x-y|^\mu}dxdy.
\]
To estimate $\eta(u_+)^2$, we use inequality \eqref{eq:mixedHLS} and conclude that
\[
(S_{H,L,M})^{\frac{2^*_{\mu}}{2^*_{\mu}-1}} (\Omega)\leq 
\int_{\Omega}\int_{\Omega}\frac{|u_+(x)|^{2_{\mu}^*}|u_+(y)|^{2_{\mu}^*}}{|x-y|^\mu}dxdy
=\eta(u+)^2=\eta(u)^2.
\]
Now we simplify the coefficient using $2_{\mu}^*=\dfrac{2n-\mu}{n-2}$ to get
\[
\frac12 - \frac{1}{2\cdot 2_{\mu}^*}
= \frac12\left(\frac{n - \mu + 2}{2n - \mu}\right).
\]
Finally, we can deduce that
\[
I(u) \ge \frac12\left(\frac{n - \mu + 2}{2n - \mu}\right) S_{H,L,M}^{\frac{2n - \mu}{n - \mu + 2}}(\Omega)\ge \frac12\left(\frac{n - \mu + 2}{2n - \mu}\right) S_{H,L,C}^{\frac{2n - \mu}{n - \mu + 2}}.
\]
while using the fact that $S_{H,L,M}(\Omega) = S_{H,L,C}$.
A similar reasoning applies even when $\Omega=\mathbb{R}^n$, completing the proof. 
\end{proof}\QED

%%%%%%%%%%%%%%%%%%%%%%%%%%%%%%%%%%%%%%%%%%%%
\section{Regularity of weak solutions}

In this section, we establish that any weak solution of \eqref{1} is bounded and, furthermore, belongs to the space $C^{2,\beta}(\overline{\Omega})$ for a suitable $\beta>0$.

\textbf{Proof of Theorem \ref{thm:7.1}.}
We may assume that $u\not\equiv0$ and $u\ge0$ a.e. in $\Omega$.
The argument follows the De Giorgi iteration scheme as in \cite[Theorem~6.2]{mukherjee2016fractional} and also one can refer to the details in Theorem 1.5 of \cite{anthal2023choquard}. Let $m>0$ and define
\[
\mathcal U(x)=\frac{u(x)}{m}.
\]
such that $\|\mathcal U(x)\|_{L^{2^*_\mu}(\mathbb R^n)}=\rho$, for some $\rho>0$. Then $\mathcal U\in\mathcal X^{1,2}_0(\Omega)$ and satisfies
\begin{equation}\label{eq55}
\int_{\mathbb{R}^{n}}\nabla\mathcal U\cdot\nabla\varphi\,dx
+\int_{\mathbb{R}^{2n}}\frac{(\mathcal U(x)-\mathcal U(y))(\varphi(x)-\varphi(y))}{|x-y|^{n+2s}}\,dx\,dy
\le
\int_{\Omega}\int_{\Omega}
\frac{|\mathcal U(y)|^{2^*_\mu}|\mathcal U(x)|^{2^*_\mu-1}\varphi(x)}{|x-y|^\mu}\,dy\,dx
\end{equation}
for every $0\le\varphi\in\mathcal X^{1,2}_0(\Omega)$.
For $k\in\mathbb N$, set
\[
\mathcal Z_k=1-\frac1{2^k},\qquad
\mathcal U_k=\mathcal U-\mathcal Z_k,\qquad
v_k=(\mathcal U_k)^+,
\qquad
\bar V_k=\|v_k\|_{L^{2^*}(\Omega)} .
\]
Testing \eqref{eq55} with $\varphi=v_{k+1}$ and using the elementary inequality
\[
|v_k(x)-v_k(y)|^2
\le (\mathcal U_k(x)-\mathcal U_k(y))(v_k(x)-v_k(y)),
\]
together with the Hardy-Littlewood-Sobolev inequality, we obtain the estimate
\[
\eta(v_{k+1})^2
\le C\,2^{\alpha(k+1)}
\Big(\|v_k\|_{L^{2^*}(\Omega)}^{2\cdot2^*_\mu}
+
|\{v_{k+1}>0\}|^{\frac{\mu}{2n}}
\|v_k\|_{L^{2^*}(\Omega)}^{2^*_\mu}\Big),
\]
for some constants $C>0$ and $\alpha>0$ independent of $k$.
Using the mixed Sobolev inequality and the inclusion,
\[
\{v_{k+1}>0\}\subset
\left\{v_k>\frac1{2^{k+1}}\right\},
\]
we deduce the recursive estimate
\begin{equation}\label{eq520}
\bar V_{k+1}
\le
\mathcal B^{k+1}
\left(
\bar V_k^{2^*_\mu}
+
\bar V_k^{\frac{2^*}{2}}
\right),
\end{equation}
for some constant $\mathcal B>1$.
Choosing $\rho>0$ sufficiently small and using a standard iteration argument, we obtain
\[
\bar V_k\le a\,t^{k+1},
\qquad t\in(0,1),
\]
for some $a>0$. Hence, $\bar V_k\to0$ as $k\to\infty$. By the dominated convergence theorem,
\[
\lim_{k\to\infty}\bar V_k
=
\left(\int_\Omega[(\mathcal U-1)^+]^{2^*}dx\right)^{\frac1{2^*}}
=0,
\]
which implies $\mathcal U\le1$ a.e. in $\Omega$. Therefore $u\in L^\infty(\Omega)$.
Since $u\equiv0$ in $\mathbb R^n\setminus\Omega$, we conclude that
$
u\in L^\infty(\mathbb R^n).
$
\QED

\textbf{Proof of Theorem \ref{thm1.5}:}
From Theorem~\eqref{thm:7.1}, we already know that $u \in L^{\infty}(\mathbb{R}^n)$. 
We now establish that $u \in C^{1,\alpha}(\overline{\Omega})$ for some $\alpha \in (0,1)$. Since $0<\mu<n$ and $\Omega$ is bounded, the right-hand side of \eqref{1} belongs to $L^{\infty}(\Omega)$.
% we have
% \begin{align}
% \left|\int\limits_{\Omega} \frac{|u(y)|^{2_{\mu}^{*}}}{|x-y|^{\mu}} \, dy \right|
% &\leq \|u\|_{L^{\infty}(\Omega)}^{2_{\mu}^{*}}
% \Bigg[
% \int\limits_{\Omega \cap \{|x-y|<1\}} \frac{dy}{|x-y|^{\mu}}
% + \int\limits_{\Omega \cap \{|x-y|\geq 1\}} \frac{dy}{|x-y|^{\mu}}
% \Bigg] \notag\\
% &\leq \|u\|_{L^{\infty}(\Omega)}^{2_{\mu}^{*}}
% \Bigg[
% \int\limits_{\Omega \cap \{r<1\}} r^{n-1-\mu} \, dr
% + |\Omega|
% \Bigg] <+ \infty.
% \end{align}
If $s \in (0,1/2]$, we apply \cite[Theorem~1.4]{su2022regularity} to obtain 
$u \in W^{2,p}(\Omega)$ for all $p \in (1,\infty)$, 
which implies $u \in C^{1,\beta}(\overline{\Omega})$ for every $\beta \in (0,1)$. 
If instead $s \in (1/2,1)$, then by \cite[Theorem~2.7]{biagi2023faber}, 
we have $u \in W^{2,p}(\Omega)$ for all $p \in \left(1,\frac{n}{2s-1}\right)$,
and consequently $u \in C^{1,\beta}(\overline{\Omega})$ for $\beta \in (0,2-2s)$.
Next, we study the regularity of the Riesz potential term
\[
w(x)=\int\limits_{\Omega}\frac{|u(y)|^{2_{\mu}^{*}}}{|x-y|^{\mu}}\,dy.
\]
Since $u\in L^{\infty}(\mathbb{R}^{n})$ and $u \equiv 0$ in $\mathbb{R}^{n}\setminus\Omega$, 
we deduce $|u|^{2_{\mu}^{*}},\,w \in L^{\infty}(\Omega)$. 
Let $d \in (0,n)$ be such that $\mu = n - d$. 
Then, applying \cite[Proposition~1.4(iii)]{ros2014extremal}, 
we infer that $w \in C^{0,\beta}(\overline{\Omega})$ for all $\beta < \min\{s,d\}$.
Therefore, the right-hand side of \eqref{1} belongs to $C^{0,\beta}(\overline{\Omega})$.
Finally, assuming that $\partial\Omega$ is of class $C^{2,\alpha}$, 
an application of \cite[Theorem~2.8]{biagi2023faber} yields 
$u \in C^{2,\beta}(\overline{\Omega})$ for some 
$\beta \leq \min\{b,\alpha\}$, $b>0$
which completes the proof.
\QED

%%%%%%%%%%%%%%%%%%%%%%%%%%%%%%%%%%%%%%%%%%%%%%%%%%%%%%%%%%%%%

Now, with the aid of the regularity result, Theorem~\ref{thm1.5}, we can derive the following maximum principle.

\begin{Proposition}\label{sm}
Let $\alpha \in (0,1)$ be as in Theorem~\ref{thm1.5}, and suppose that $\partial \Omega$ is of class $C^{2,\alpha}$. 
Assume that $u$ is a nontrivial nonnegative solution 
of~\eqref{1}. Then $u>0$ in $\Omega$.
\end{Proposition}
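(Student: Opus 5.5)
The plan is to argue by contradiction, evaluating the operator pointwise at an interior zero of $u$. By Theorem~\ref{thm1.5}, under the assumption that $\partial\Omega$ is $C^{2,\alpha}$, the solution satisfies $u\in C^{2,\beta}(\overline{\Omega})$. Moreover $u\equiv 0$ outside $\Omega$, and $u\in L^\infty(\mathbb{R}^n)$ by Theorem~\ref{thm:7.1}. So $\mathcal{L}u$ can be computed classically at every interior point. Indeed, $-\Delta u$ is defined pointwise, and the principal value defining $(-\Delta)^s u(x_0)$ converges for $x_0\in\Omega$. Near $x_0$ the function $u$ is $C^{2}$, so the symmetrized second difference is integrable against $|z|^{-n-2s}$. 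Away from $x_0$, boundedness of $u$ suffices. I would first record that the weak formulation \eqref{1} then holds pointwise in $\Omega$. This follows by integrating by parts against $\varphi\in C_c^\infty(\Omega)$, using the symmetry of the Gagliardo bilinear form, and using continuity of both sides.

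Suppose there is $x_0\in\Omega$ with $u(x_0)=0$. Since $u\ge 0$ in $\mathbb{R}^n$ and $u\in C^2$ near $x_0$, the point $x_0$ is an interior minimum. Hence $\nabla u(x_0)=0$ and $D^2u(x_0)\ge 0$, which gives $-\Delta u(x_0)\le 0$. For the nonlocal part,
\[
(-\Delta)^s u(x_0)=C(n,s)\,\mathrm{P.V.}\int_{\mathbb{R}^n}\frac{u(x_0)-u(y)}{|x-y|^{n+2s}}\Big|_{x=x_0}dy=-C(n,s)\int_{\mathbb{R}^n}\frac{u(y)}{|x_0-y|^{n+2s}}\,dy .
\]
This quantity is strictly negative, because $u$ is nonnegative and nontrivial, hence positive on a set of positive measure. (Here the P.V. actually converges absolutely, since $u(y)=O(|y-x_0|^2)$ near $x_0$.) On the other side, the right-hand side of \eqref{1} at $x_0$ equals $\big(\int_\Omega u(y)^{2^*_\mu}|x_0-y|^{-\mu}dy\big)\,u(x_0)^{2^*_\mu-1}=0$, because $2^*_\mu-1>0$. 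Therefore
\[
0=\mathcal{L}u(x_0)=-\Delta u(x_0)+(-\Delta)^s u(x_0)<0,
\]
a contradiction. Hence $u>0$ in $\Omega$.

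The main obstacle is making the pointwise step rigorous: checking that the weak solution really satisfies the equation classically at $x_0$, with the paper's normalization of the Gagliardo term. The paper's bilinear form omits the constant $C(n,s)$, but only the sign matters, so this is harmless. If the global $C^{2,\beta}$ regularity were unavailable, I would instead use a weak strong-maximum-principle argument. One would test against a nonnegative bump $\varphi$ supported near a zero set and exploit the strictly negative nonlocal contribution $-2\int\!\!\int u(y)\varphi(x)|x-y|^{-n-2s}$, in the spirit of the computation in Proposition~\ref{weakm}. Given Theorem~\ref{thm1.5}, however, the pointwise argument suffices.
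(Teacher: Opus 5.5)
Your proposal is correct and follows essentially the same argument as the paper: assume an interior zero $x_0$, use the $C^{2,\beta}$ regularity from Theorem~\ref{thm1.5} to get $-\Delta u(x_0)\le 0$ at the minimum, use nontriviality of $u$ to get $(-\Delta)^s u(x_0)<0$, and contradict the vanishing of the Choquard right-hand side at $x_0$. Your extra remarks fill in details the paper leaves implicit: that the weak equation holds pointwise in $\Omega$, that the normalization constant is harmless, and that the principal value converges absolutely because $u(y)=O(|y-x_0|^2)$.
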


\begin{proof}
Assume, by contradiction, that there exists $x_0 \in \Omega$ such that $u(x_0)=0$. 
Since $u \geq 0$ in $\Omega$, it follows that $x_0 = \mathrm{argmin}_{x \in {\Omega}} u(x)$. 
As $u \in C^{2,\alpha}(\bar{\Omega})$, from Theorem \ref{thm1.5}, we have $-\Delta u(x_0) \leq 0$. 
Moreover, since $u$ is nontrivial and continuous, there exists a $y_0 \in \Omega$ and $r>0$ such that $u>0$ in $B_r(y_0)$. 
This implies $(-\Delta)^s u(x_0) < 0$, and consequently $\mathcal{L}(x_0) < 0$. 
However, the nonlocal term in~\eqref{1},
% \[
% \left( \int_{\Omega} \frac{u(y)^{2_\mu^\ast}}{|x - y|^\mu} \, dy \right) u(x_0)^{2_\mu^\ast - 1},
% \]
vanishes at $x_0$, leading to a contradiction. 
Hence, $u>0$ in $\Omega$. 
\end{proof}
\QED

%%%%%%%%%%%%%%%%%%%%%%%%%%%%%%%%%%%%%%%%%%%%%
\section{Palais-Smale Sequences and Global Compactness}
This section is devoted to analysis and Palais-Smale sequences and establishes compactness results, which are crucial to proving our main results.
We start with the Brezis-Lieb type lemma for the  Choquard term. 
\begin{Lemma}\label{1lem4.2-pos}
Let $u_k \rightharpoonup u$ be weakly convergent in $\mathcal{X}^{1,2}_0(\mathbb{R}^{n})$ and $u_k \to u$ a.e.\ on $\mathbb{R}^n$. Then as $k\to \infty$
\begin{equation}\label{eq4.1-pos}
\begin{aligned}
\big(|x|^{-\mu} * |(u_k)_+|^{2^*_{\mu}}\big)\,|(u_k)_+|^{2^*_{\mu}-2}(u_k)_+ 
&- \big(|x|^{-\mu} * |(u_k - u)_+|^{2^*_{\mu}}\big)\,|(u_k - u)_+|^{2^*_{\mu}-2}(u_k - u)_+ \\
& \quad \to \big(|x|^{-\mu} * |u_+|^{2^*_{\mu}}\big)\,|u_+|^{2^*_{\mu}-2}u_+
\quad \text{in } \big( \mathcal X^{1,2}_0(\mathbb R^n))'.
\end{aligned}
\end{equation}
\end{Lemma}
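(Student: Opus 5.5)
We follow the classical Brezis--Lieb strategy for Choquard terms (as in Gao--Yang), using duality in $\big(\mathcal X^{1,2}_0(\mathbb R^n)\big)'$. Write $f(t)=|t_+|^{2^*_\mu-2}t_+ = t_+^{2^*_\mu-1}$, $F(t)=t_+^{2^*_\mu}$, and $w_k=u_k-u$. Since $\mathcal X^{1,2}_0(\mathbb R^n)\hookrightarrow L^{2^*}(\mathbb R^n)$ continuously, it suffices to show that, uniformly for $\|\varphi\|_{L^{2^*}}\le C$,
\[
\int_{\mathbb R^n}\Big[(|x|^{-\mu}*F(u_k))f(u_k)-(|x|^{-\mu}*F(w_k))f(w_k)-(|x|^{-\mu}*F(u))f(u)\Big]\varphi\,dx\to 0 .
\]
Boundedness of $\{u_k\}$ in $L^{2^*}$ comes from weak convergence. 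Also $w_k\rightharpoonup 0$ in $L^{2^*}$, $w_k\to0$ a.e.

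First, the pointwise Brezis--Lieb splittings:
\[
F(u_k)-F(w_k)\to F(u)\quad\text{in } L^{\frac{2n}{2n-\mu}},
\]
\[
f(u_k)-f(w_k)\to f(u)\quad\text{in } L^{\frac{2n}{n-\mu+2}}.
\]
The first is the standard Brezis--Lieb lemma, since $F$ is $C^1$ with $|F'(t)|\le C|t|^{2^*_\mu-1}$. For the second we need $2^*_\mu-1>1$, or else an elementary Hölder-type inequality if $2^*_\mu-1\le1$. In both cases we use $|f(a+b)-f(a)|\le \varepsilon|a|^{2^*_\mu-1}+C_\varepsilon|b|^{2^*_\mu-1}$ together with a.e. convergence and Vitali/dominated convergence as in Brezis--Lieb. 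The exponents match: $2^*/2^*_\mu=\frac{2n}{2n-\mu}$, and $2^*/(2^*_\mu-1)=\frac{2n}{n-\mu+2}$, which is the conjugate pairing needed with $\varphi\in L^{2^*}$ and the Riesz potential in $L^{2n/\mu}$ by HLS.

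Next we expand. Set $A_k=F(u_k)-F(w_k)-F(u)\to0$ and $B_k=f(u_k)-f(w_k)-f(u)\to 0$ in the spaces above. Then
\[
(|x|^{-\mu}*F(u_k))f(u_k)=(|x|^{-\mu}*(F(w_k)+F(u)+A_k))(f(w_k)+f(u)+B_k).
\]
Pairing with $\varphi$ and using HLS, every term involving $A_k$ or $B_k$ is bounded by $C\|\varphi\|_{2^*}$ times a vanishing norm, so it tends to $0$ uniformly. The two diagonal terms are exactly the desired ones. What remains are the cross terms
\[
\int(|x|^{-\mu}*F(w_k))f(u)\varphi \qquad\text{and}\qquad \int(|x|^{-\mu}*F(u))f(w_k)\varphi .
\]

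The main obstacle is showing these cross terms vanish uniformly in $\varphi$, since $w_k\to0$ only weakly. For the second term, $f(w_k)\to0$ a.e. and is bounded in $L^{\frac{2n}{n-\mu+2}}$. Meanwhile $h:=|x|^{-\mu}*F(u)\in L^{2n/\mu}$ is fixed. We split $\mathbb R^n$ into a large ball $B_R$ and its complement, and on $B_R$ further into the set where $h\le M$ and its complement. Tails and large values of $h$ are small in $L^{2n/\mu}$, which by Hölder gives a small contribution uniformly in $\varphi$ and $k$. On the remaining bounded region with $h\le M$, Egorov's theorem plus uniform integrability of $|f(w_k)|^{\frac{2n}{n-\mu+2}}$ yields $\|f(w_k)h\,\chi\|_{L^{(2^*)'}}\to 0$. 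Uniform integrability holds since $f(w_k)$ is bounded in the slightly larger space $L^{2^*/(2^*_\mu-1)}$ with the smaller measure set, after Hölder on the bounded set.

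For the first cross term we use the symmetry of the Riesz kernel and rewrite it as
\[
\int F(w_k)\,\big(|x|^{-\mu}*(f(u)\varphi)\big).
\]
Here $g_\varphi=|x|^{-\mu}*(f(u)\varphi)$ ranges over a set bounded in $L^{2n/\mu}$, but it is not precompact, so uniformity is subtle. We handle this by the same truncation of $f(u)$, restricting to $|u|\le M$ on $B_R$. The part where $f(u)$ is small in $L^{\frac{2n}{n-\mu+2}}$ contributes little uniformly. On the truncated part, we estimate $F(w_k)=|w_k|^{2^*_\mu-\theta}|w_k|^{\theta}$ and exploit local strong convergence $w_k\to0$ in $L^q_{loc}$ for $q<2^*$ (Rellich for $H^1$) to get smallness uniform in $\varphi$. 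This combination of truncation and local compactness is where we expect the real work to lie.
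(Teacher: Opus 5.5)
Your outline is correct, and it takes a genuinely different route from the paper. The paper fixes one test function $\phi$ and writes the difference as $I_1+I_2+I_3-2I_4$. It uses the Brezis--Lieb splitting only for $F(t)=t_+^{2^*_\mu}$ (strong convergence of the Riesz potential in $L^{2n/\mu}$), paired with weak convergence of $f(u_k)-f(u_k-u)$. It then disposes of $I_2,I_3,I_4$ by Vitali applied to $f(\cdot)\phi$, with equi-integrability coming from the fixed $\phi$. This is short, but as written it only gives convergence after pairing with each fixed $\phi$, i.e.\ weak-$*$ convergence. The pieces $I_2,I_4$ do not individually tend to zero in the dual norm when $u_k-u$ concentrates. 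For instance, for $u=0$ one has $I_1=0$ and $I_2=I_3=I_4$, and for a concentrating truncated bubble this common term has dual norm bounded away from zero. You instead split \emph{both} factors, $F(u_k)=F(w_k)+F(u)+A_k$ and $f(u_k)=f(w_k)+f(u)+B_k$, with $A_k\to0$ in $L^{\frac{2n}{2n-\mu}}$ and $B_k\to0$ in $L^{\frac{2n}{n-\mu+2}}$; your exponent for $f$ is the right one. After HLS, only the two cross terms need uniform control. This costs more work but gives genuine convergence in the dual norm. That is what the statement asserts, and it is what is used later, e.g.\ to get $J'(v_k)\to0$ in the dual in Lemma~\ref{lem4.2}.

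Two steps should be made precise.

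\textbf{Second cross term.} The uniform integrability you need is that of $|f(w_k)|^{\frac{2n}{n+2}}$ on bounded sets, not of $|f(w_k)|^{\frac{2n}{n-\mu+2}}=(w_k)_+^{2^*}$, which fails under concentration. More directly, $\int_A|hf(w_k)|^{\frac{2n}{n+2}}\le\|h\|_{L^{2n/\mu}(A)}^{\frac{2n}{n+2}}\sup_k\|f(w_k)\|_{L^{\frac{2n}{n-\mu+2}}}^{\frac{2n}{n+2}}$. So Vitali applies with the fixed $h$, and no truncation is needed.

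\textbf{First cross term.} Local compactness alone is not enough here. With $E=B_R\cap\{u\le M\}$, the function $g_\varphi=|x|^{-\mu}*(f(u)\chi_E\varphi)$ is not compactly supported, and membership in $L^{2n/\mu}$ only pairs with $F(w_k)$ at the critical level $L^{2^*}$.
\begin{itemize}
\item What makes your idea work is that, after truncation, $f(u)\chi_E\varphi$ is bounded in every $L^t$, $1\le t\le 2^*$, uniformly for $\|\varphi\|_{2^*}\le C$.
\item Choosing $\frac{2n}{2n-\mu}<t<\min\{2^*,\frac{n}{n-\mu}\}$, HLS bounds $g_\varphi$ uniformly in some $L^r$ with $r>\frac{2n}{\mu}$.
\item On a large ball, H\"older then needs $w_k$ only in $L^{2^*_\mu r'}$ with $2^*_\mu r'<2^*$, where Rellich applies.
\item Outside $B_\rho$ with $\rho\ge 2R$, one has $|g_\varphi(x)|\le C|x|^{-\mu}$ uniformly, and $\|\,|x|^{-\mu}\chi_{\{|x|>\rho\}}\|_{L^{2n/\mu}}=C\rho^{-\mu/2}$, so the tail is uniformly small.
\end{itemize}
Alternatively, splitting the kernel at $|z|=\delta$ shows that $|x|^{-\mu}*F(w_k)\to0$ in measure on bounded sets. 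After that, this cross term is handled exactly like the other one, by Vitali with the fixed weight $f(u)$.
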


\begin{proof}
Since $u_k \rightharpoonup u$ weakly in $\mathcal{X}^{1,2}_0(\mathbb{R}^{n})$, there exists $M > 0$ such that $\eta(u_k) < M$ for all $k \in \mathbb{N}$. Let {$\phi \in \mathcal{X}^{1,2}_0(\mathbb{R}^{n})$} and define
\[
I_k = \int_{\mathbb{R}^n} \bigl[ \big(|x|^{-\mu} * |(u_k)_+|^{2^*_{\mu}}\big)\,|(u_k)_+|^{2^*_{\mu}-2}(u_k)_+ - \big(|x|^{-\mu} * |(u_k - u)_+|^{2^*_{\mu}}\big)\,|(u_k - u)_+|^{2^*_{\mu}-2}(u_k - u)_+ \bigr]\phi\, dx.
\]
Then $I = I_1 + I_2 + I_3 - 2 I_4$, where
\begin{align*}
I_1 &= \int_{\mathbb{R}^n}
\Bigl(
(|x|^{-\mu} * |(u_k)_+|^{2^*_{\mu}})
-
(|x|^{-\mu} * |(u_k - u)_+|^{2^*_{\mu}})
\Bigr) \\
&\quad \times
\Bigl[
|(u_k)_+|^{2^*_{\mu}-2}(u_k)_+
-
|(u_k - u)_+|^{2^*_{\mu}-2}(u_k - u)_+
\Bigr] \phi\, dx, \\[6pt]
I_2 &= \int_{\mathbb{R}^n}
(|x|^{-\mu} * |(u_k)_+|^{2^*_{\mu}})
\, |(u_k - u)_+|^{2^*_{\mu}-2}(u_k - u)_+
\, \phi\, dx, \\[6pt]
I_3 &= \int_{\mathbb{R}^n}
(|x|^{-\mu} * |(u_k - u)_+|^{2^*_{\mu}})
\, |(u_k)_+|^{2^*_{\mu}-2}(u_k)_+
\, \phi\, dx, \\[6pt]
I_4 &= \int_{\mathbb{R}^n}
(|x|^{-\mu} * |(u_k - u)_+|^{2^*_{\mu}})
\, |(u_k - u)_+|^{2^*_{\mu}-2}(u_k - u)_+
\, \phi\, dx.
\end{align*}
\noindent\textbf{Claim 1.} It holds that
\[
\lim_{k \to \infty} I_1 = \int_{\mathbb{R}^n} (|x|^{-\mu} * |u_+|^{2^*_{\mu}})\,|u_+|^{2^*_{\mu}-2}u_+ \phi dx.
\]
% Since $\{u_k\}$ is a bounded sequence in $\mathcal{X}^{1,2}_0(\mathbb{R}^{n})$ and by the Sobolev embedding,
% \(
% \mathcal{X}^{1,2}_0(\mathbb{R}^{n}) \hookrightarrow L^{2^*}(\mathbb{R}^n),
% \)
% there exists a constant $C>0$ such that
% \[
% \|u_k\|_{L^{2^*}(\mathbb{R}^n)} \le C,~\forall~ k\in\mathbb{N}.
% \]
% For $p = 2_{\mu}^* = \frac{2n-\mu}{n-2}$, note that $p < 2^*$, and therefore $\{u_k\}$ is also bounded in $L^p(\mathbb{R}^n)$, since $\mathcal{X}^{1,2}_0(\mathbb{R}^{n}) \hookrightarrow L^r(\mathbb{R}^n), ~r \in [2, 2^*]$.
% Consequently, 
% \[
% |(u_k)_+|^{2^*_{\mu}} \in L^{\frac{2n}{2n-\mu}}(\mathbb{R}^n), ~\text{for all } k.
% \]
  {First, similarly to the proof of the Brezis-Lieb lemma \cite{brezis1983relation}, we know that
\[
|(u_k)_+|^{2^*_{\mu}} - |(u_k - u)_+|^{2^*_{\mu}} \to |u_+|^{2^*_{\mu}} \quad \text{in } L^{\frac{2n}{2n - \mu}}(\mathbb{R}^n), \quad \text{as } k \to \infty.
\]
Then, using the Hardy-Littlewood-Sobolev inequality, which implies that the Riesz potential defines a linear continuous map from $L^{\frac{2n}{2n-\mu}}(\mathbb{R}^{n})$ to $L^{\frac{2n}{\mu}}(\mathbb{R}^{n})$, we deduce that
\begin{equation}\label{eq4.2-pos}
|x|^{-\mu} * \left( |(u_k)_+|^{2^*_{\mu}} - |(u_k - u)_+|^{2^*_{\mu}} \right) \to |x|^{-\mu} * |u_+|^{2^*_{\mu}} \quad \text{strongly in } L^{\frac{2n}{\mu}}(\mathbb{R}^n) ~~\text{as}~k\to\infty.
\end{equation}}
Since both $|(u_k)_+|^{2^*_{\mu}-2}(u_k)_+ \rightharpoonup |u_+|^{2^*_{\mu}-2}u_+$ and $|(u_k - u)_+|^{2^*_{\mu}-2}(u_k - u)_+ \rightharpoonup 0$ weakly in $L^{\frac{2n}{2n-\mu}}(\mathbb{R}^n)$,
we obtain
\begin{equation}\label{eq4.3-pos}
|(u_k)_+|^{2^*_{\mu}-2}(u_k)_+  - |(u_k - u)_+|^{2^*_{\mu}-2}(u_k - u)_+ \rightharpoonup |u_+|^{2^*_{\mu}-2}u_+ \quad \text{weakly in } L^{\frac{2n}{2n - \mu}}(\mathbb{R}^{n}).
\end{equation}
Thus, combining \eqref{eq4.2-pos} and \eqref{eq4.3-pos}, Claim 1 follows.

\noindent\textbf{Claim 2.} It holds \[\lim_{k \to \infty} I_2 = 0.\]
Since $|(u_k)_+|^{2^*_{\mu}} \rightharpoonup |u_+|^{2^*_{\mu}}$ weakly in $L^{\frac{2n}{2n-\mu}}(\mathbb{R}^n)$(up to a subsequence), by the Hardy-Littlewood-Sobolev inequality,
\begin{equation}\label{eq4.4-pos}
|x|^{-\mu} * |(u_k)_+|^{2^*_{\mu}} \rightharpoonup |x|^{-\mu} * |u_+|^{2^*_{\mu}} \quad \text{weakly in } L^{\frac{2n}{\mu}}(\mathbb{R}^n).
\end{equation}
Next, we aim to show that 
\[
|(u_k - u)_+|^{2^*_{\mu}-2}(u_k - u)_+\to 0 
\quad \text{strongly in } L^{\frac{2n}{2n-\mu}}(\mathbb{R}^n).
\]
Indeed, since $u_k \to u$ a.e. in $\mathbb{R}^{n}$, we have
\[
|(u_k - u)_+|^{2^*_{\mu}-2}(u_k - u)_+\to 0 
\quad \text{a.e. in } \mathbb{R}^n.
\]
Moreover, for any open set $U \subset \mathbb{R}^n$, H\"older’s inequality gives
\begin{align*}
\int_U \left| |(u_k - u)_+|^{2^*_{\mu}-2}(u_k - u)_+\phi \right|^{\frac{2n}{2n-\mu}} dx 
&\leq \left(\int_U |(u_k - u)_+|^{2^*_{\mu}} dx\right)^{\frac{n-\mu+2}{2n-\mu}} \left(\int_U |\phi|^{2^*_{\mu}} dx\right)^{\frac{n-2}{2n-\mu}} \\
&\leq M \left(\int_U |\phi|^{2^*_{\mu}} dx\right)^{\frac{n-2}{2n-\mu}}.
\end{align*}
This implies $\{||(u_k - u)_+|^{2^*_{\mu}-2}(u_k - u)_+\phi|^{\frac{2n}{2n-\mu}}\}_k$ is equi-integrable in $L^1(\mathbb{R}^n)$.  
So by Vitali's convergence theorem, we get
\[
|(u_k - u)_+|^{2^*_{\mu}-2}(u_k - u)_+\phi \to 0 \quad \text{strongly in } L^{\frac{2n}{2n-\mu}}(\mathbb{R}^n).
\]
Finally, combining \eqref{eq4.4-pos} with the above strong convergence, we conclude that $
\lim_{k\to\infty}I_2 = 0$
which completes Claim 2. Next, we can prove that
\[
\lim_{k\to\infty} I_3 = 0 \qquad\text{and}\qquad \lim_{k\to\infty} I_4 = 0.
\]
using arguments identical to those used in Claim~2.
Consequently,  \eqref{eq4.1-pos} holds that is
\[
I_k \to \int_{\mathbb{R}^n} \big(|x|^{-\mu} * |u|^{2^*_{\mu}}\big) |u_+|^{2^*_{\mu}-2} u_+\phi \, dx.
\]
\end{proof}
\QED
Our next lemma talks about the characterization of (PS)-sequences with respect to energy functionals $I$ and $J$.
\begin{Lemma}\label{lem4.2}
Let \(\{u_k\} \subset \mathcal{X}^{1,2}_0(\Omega)\) be a (PS)-sequence such that  
\[
u_k \rightharpoonup u \quad \text{weakly in } \mathcal{X}^{1,2}_0(\Omega), \quad u_k \to u \quad \text{a.e.\ in } \Omega,
\]
and suppose that 
\(
I(u_k) \to c, ~ \text{and}~ I'(u_k) \to 0 ~ \text{in } (\mathcal{X}^{1,2}_0(\Omega))',
\)
as \(k \to \infty\). Then the following holds:  
\begin{enumerate}
    \item \(I'(u) = 0\) in \((\mathcal{X}^{1,2}_0(\Omega))'\) and defining \(v_k := u_k - u\), it satisfies as $k\to \infty$
    \[
    \eta(v_k)^2 = \eta(u_k)^2 - \eta(u)^2 + o(1).
    \] 
    \item The sequence $\{v_k\}$ satisfies, as $k\to \infty$,
    \[
    J(v_k) \to c - I(u) ~~~\text{and}~~ J'(v_k) \to 0 \quad \text{in } (\mathcal{X}^{1,2}_0(\Omega))'.
    \]
\end{enumerate}
\end{Lemma}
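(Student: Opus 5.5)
The plan is the standard Brezis--Lieb splitting, with Lemma \ref{compact_embedding} used to remove the nonlocal part of the operator in the limit; this is what lets $J$ (which contains only the Dirichlet energy) appear.

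\textbf{Step 1: $u$ is a critical point.} Fix $\varphi\in\X(\Omega)$. Weak convergence in the Hilbert space $\X(\Omega)$ gives $\langle u_k,\varphi\rangle_\eta\to\langle u,\varphi\rangle_\eta$. For the Choquard term, since $(u_k)_+$ is bounded in $L^{2^*}$ and converges a.e., we have
\[
|(u_k)_+|^{2^*_\mu}\rightharpoonup |u_+|^{2^*_\mu}\ \text{in } L^{\frac{2n}{2n-\mu}}, \qquad |(u_k)_+|^{2^*_\mu-1}\rightharpoonup |u_+|^{2^*_\mu-1}\ \text{in } L^{\frac{2n}{n-\mu+2}}.
\]
By the Hardy--Littlewood--Sobolev inequality (Proposition \ref{HLS}), the Riesz potentials converge weakly in $L^{2n/\mu}$. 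Passing to the limit in the product with $\varphi$ is exactly the argument of Claim 2 in Lemma \ref{1lem4.2-pos}: split off one factor and apply Vitali's theorem to $|(u_k)_+|^{2^*_\mu-1}\varphi$. Hence $\langle I'(u_k),\varphi\rangle\to\langle I'(u),\varphi\rangle$, so $I'(u)=0$.

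\textbf{Step 2: norm splitting.} The identity
\[
\eta(v_k)^2=\eta(u_k)^2-2\langle u_k,u\rangle_\eta+\eta(u)^2=\eta(u_k)^2-\eta(u)^2+o(1)
\]
follows directly from weak convergence. The same holds separately for $\|\nabla\cdot\|_2^2$ and for $[\cdot]_s^2$. The key additional input is Lemma \ref{compact_embedding}: $\X(\Omega)\hookrightarrow H^s(\Omega)$ is compact. Since $v_k=0$ outside the bounded set $\Omega$, I expect this to give $[v_k]_s\to0$. One point needs care: the seminorm over $\mathbb R^{2n}$ must be controlled by the $H^s$ norm of a function supported in $\Omega$. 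Because $\partial\Omega\in C^{1,1}$, extension by zero is controlled via the $\X$-bound, or one can use interpolation $[v]_s\le C\|v\|_{L^2}^{1-s}\|\nabla v\|_{L^2}^s$ on $\mathbb R^n$ together with the compactness $v_k\to0$ in $L^2$. I will use the interpolation route, since it is cleanest. Consequently
\[
\|\nabla v_k\|_2^2=\eta(v_k)^2+o(1).
\]

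\textbf{Step 3: energy and derivative of $J$.} By Brezis--Lieb for the Choquard term (the nonlocal Brezis--Lieb lemma, which follows as in Claim 1 of Lemma \ref{1lem4.2-pos}),
\[
\iint\frac{|(u_k)_+|^{2^*_\mu}|(u_k)_+|^{2^*_\mu}}{|x-y|^\mu}=\iint\frac{|u_+|^{2^*_\mu}|u_+|^{2^*_\mu}}{|x-y|^\mu}+\iint\frac{|(v_k)_+|^{2^*_\mu}|(v_k)_+|^{2^*_\mu}}{|x-y|^\mu}+o(1).
\]
A subtlety is that $(u_k)_+-(v_k)_+\neq u_+$ in general. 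One either uses that $u\ge0$ (which holds by Remark \ref{r2p}, since $I'(u)=0$) or works with a Brezis--Lieb lemma for $t\mapsto t_+^{2^*_\mu}$, which holds because this map is $C^1$ with the right growth. Combining this with Step 2 gives
\[
J(v_k)=I(u_k)-I(u)+o(1)\to c-I(u).
\]
For the derivative, Lemma \ref{1lem4.2-pos} yields
\[
\langle J'(v_k),\varphi\rangle=\langle I'(u_k),\varphi\rangle-\langle I'(u),\varphi\rangle-\int\nabla v_k\cdot\ldots
\]
More precisely, the difference consists of the nonlocal bilinear term $\langle v_k,\varphi\rangle_s$, which is at most $[v_k]_s[\varphi]_s$ and hence tends to zero uniformly on $\eta(\varphi)\le1$, plus the Choquard remainder, which tends to zero in the dual. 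This requires the convergence in Lemma \ref{1lem4.2-pos} to be uniform in $\varphi$, i.e.\ convergence in the dual norm, which that lemma states.

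\textbf{Main obstacle.} The step I expect to be hardest is Step 2's claim $[v_k]_s\to0$ on all of $\mathbb R^{2n}$, together with the uniformity of the Choquard convergence in Step 3. The proof of Lemma \ref{1lem4.2-pos} is pointwise in $\phi$, so I may need to redo it with Hölder bounds depending only on $\eta(\phi)$.
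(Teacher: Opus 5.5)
Your proposal is correct and follows essentially the paper's own route. Both arguments use weak convergence plus HLS to get $I'(u)=0$, the Hilbert-space splitting of $\eta$, $[v_k]_s\to0$ from compactness on the bounded set $\Omega$, the nonlocal Brezis--Lieb lemma for the energy, and Lemma \ref{1lem4.2-pos} for $J'(v_k)\to0$. Your interpolation inequality is a cleaner way to get $[v_k]_s\to0$ over all of $\mathbb{R}^{2n}$, and your bound $|\langle v_k,\varphi\rangle_s|\le [v_k]_s\,\eta(\varphi)$ makes explicit the Gagliardo cross term that the paper silently puts into $o(1)$.

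The uniformity worry you flag is legitimate, since the written proof of Lemma \ref{1lem4.2-pos} is pointwise in $\phi$. However, the paper uses that lemma in exactly the same way, so this is not a gap specific to your argument. For sequences supported in the bounded set $\Omega$ the lemma does upgrade to dual-norm convergence: apply Brezis--Lieb to each of the two factors separately, then use Rellich compactness to show the two mixed products vanish in $L^{\frac{2n}{n+2}}$.
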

\begin{proof}
From the given hypothesis,
\[
I(u_k) \to c \quad \text{and} \quad I'(u_k) \to 0 \quad \text{in } \left(\mathcal{X}^{1,2}_0(\Omega)\right)' \ \text{as } k \to \infty,
\]
%it is straightforward to verify that the sequence $\{u_k\}$ is bounded in $\mathcal{X}^{1,2}_0(\Omega)$. 
The convergence
\(
u_k \rightharpoonup u ~\text{weakly in } \mathcal{X}^{1,2}_0(\Omega)
\)
 implies that, for every test function $\varphi \in \mathcal{X}^{1,2}_0(\Omega)$, we have 
\begin{equation}\label{eqwe}
\begin{aligned}
\langle u_k, \varphi \rangle_{\eta} 
&:= \int_{\mathbb{R}^n} \nabla u_k \cdot \nabla \varphi \, \mathrm{d}x 
+ \int_{\mathbb{R}^{2n}} \frac{(u_k(x) - u_k(y))(\varphi(x) - \varphi(y))}{|x-y|^{n+2s}} \, \mathrm{d}x \, \mathrm{d}y \\
&\longrightarrow \langle u, \varphi \rangle_{\eta} 
= \int_{\mathbb{R}^n} \nabla u \cdot \nabla \varphi \, \mathrm{d}x 
+ \int_{\mathbb{R}^{2n}} \frac{(u(x) - u(y))(\varphi(x) - \varphi(y))}{|x-y|^{n+2s}} \, \mathrm{d}x \, \mathrm{d}y, ~~\text{as} ~k \to \infty.
\end{aligned}
\end{equation}
Now, we claim that $I'(u) = 0$.
Note that
$
u_k \rightharpoonup u \ \text{weakly in } \mathcal{X}^{1,2}_0(\Omega)$ and convergence a.e. in $\Omega$ implies that $u_k \rightharpoonup u$ in $L^{2^*}(\Omega)$,
% as $k \to +\infty$ impl
\[
(u_k)_+^{2^*_{\mu}} \rightharpoonup u_+^{2^*_{\mu}} \quad \text{in } L^{\frac{2n}{2n-\mu}}(\Omega)
\]
and
\[
(u_k)_+^{2^*_{\mu}-1} \rightharpoonup u_+^{2^*_{\mu}-1} \quad \text{in } L^{\frac{2n}{n+2-\mu}}(\Omega)
\]
as $k \to +\infty$. Since the Riesz potential defines a continuous map from $L^{\frac{2n}{2n-\mu}}(\Omega)$ to $L^{\frac{2n}{\mu}}(\Omega)$, using the Hardy-Littlewood-Sobolev inequality together with a.e. convergence, we get
\[
\int_\Omega \frac{|(u_k)_+(y)|^{2^*_{\mu}}}{|x-y|^\mu}\,dy
\rightharpoonup
\int_\Omega \frac{|u_+(y)|^{2^*_{\mu}}}{|x-y|^\mu}\,dy
\quad \text{in } L^{\frac{2n}{\mu}}(\Omega)
\]
as $k \to +\infty$. Using these facts together, we obtain
\[
\int_\Omega \frac{((u_k)_+(y))^{2^*_{\mu}}((u_k)_+(x))^{2^*_{\mu}-1}}{|x-y|^\mu}\,dy
\rightharpoonup
\int_\Omega \frac{(u_+(y))^{2^*_{\mu}}(u_+(x))^{2^*_{\mu}-1}}{|x-y|^\mu}\,dy
\quad \text{in } L^{\frac{2n}{n+2}}(\Omega)
\]
as $k \to +\infty$. This implies for any $\varphi \in \mathcal{X}^{1,2}_0(\Omega)$, we have
\begin{equation}\label{eq:4.5}
\int_{\Omega } \int_{\Omega } \frac{ |(u_k)_+(y)|^{2^*_\mu} \varphi(x)\, |u_k(x)|^{2^*_\mu-2}(u_k)_+(x) }{ |x - y|^\mu }\, dx dy 
\to
\int_{\Omega }\int_{\Omega }  \frac{ |u_+(y)|^{2^*_\mu} |u_+(x)|^{2^*_\mu-2}u_+(x) \, \varphi(x)}{ |x - y|^\mu }\, dx dy. 
\end{equation}
Now, using \eqref{eqwe}- \eqref{eq:4.5} and for $\varphi \in \mathcal{X}^{1,2}_0(\Omega)$, we obtain
\begin{align*}
\langle I'(u_k) - I'(u), \, \varphi \rangle 
&= \int_{\mathbb{R}^n} \nabla u_k\cdot \nabla \varphi\,dx+ \int_{\mathbb{R}^{2n}}\frac{(u_k(x)-u_k(y))(\varphi(x)-\varphi(y))}{|x-y|^{n+2s}}\,dx\,dy \\
&\quad - \int_{\Omega} \int_{\Omega } \frac{ |(u_k)_+(x)|^{2^*_{\mu}-2} \, |(u_k)_+(y)|^{2^*_{\mu}} (u_k)_+ \varphi(x)}{ |x - y|^\mu }\, dx dy \\
&\quad - \int_{\mathbb{R}^n} \nabla u\cdot \nabla \varphi\,dx - \int_{\mathbb{R}^{2n}} \frac{(u(x)-u(y))(\varphi(x)-\varphi(x))}{|x-y|^{n+2s}}\,dx\,dy \\
&\quad + \int_{\Omega}\int_{\Omega }  \frac{ |u_+(x)|^{2^*_{\mu}-2} \, |u_+(y)|^{2^*_{\mu}} (u_k)_+\varphi(x)}{ |x - y|^\mu }\, dx dy.
\end{align*}
 By \eqref{eq:4.5} and the fact that $u_k \rightharpoonup u$ weakly in $\mathcal{X}^{1,2}_0(\Omega)$, the claim follows. Moreover, $u$ is a weak solution of \eqref{2}. 

\noindent Now, we define $v_k = u_k - u$, then it is easy to see by the Brezis-Lieb Lemma that the following holds:
    \[
    \eta(v_k)^2 = \eta(u_k)^2 - \eta(u)^2 + o(1).
    \] Now, we aim to show that the sequence $\{v_k\}$ forms a $(PS)$ sequence for the limiting functional $J$ at the level $c - I(u)$.
Since the embedding $\mathcal{X}^{1,2}_0(\Omega) \hookrightarrow H_0^1(\Omega)$ is continuous and $u_k \rightharpoonup u$ in $\mathcal{X}^{1,2}_0(\Omega)$, it follows that $v_k \rightharpoonup 0$ in $H_0^1(\Omega)$. This yields the following identity using the Brezis-Lieb lemma 
\begin{equation}\label{eq3.6}
    \int_{\mathbb{R}^n} |\nabla v_k|^2 
    = \int_{\Omega} |\nabla v_k|^2 
    = \int_{\Omega} |\nabla u_k|^2 - \int_{\Omega} |\nabla u|^2 + o(1), 
   \end{equation}
where $o(1)\rightarrow 0$ as $k \to \infty$.
\smallskip
In addition, employing Lemma \ref{compact_embedding} along with the Brezis-Lieb Lemma, we obtain
\begin{equation}\label{gagli}
    o(1)= [v_k]_s^{2} = [u_k]_s^{2} - [u]_s^{2} + o(1)
\end{equation}
Now, by the Brezis-Lieb lemma \cite[Lemma 2.3]{gao2016brezis}, using \eqref{gagli} \eqref{eq3.6} and Lemma \ref{1lem4.2-pos} we get the following: 
\begin{align*}
J(v_k) &= \frac{1}{2}\int_{\mathbb{R}^n} |\nabla v_k|^2\,dx 
-\frac{1}{2\cdot 2^*_{\mu}} \int_{\Omega}\int_{\Omega}  
 \frac{ |(v_k)_+(x)|^{2^*_{\mu}} |(v_k)_+(y)|^{2^*_{\mu}} }{ |x - y|^\mu }\, dx\, dy \\
&= \frac{1}{2}\int_{\mathbb{R}^n} |\nabla u_k|^2\, dx  
+ \frac{1}{2}\int_{\mathbb{R}^{2n}} \frac{|u_k(x)-u_k(y)|^2}{|x-y|^{n+2s}}\, dx\, dy  
-\frac{1}{2\cdot 2^*_{\mu}} \int_{\Omega}\int_{\Omega}
 \frac{ |(u_k)_+(x)|^{2^*_{\mu}} |(u_k)_+(y)|^{2^*_{\mu}} }{ |x - y|^\mu }\, dx\, dy \\
&\quad -\Bigg\{ \frac{1}{2}\int_{\mathbb{R}^n} |\nabla u|^2\, dx  
+ \frac{1}{2}\int_{\mathbb{R}^{2n}} \frac{|u(x)-u(y)|^2}{|x-y|^{n+2s}}\, dx\, dy  
-\frac{1}{2\cdot 2^*_{\mu}} \int_{\Omega}\int_{\Omega}
 \frac{ |u_+(x)|^{2^*_{\mu}} |u_+(y)|^{2^*_{\mu}} }{ |x - y|^\mu }\, dx\, dy \Bigg\} + o(1) \\
&= I(u_k) - I(u) + o(1) \to c - I(u), \quad \text{as } k\to\infty.
\end{align*}
Now we aim to show that $J'(v_k) \to 0$ in $(\mathcal{X}^{1,2}_0(\Omega))'$.
For $x,y \in \Omega$,  define
\[
\begin{aligned}
F_k(x,y) :=\;&
|(u_k-u)_+(x)|^{2^*_\mu - 2}\,(u_k-u)_+(x)\,
|(u_k-u)_+(y)|^{2^*_\mu} \\[1mm]
&- |(u_k)_+(x)|^{2^*_\mu - 2}\,(u_k)_+(x)\,
|(u_k)_+(y)|^{2^*_\mu} \\[1mm]
&+ |u_+(x)|^{2^*_\mu - 2}\,u_+(x)\,
|u_+(y)|^{2^*_\mu}.
\end{aligned}
\]
Then, we define the nonlocal term
\[
\mathcal{N}_k(\varphi) := \int_{\Omega} \int_{\Omega} \frac{F_k(x,y) \, \varphi(x)}{|x-y|^\mu} \, dx \, dy.
\]
 By Lemma \ref{1lem4.2-pos}, for any $\varphi \in \mathcal{X}^{1,2}_0(\Omega)$,
\begin{align*}
\langle J'(u_k-u), \varphi \rangle
&= \int_{\mathbb{R}^n} \nabla (u_k-u) \cdot \nabla \varphi \, dx  \\
&\quad - \int_{\Omega} \int_{\Omega}
\frac{|(u_k-u)_+(x)|^{2^*_\mu - 2}
|(u_k-u)_+(y)|^{2^*_\mu}
(u_k-u)_+(x)\,\varphi(x)}
{|x-y|^\mu} \, dx \, dy \\[1mm]
&= \int_{\mathbb{R}^n} \nabla u_k \cdot \nabla \varphi \, dx
- \int_{\mathbb{R}^n} \nabla u \cdot \nabla \varphi \, dx \\[1mm]
&\quad - \int_{\Omega} \int_{\Omega}
\frac{|(u_k)_+(x)|^{2^*_\mu - 2}
|(u_k)_+(y)|^{2^*_\mu}
(u_k)_+(x)\,\varphi(x)}
{|x-y|^\mu} \, dx \, dy \\[1mm]
&\quad + \int_{\Omega} \int_{\Omega}
\frac{|u_+(x)|^{2^*_\mu - 2}
|u_+(y)|^{2^*_\mu}
u_+(x)\,\varphi(x)}
{|x-y|^\mu} \, dx \, dy
- \mathcal{N}_k(\varphi)
+ o(1) \\[1mm]
&= \langle I'(u_k), \varphi \rangle
- \langle I'(u), \varphi \rangle
+ o(1) = o(1).
\end{align*}
since $\{u_k\}$ is a P.S. sequence and $u$ is a weak solution.
% \[
% \langle J'(v_k),\, \varphi \rangle = 
% \left( \langle I'(u_k),\, \varphi \rangle - \langle I'(u),\, \varphi \rangle \right) + o(1) \to 0.
% \]
Hence, $J'(v_k) \to 0$ in $(\mathcal{X}^{1,2}_0(\Omega))'$, due to the  claim proved above. Moreover, using Lemma~\ref{compact_embedding}, we can easily see that $\{v_k\}$ is indeed a $(PS)$ sequence for $J$ at the level $c-I(u)$ on $H^1_0(\Omega)$.
\end{proof}
\QED

%%%%%%%%%%%%%%%%%%%%%%%%%%%
We now discuss the concentration profile of the sequence $\{u_k\}$.
\begin{Lemma}\label{lem4.4}
Let $\{y_k\} \subset \Omega$ and $\{\lambda_k\} \subset (0,\infty)$ satisfy
\(
\frac{1}{\lambda_k}\,\operatorname{dist}(y_k,\partial\Omega)\to\infty
\), as $k\to \infty.$
Assume a sequence $\{u_k\}\in \mathcal{X}^{1,2}_0(\Omega)$ and its rescaled version
\[
f_k(x)=\lambda_k^{\frac{n-2}{2}}u_k(\lambda_k x+y_k)
\]
is such that for some $f\in D^{1,2}(\mathbb{R}^n)$,
$f_k\rightharpoonup f$ weakly in $D^{1,2}(\mathbb{R}^n)$,
$f_k\to f$ a.e. in $\mathbb{R}^n$,
$J(u_k)\to c$ in $\mathbb R$, and $J'(u_k)\to0$ in $(\mathcal{X}^{1,2}_0(\Omega) )'$ as $k\to \infty$. Then $J'(f)=0$. Moreover, defining
\[
z_k(x)=u_k(x)-\lambda_k^{-\frac{n-2}{2}}f\!\left(\frac{x-y_k}{\lambda_k}\right),
\]
one has
\begin{enumerate}
\item $J(z_k)\to c-J(f)$ as $k\to \infty$,
    \item $J'(z_k)\to0$ in $(\mathcal{X}^{1,2}_0(\Omega) )'$ as $k\to \infty$.
\end{enumerate}
\end{Lemma}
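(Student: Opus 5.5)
The plan is to work entirely at the rescaled level and use the invariance of $J$ under the dilation--translation group of $D^{1,2}(\mathbb{R}^n)$. The norm $\|\nabla\cdot\|_{L^2(\mathbb{R}^n)}$ and the Choquard double integral over $\mathbb{R}^n\times\mathbb{R}^n$ are both invariant under $w\mapsto \lambda^{\frac{n-2}{2}}w(\lambda x+y)$. This holds because $2\cdot 2^*_\mu\cdot\frac{n-2}{2}=2n-\mu$ exactly compensates the Jacobian $\lambda^{-2n}$ and the kernel factor $\lambda^{\mu}$. Hence $J(f_k)=J(u_k)\to c$, where $u_k$ is extended by zero so that the integrals over $\Omega$ coincide with those over $\mathbb{R}^n$.

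For the derivative, write $\Omega_k:=(\Omega-y_k)/\lambda_k$. For a test function $\psi\in\mathcal{X}^{1,2}_0(\Omega_k)$, the function $\varphi(x)=\lambda_k^{-\frac{n-2}{2}}\psi((x-y_k)/\lambda_k)$ satisfies $\langle J'(f_k),\psi\rangle=\langle J'(u_k),\varphi\rangle$ and $\|\nabla\varphi\|_2=\|\nabla\psi\|_2$. Note that the Gagliardo part of $\eta(\varphi)$ scales like $\lambda_k^{1-s}[\psi]_s$, which stays bounded if $\lambda_k$ is bounded. Since $\operatorname{dist}(y_k,\partial\Omega)/\lambda_k\to\infty$ and $\Omega$ is bounded, we also get $\lambda_k\to0$.

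\textbf{Step 1: $J'(f)=0$.} Fix $\psi\in C_c^\infty(\mathbb{R}^n)$. Since $\operatorname{dist}(y_k,\partial\Omega)/\lambda_k\to\infty$, the ball $B_{\operatorname{dist}(y_k,\partial\Omega)/\lambda_k}(0)\subset\Omega_k$ eventually contains $\operatorname{supp}\psi$. Then $\langle J'(f_k),\psi\rangle\to0$. Passing to the limit uses two facts: the gradient term converges by weak convergence in $D^{1,2}$, and the Choquard term converges by the weak-convergence arguments already used in the proof of Lemma \ref{lem4.2}. Those arguments need only boundedness in $L^{2^*}$ plus a.e. convergence, which give $(f_k)_+^{2^*_\mu}\rightharpoonup f_+^{2^*_\mu}$ in $L^{\frac{2n}{2n-\mu}}$, then HLS, then $(f_k)_+^{2^*_\mu-1}\rightharpoonup f_+^{2^*_\mu-1}$. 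So $\langle J'(f),\psi\rangle=0$, and by density $J'(f)=0$ in $(D^{1,2}(\mathbb{R}^n))'$.

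\textbf{Step 2: the energy splitting.} Set $w_k:=f_k-f$, so that $z_k$ is precisely the inverse rescaling of $w_k$ and $J(z_k)=J(w_k)$. Weak convergence gives $\|\nabla w_k\|_2^2=\|\nabla f_k\|_2^2-\|\nabla f\|_2^2+o(1)$. The Brezis--Lieb lemma for the Choquard term \cite[Lemma 2.3]{gao2016brezis}, applied with positive parts as in Lemma \ref{1lem4.2-pos}, splits the nonlocal term in the same way. Therefore $J(z_k)=J(f_k)-J(f)+o(1)\to c-J(f)$.

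\textbf{Step 3: the derivative of the remainder.} Lemma \ref{1lem4.2-pos} (valid in $D^{1,2}$ by the same proof) gives
\[
J'(w_k)=J'(f_k)-J'(f)+o(1)=J'(f_k)+o(1)
\]
in the dual sense, uniformly over bounded test functions. Scaling back, for $\varphi\in\mathcal{X}^{1,2}_0(\Omega)$ with $\eta(\varphi)\le1$, the function $\psi$ obtained by inverse rescaling has $\|\nabla\psi\|_2\le1$. This gives
\[
\langle J'(z_k),\varphi\rangle=\langle J'(f_k),\psi\rangle+o(1)=\langle J'(u_k),\varphi\rangle+o(1)\to0 .
\]

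\textbf{Main obstacle.} The delicate step is this last uniformity, and there is a subtlety: $f$ itself is not supported in $\Omega_k$, so $z_k\notin\mathcal{X}^{1,2}_0(\Omega)$ in general. I would handle this by the standard Struwe device: replace $f$ by $\chi(\lambda_k x)f$, with $\chi$ a cutoff equal to $1$ near the origin and supported in $\Omega_k$ asymptotically. The error tends to zero in $D^{1,2}$ because the support radius $\operatorname{dist}(y_k,\partial\Omega)/\lambda_k\to\infty$. I would also check that the uniform-in-$\psi$ convergence in Lemma \ref{1lem4.2-pos} holds; its proof gives convergence for each fixed $\phi$, and uniformity over the unit ball follows since the bounds there depend on $\phi$ only through $\|\phi\|_{L^{2^*}}$ together with equi-integrability.
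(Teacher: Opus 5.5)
Your outline follows the paper's route. It uses the scaling invariance of $J$, rescaled test functions supported in $B_{\operatorname{dist}(y_k,\partial\Omega)/\lambda_k}(0)\subset\Omega_k$ to get $J'(f)=0$, the Brezis--Lieb splitting for $J(z_k)$, and Lemma~\ref{1lem4.2-pos} for $J'(z_k)$. In two places you improve on the paper. Working with $w_k=f_k-f$ in rescaled variables is cleaner than the paper, which applies Lemma~\ref{1lem4.2-pos} with the $k$-dependent function $\lambda_k^{\frac{2-n}{2}}f\big(\tfrac{\cdot-y_k}{\lambda_k}\big)$ playing the role of a fixed weak limit. Your observation that the Gagliardo part of $\eta(\varphi_k)$ equals $\lambda_k^{1-s}[\psi]_s$ corrects the paper's bound by $\|\nabla\varphi_k\|_2$.

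\textbf{The gap: uniformity in Step~3.} You need it to get $J'(z_k)\to0$ in norm, and your justification does not hold. The proof of Lemma~\ref{1lem4.2-pos} does not see $\phi$ only through $\|\phi\|_{L^{2^*}}$: the Vitali argument in Claim~2 gets equi-integrability and tightness from a fixed $\phi$, and $\{|\phi|^{2^*}:\|\nabla\phi\|_2\le1\}$ is not equi-integrable. In fact the terms $I_2,I_3,I_4$ of that proof are not uniformly small. Take $f\ge0$ and $f_k=f+\beta_k$ with $\beta_k(x)=\varepsilon_k^{\frac{2-n}{2}}u_0(x/\varepsilon_k)$, $\varepsilon_k\to0$, and test with $\phi_k=\beta_k/\|\nabla u_0\|_2$. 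Then $I_4(\phi_k)=\|u_0\|_0^{2\cdot2^*_\mu}/\|\nabla u_0\|_2$ for every $k$, and $I_2(\phi_k),I_3(\phi_k)\ge I_4(\phi_k)$. Only the combination $I_2+I_3-2I_4$ is uniformly small, so no term-by-term estimate from that proof gives dual-norm convergence.

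\textbf{What replaces it.} You need the norm statement
\[
N(f_k)-N(f_k-f)-N(f)\to0\ \text{ in } L^{\frac{2n}{n+2}}(\mathbb{R}^n),\qquad N(v):=\big(|x|^{-\mu}*|v_+|^{2^*_\mu}\big)|v_+|^{2^*_\mu-2}v_+ ,
\]
which suffices because $L^{\frac{2n}{n+2}}\hookrightarrow(D^{1,2}(\mathbb{R}^n))'$. Prove it from two Brezis--Lieb statements: one for $t\mapsto t_+^{2^*_\mu}$ in $L^{\frac{2n}{2n-\mu}}$, followed by HLS, and one for $t\mapsto t_+^{2^*_\mu-1}$ in $L^{\frac{2n}{n+2-\mu}}$. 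After H\"older, everything reduces to the two cross terms $\big(|x|^{-\mu}*|f_+|^{2^*_\mu}\big)|(f_k-f)_+|^{2^*_\mu-1}$ and $\big(|x|^{-\mu}*|(f_k-f)_+|^{2^*_\mu}\big)|f_+|^{2^*_\mu-1}$. Each tends to $0$ in $L^{\frac{2n}{n+2}}$. To see this, approximate the fixed factor by a bounded compactly supported function and use $f_k-f\to0$ in $L^r_{\mathrm{loc}}$ for $r<2^*$. For the second term, also split $|(f_k-f)_+|^{2^*_\mu}$ into its part on a large ball and its tail. The paper only tests against a fixed $\varphi$, so it leaves the same hole; you should replace your uniformity claim with this argument.

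\textbf{Two smaller points.} First, in Step~1 a product of two weakly convergent factors does not pass to the limit. What works is that $(f_k)_+^{2^*_\mu-1}\psi\to f_+^{2^*_\mu-1}\psi$ strongly in $L^{\frac{2n}{2n-\mu}}$ for compactly supported $\psi$, by Vitali, since $(2^*_\mu-1)\frac{2n}{2n-\mu}<2^*$. Second, your cutoff must live at scale $\operatorname{dist}(y_k,\partial\Omega)/\lambda_k$, not $1/\lambda_k$, because $\operatorname{dist}(y_k,\partial\Omega)$ may tend to $0$. For the lemma as stated no cutoff is needed at all, since $J'(z_k)\in(D^{1,2}(\mathbb{R}^n))'$ simply restricts to $\mathcal{X}^{1,2}_0(\Omega)$.
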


\begin{proof} (1)
For any $\varphi \in C_c^\infty(\mathbb{R}^n)$, define
\[
\varphi_k(x) := \lambda_k^{\frac{2-n}{2}}\,\varphi\!\left(\frac{x-y_k}{\lambda_k}\right).
\]
If $\varphi \in C_c^\infty(B_r)$, then for all sufficiently large $k$ we have $\varphi_k \in C_c^\infty(\Omega)$. 
It implies 
\begin{align}\label{scalingp}
	\ld J^\prime(f_k), \varphi \rd =& \int_{\mathbb{R}^n} \na f_k\cdot\na \varphi \,dx 
	-\I{\mathbb{R}^n}\I{\mathbb{R}^n}\frac{|(f_k)_+(y)|^{2_\mu^*}|(f_k)_+(x)|^{2_\mu^*-1}\varphi(x)}{|x-y|^\mu}dxdy\\\nonumber
    =& \int_{\mathbb{R}^n} \na u_k\cdot\na \varphi_k \,dx 
	-\I{\mathbb{R}^n}\I{\mathbb{R}^n}\frac{|(u_k)_+(y)|^{2_\mu^*}|(u_k)_+(x)|^{2_\mu^*-1}\varphi_k(x)}{|x-y|^\mu}dxdy\\\nonumber
    =& \ld J^\prime(u_k), \varphi_k \rd.
\end{align} 
It follows that
\begin{equation}\label{new1}
\big|\, \langle J'(f_k),\, \varphi \rangle\,\big|
= \left|\, \langle J'(u_k),\, \varphi_k \rangle\,\right|
\leq \|J'(u_k)\|_{(\mathcal{X}^{1,2}_0(\Omega) )'}\, \|\nabla \varphi_k\|_{L^2(\Omega)} = \|J'(u_k)\|_{(\mathcal{X}^{1,2}_0(\Omega ))'}\, \|\nabla \varphi\|_{L^2(\Omega)} \to 0
    \end{equation}
    as $k\to \infty$, since $J'(u_k) \to 0$ as $k\to \infty$ in $(\mathcal{X}^{1,2}_0(\Omega) )'$.
Hence,$J'(f_k) \to 0$ as $k \to \infty$ in $(\mathcal{X}^{1,2}_0(B_r))'$ for each $r>0$ (since $r$ is arbitrary).
Now, we aim to show that $J'(f) = 0$.
For any $\varphi \in C_c^\infty(\mathbb{R}^n)$, we easily get that $\varphi \in C_c^\infty(B_r)$ for some $r$. Using
\(
\frac{1}{\lambda_k} \operatorname{dist}(y_k, \partial\Omega) \to \infty,
\)
together with $J'(f_k) \to 0$ in $(\mathcal{X}^{1,2}_0(B_r))'$ as $k\to \infty$ and following the same reasoning as in the proof of Lemma \ref{lem4.2}, due to \eqref{new1} we obtain 
\[
\langle J'(f_k) - J'(f),\, \varphi \rangle \to 0,
\]
which establishes that $J'(f) = 0$ via density arguments.
Now, using Lemma \ref{1lem4.2-pos}, and assuming that
\[
f_k \rightharpoonup f \quad \text{in } D^{1,2}(\mathbb{R}^n),
\qquad
f_k \to f \quad \text{a.e. in } \mathbb{R}^n,
\]
we obtain the following decomposition
\begin{align*}
\int_{\mathbb{R}^n}\int_{\mathbb{R}^n}
\frac{|(f_k)_+(x)|^{2^*_\mu}|(f_k)_+(y)|^{2^*_\mu}}{|x-y|^\mu}
\,dx\,dy
&=
\int_{\mathbb{R}^n}\int_{\mathbb{R}^n}
\frac{|f_+(x)|^{2^*_\mu}|f_+(y)|^{2^*_\mu}}{|x-y|^\mu}
\,dx\,dy \\
&\quad +
\int_{\mathbb{R}^n}\int_{\mathbb{R}^n}
\frac{|(f_k-f)_+(x)|^{2^*_\mu}|(f_k-f)_+(y)|^{2^*_\mu}}{|x-y|^\mu}
\,dx\,dy + o(1),
\end{align*}
where the term $o(1)$ denotes tends to $0$ as $k \to \infty$.
Again, by the classical Brezis-Lieb lemma \cite{gao2016brezis}, we have the following
\begin{equation}\label{BL-grad}
\int_{\mathbb{R}^n} |\nabla f_k|^2 \, dx
=
\int_{\mathbb{R}^n} |\nabla (f_k - f)|^2 \, dx
+
\int_{\mathbb{R}^n} |\nabla f|^2 \, dx
+ o(1).
\end{equation}
Therefore, using the definition of the functional $J$, we compute
\begin{align*}
J(f_k)
&=
\frac12 \int_{\mathbb{R}^n} |\nabla f_k|^2 \, dx
-
\frac{1}{2\cdot 2^*_\mu}
\int_{\mathbb{R}^n}\int_{\mathbb{R}^n}
\frac{|(f_k)_+(x)|^{2^*_\mu}|(f_k)_+(y)|^{2^*_\mu}}{|x-y|^\mu}
\,dx\,dy \\
&=
\frac12
\Bigl(
\int_{\mathbb{R}^n} |\nabla (f_k-f)|^2 \, dx
+
\int_{\mathbb{R}^n} |\nabla f|^2 \, dx
\Bigr) \\
&\quad -
\frac{1}{2\cdot 2^*_\mu}
\Biggl[
\int_{\mathbb{R}^n}\int_{\mathbb{R}^n}
\frac{|(f_k-f)_+(x)|^{2^*_\mu}|(f_k-f)_+(y)|^{2^*_\mu}}{|x-y|^\mu}
\,dx\,dy \\
&\qquad\qquad +
\int_{\mathbb{R}^n}\int_{\mathbb{R}^n}
\frac{|f_+(x)|^{2^*_\mu}|f_+(y)|^{2^*_\mu}}{|x-y|^\mu}
\,dx\,dy
\Biggr]
+ o(1).
\end{align*}
Consequently, we obtain the energy decomposition
\[
J(f_k) = J(f_k - f) + J(f) + o(1).
\]
Letting $z_k := f_k - f$. Since $J(u_k) = J(f_k)$ and $J(u_k) \to c$, it follows that
\[
J(z_k)
=
J(f_k) - J(f) + o(1)
\longrightarrow c - J(f).
\]
(2) 
Recall that
\[
z_k=u_k-w_k,
\qquad
w_k:=\lambda_k^{\frac{2-n}{2}}
f\!\left(\frac{\cdot-y_k}{\lambda_k}\right).
\]
For any $\varphi\in \mathcal{X}^{1,2}_0(\Omega) $, by the definition of $J'$ we write
\begin{align*}
\langle J'(z_k),\varphi\rangle
&=
\int_{\mathbb R^n}\nabla(u_k-w_k)\cdot\nabla\varphi\,dx \\
&\quad-
\int_{\mathbb R^n}
\Big(|x|^{-\mu}*|(u_k-w_k)_+|^{2^*_\mu}\Big)
|(u_k-w_k)_+|^{2^*_\mu-2}(u_k-w_k)_+\,\varphi\,dx .
\end{align*}
By definition of $z_k$, we obtain
\begin{align*}
\int_{\mathbb R^n}\nabla z_k\cdot\nabla\varphi\,dx
&=
\int_{\mathbb R^n}\nabla u_k\cdot\nabla\varphi\,dx
-
\int_{\mathbb R^n}\nabla w_k\cdot\nabla\varphi\,dx .
\end{align*}
Moreover, by Lemma \ref{1lem4.2-pos} (Brezis-Lieb type decomposition for the Choquard
nonlinearity), we have 
\begin{align*}
&\Big(|x|^{-\mu}*|(u_k-w_k)_+|^{2^*_\mu}\Big)
|(u_k-w_k)_+|^{2^*_\mu-2}(u_k-w_k)_+ \\
&\qquad=
\Big(|x|^{-\mu}*|(u_k)_+|^{2^*_\mu}\Big)
|(u_k)_+|^{2^*_\mu-2}u_k \\
&\qquad\quad-
\Big(|x|^{-\mu}*|(w_k)_+|^{2^*_\mu}\Big)
|(w_k)_+|^{2^*_\mu-2}w_k
+o(1).
\end{align*}
Testing the above identity with $\varphi$ and combining the previous
estimates, we deduce
\begin{align*}
\langle J'(z_k),\varphi\rangle
&=
\Big[
\int_{\mathbb R^n}\nabla u_k\cdot\nabla\varphi\,dx
-
\int_{\mathbb R^n}
\big(|x|^{-\mu}*|(u_k)_+|^{2^*_\mu}\big)
|(u_k)_+|^{2^*_\mu-2}(u_k)_+\,\varphi\,dx
\Big] \\
&\quad-
\Big[
\int_{\mathbb R^n}\nabla w_k\cdot\nabla\varphi\,dx
-
\int_{\mathbb R^n}
\big(|x|^{-\mu}*|(w_k)_+|^{2^*_\mu}\big)
|(w_k)_+|^{2^*_\mu-2}(w_k)_+\,\varphi\,dx
\Big]
+o(1),
\end{align*}
that is,
\[
\langle J'(z_k),\varphi\rangle
=
\langle J'(u_k),\varphi\rangle
-
\langle J'(w_k),\varphi\rangle
+o(1),
\]
also we can write
\[
\langle J'(z_k),\varphi\rangle = 
\langle J'(u_k) - J'\bigl(\lambda_k^{\frac{2-n}{2}} f\bigl(\tfrac{\cdot-y_k}{\lambda_k}\bigr)\bigr),\,\varphi\rangle + o(1).
\]
Since the rescaling $\lambda_k^{\frac{2-n}{2}} f\bigl(\tfrac{\cdot-y_k}{\lambda_k}\bigr)$ preserves the form $J'$ as in \eqref{scalingp} and using $J'(f)=0$ on $\mathbb{R}^n$, it follows that 
\[
J'\bigl(\lambda_k^{\frac{2-n}{2}} f\bigl(\tfrac{\cdot-y_k}{\lambda_k}\bigr)\bigr)=0
\]
in $(\mathcal{X}^{1,2}_0(\Omega))'$ for any $k$. 
Hence,
\[
\langle J'(z_k),\varphi\rangle = \langle J'(u_k),\varphi\rangle + o(1) = o(1),
\]
which implies $J'(z_k) \to 0$ in $(\mathcal{X}^{1,2}_0(\Omega))'$, since $ J'(u_k) \to 0$, by assumption.
\end{proof}\QED

%%%%%%%%%%%%%%%%%%%%%%%%%%%%
Before establishing the global compactness result (Theorem\ref{PS_decomposition}) for the Choquard equation, we recall the definition of \emph{Morrey spaces} $\mathcal{L}^{(r,\gamma)}$, introduced by Morrey, giving a refinement of the usual Lebesgue spaces. 

\begin{Definition}\textbf{[Morrey Spaces]}\label{Ms}
A measurable function $u : \mathbb{R}^n \to \mathbb{R}$ belongs to the Morrey space $\mathcal{L}^{(r,\gamma)}(\mathbb{R}^n)$, with $r \in [1,\infty)$ and $\gamma \in [0,n]$, if and only if
\begin{equation}\label{eq:morrey-norm}
\|u\|_{\mathcal{L}^{r,\gamma}(\mathbb{R}^n)}^r
:= \sup_{R>0,\, x\in\mathbb{R}^n} R^{{\gamma}-n} \int_{B_R(x)} |u(y)|^r \,dy < \infty.
\end{equation}
Moreover,  we have the continuous embedding 
\[
L^{2^*}(\mathbb{R}^n) \hookrightarrow L^{(2,\,n-2)}(\mathbb{R}^n).
\]
\end{Definition}
%%%%%%%%%%%%%%%%%%%%%%%%%%%%%%%%%

Now, with the help of Lemmas \ref{lem4.2} and \ref{lem4.4}, we can give the proof of our first main result, Theorem \ref{PS_decomposition}.

\textbf{Proof of Theorem \ref{PS_decomposition}:}
We divide the proof into several steps.

\textbf{Step 1.}  
We first show that the Palais-Smale sequence $\{u_k\}$ is bounded. 
Let $\{u_k\}\subset \mathcal{X}^{1,2}_0(\Omega)$ be such that $I(u_k)\to c$ and $I'(u_k)\to 0$. 
Then, by the definition of $I$,
\[
I(u_k) - \frac{1}{2\cdot 2^*_{\mu}} \langle I'(u_k), u_k\rangle 
= \left(\frac{1}{2} - \frac{1}{2\cdot 2^*_{\mu}}\right)\eta(u_k)^2.
\]
Since $\{I(u_k)\}$ is bounded and $\langle I'(u_k), u_k\rangle \to 0$, it follows that
\[
\left(\frac{1}{2} - \frac{1}{2\cdot 2^*_{\mu}}\right)\eta(u_k)^2 \leq C + o(1),
\]
for some constant $C>0$. 
Since $2^*_{\mu} > 1$, $\left(\tfrac{1}{2} - \tfrac{1}{2\cdot 2^*_{\mu}}\right)>0$. 
 Hence, $\{u_k\}$ is bounded in $\mathcal{X}^{1,2}_0(\Omega) $. It implies that there exists a $v_0 \in \mathcal{X}^{1,2}_0(\Omega) $ such that $u_k \rightharpoonup v_0$ weakly in $\mathcal{X}^{1,2}_0(\Omega) $ and $u_k \to v_0$ a.e. in $\Omega$. By Lemma \ref{lem4.2}, $I'(v_0) = 0$. Set $u_k^1 := u_k - v_0$. Then
\begin{equation}\label{eq411}
\eta(u_k^1)^2 = \eta(u_k)^2 - \eta(v_0)^2 + o(1), \qquad J(u_k^1) \to c - I(v_0), \qquad J'(u_k^1) \to 0 \text{ in } (\mathcal{X}^{1,2}_0(\Omega) )'.
    \end{equation}
Moreover, since $\{u_k\}$ is bounded, $v_0 \in \mathcal{X}^{1,2}_0(\Omega) $ and
\[
\eta(u_k^1)^2 = \eta(u_k)^2 - \eta(v_0)^2 + o(1),
\]
one gets that the sequence $\{u_k^1\}$ is also bounded in $\mathcal{X}^{1,2}_0(\Omega) $. Therefore, there exists a constant $C_1 > 0$ such that
\begin{equation}\label{eno}
\eta(u_k^1) < C_1 \quad \text{for all } k \in \mathbb{N}.
    \end{equation}
\vspace{0.1em}
\noindent\textbf{Step 2.} In this step 2, we aim to detect the first concentration point and extract a non-trivial bubble profile $v_1$ when the sequence $u^1_k$ fails to converge strongly to zero.

Now suppose
\[
\int_{\Omega} \int_{\Omega} \frac{|(u_k^1)_+(x)|^{2^*_{\mu}} |(u_k^1)_+(y)|^{2^*_{\mu}}}{|x - y|^\mu} \, dx dy \to 0.
\]
Then, using the energy expansion and the fact that $J'(u_k^1) \to 0$, we obtain
  $u_k^1 \to 0$ in $\mathcal{X}^{1,2}_0(\Omega) $ and
 we are done. On the other hand, if
\[
\int_{\Omega} \int_{\Omega}  \frac{ |(u_k^1)_+(x)|^{2^*_{\mu}} |(u_k^1)_+(y)|^{2^*_{\mu}}}{ |x - y|^\mu } dx dy \not\to 0, ~~\text{as}~~k\to \infty,
\]
then we may assume up to a subsequence that
\begin{equation}\label{eqN}
\int_{\Omega} \int_{\Omega}  \frac{ |(u_k^1)_+(x)|^{2^*_{\mu}} |(u_k^1)_+ (y)|^{2^*_{\mu}}}{ |x - y|^\mu } dx dy > \delta
    \end{equation}
for some $\delta > 0$.
This, together with the Hardy–Littlewood–Sobolev inequality, gives $\|u_k^1\|_{L^{2^*}(\Omega)} > \delta_1$ for all $k$ and for an appropriate constant $\delta_1>0$. Taking into account that $u_k^1$ is a bounded sequence in $L^{2^*}(\mathbb{R}^n)$ and $L^{2^*}(\mathbb{R}^n) \hookrightarrow L^{(2,n-2)}(\mathbb{R}^n)$, and the Gagliardo-Nirenberg type inequality [\cite{palatucci2014improved}, Theorem 1.2], we obtain
\[
b < \|u_k^1\|_{L^{(2,n-2)}(\mathbb{R}^n)} < a \quad \text{for all } k.
\]
Thus, we can observe that there exists a positive constant $C$ small enough such that for all $k$, we get
\begin{equation}\label{eq410}
C < \|u_k^1\|_{L^{(2,n-2)}(\mathbb{R}^n)} < C^{-1}.
\end{equation}
By the definition of Morrey spaces, for $r = 2$ and $\gamma = n - 2$, we have
\[
\|u_k^1\|_{\mathcal{L}^{(2,n-2)}(\mathbb{R}^n)}^2 = \sup_{x \in \mathbb{R}^n,\, R > 0} R^{-2} \int_{B_R(x)} |u_k^1(y)|^2 \, dy.
\]
Since $\|u_k^1\|_{\mathcal{L}^{(2,n-2)}(\mathbb{R}^n)}$ is uniformly bounded below as in \eqref{eq410}, it follows that for each $k \in \mathbb{N}$ there exist points $y_k^1 \in \mathbb{R}^n$ and radii $\lambda_k^1 > 0$ such that
\begin{equation}\label{eq:concentration}
0 < \widehat{C} < \|u_k^1\|_{\mathcal{L}^{(2,n-2)}(\mathbb{R}^n)}^2 - \frac{C^2}{2k} < (\lambda_k^1)^{-2} \int_{B_{\lambda_k^1}(y_k^1)} |u_k^1(y)|^2 \, dy,
\end{equation}
for some constant $\widehat{C} > 0$ independent of $k$.
 Now, define the rescaled function
\[
f_k^1(x) := (\lambda_k^1)^{\frac{n - 2}{2}} u_k^1(\lambda_k^1 x + y_k^1).
\]
Since $\|\nabla f^1_k\|_{L^2(\mathbb{R}^n)}\leq \eta (u^1_k)$ and using \eqref{eno}, we have $\|\nabla f^1_k\|_{L^2(\mathbb{R}^n)}< C$, independently of $k\in\mathbb{N}$. Up to a subsequence, one has that $f^1_k \rightharpoonup v_1$ weakly in $D^{1,2}(\mathbb{R}^n)$ and$f^1_k \to v_1$ a.e. in $\mathbb{R}^n.$ Furthermore, changing variables in the integral over $B_1(0)$, we compute
\[
\int_{B_1(0)} |f_k^1(x)|^2 dx 
= (\lambda_k^1)^{n - 2} \int_{B_1(0)} |u_k^1(\lambda_k^1 x + y_k^1)|^2 dx.
\]
Now let $y = \lambda_k^1 x + y_k^1$ so that $dx = (\lambda_k^1)^{-n} dy$ and $x \in B_1(0) \iff y \in B_{\lambda_k^1}(y_k^1)$. Thus,
\[
\int_{B_1(0)} |f_k^1(x)|^2 dx 
= (\lambda_k^1)^{-2} \int_{B_{\lambda_k^1}(y_k^1)} |u_k^1(y)|^2 dy > \widehat{C}.
\]
Since $D^{1,2}(\mathbb{R}^n) \hookrightarrow L^2_{\mathrm{loc}}(\mathbb{R}^n)$ is compact, we have $\int_{B_1(0)} |v_1|^2 dx > \widehat{C} > 0$. It implies that $v_1 \neq 0$.
\vspace{2ex}

\noindent\textbf{Step 3.} We claim that \( \lambda_k^1 \to 0 \) and \( y_k^1 \to y_0 \in \overline{\Omega} \). Suppose, by contradiction, that \( \lambda_k^1 \to \infty \) as \( k \to \infty \). Since \( \{u_k^1\} \subset \mathcal{X}^{1,2}_0(\Omega) \) is bounded, it is also bounded in \( L^2(\Omega) \). 
Define the rescaled domains
\[
\Omega_k := \frac{\Omega - y_k^1}{\lambda_k^1},
\]
so that the change of variable \( y = \lambda_k^1 x + y_k^1 \) yields
\[
\int_{\Omega_k} |f_k^1(x)|^2\, dx 
= {(\lambda_k^1)^{-2}} \int_{\Omega} |u_k^1(y)|^2\, dy 
\leq {C}{(\lambda_k^1)^{-2}} \to 0 \quad \text{as } k \to \infty.
\]
On the other hand, by Fatou's lemma, we get,
\[
0 = \liminf_{k \to \infty} \int_{\Omega_k} |f_k^1(x)|^2\, dx 
\geq \int_{ \Omega_k} |v_1(x)|^2\, dx
\]
 which implies $v_1\equiv 0$ in $\Omega_k$. Since \( v_1 \not\equiv 0 \) by Step 2, the above yields a contradiction. Hence, the sequence \( \{\lambda_k^1\} \) must be bounded in \( \mathbb{R}^+ \), and up to a subsequence, there exists \( \lambda_*^1 \in [0, \infty) \) such that \( \lambda_k^1 \to \lambda_*^1 \) as $k \to \infty$.
 We now claim that \( \{y_k^1\} \) is bounded in \( \overline{\Omega} \). Suppose, by contradiction, that \( |y_k^1| \to \infty \) as \( k \to \infty \). Then, for any fixed \( x \in \mathbb{R}^n \), the point \( \lambda_k^1 x + y_k^1 \notin \overline{\Omega} \) for all large \( k \).
 %because \( \lambda_k^1 x \) remains bounded while \( y_k^1 \) escapes to $\infty$.
Since $u_k \in \mathcal{X}^{1,2}_0(\Omega) $, it follows that for large enough $k$, $u_k^1(\lambda_k^1 x + y_k^1) = 0$ for all $x \in \Omega$, which contradicts \eqref{eqN}.
Therefore, the sequence $\{y_k^1\}$ is bounded, and up to a subsequence, we have $y_k^1 \to y_0^1 \in \mathbb{R}^n$. Now suppose, for contradiction, that $\lambda_k^1 \to \lambda_*^1 > 0$. Then the rescaled domains
\[
\Omega_k := \frac{\Omega - y_k^1}{\lambda_k^1} \to \frac{\Omega - y_0^1}{\lambda_*^1} =: \Omega_0 \subsetneq \mathbb{R}^n.
\]
Now, using the fact that $u_k^1 \rightharpoonup 0$ weakly in $\mathcal{X}^{1,2}_0(\Omega) $, it follows that $f_k^1 \rightharpoonup 0$ weakly in $D^{1,2}(\mathbb{R}^n)$. However, this contradicts the fact that $f_k^1 \to v_1 \not\equiv 0$ in $D^{1,2}(\mathbb{R}^n)$ (as shown in Step 2). Therefore, the assumption must be false, and we conclude that
\[
\lambda_k^1 \to 0 \quad \text{as } k \to \infty.
\]
We now argue by contradiction and assume that
\begin{equation}\label{4.11}
y_0^1 \notin \overline{\Omega}.
\end{equation}
Note that for any fixed $x \in \Omega$, we have $\lambda_k^1 x + y_k^1 \to y_0^1$ as $k \to \infty$. In view of \eqref{4.11}, it follows that for all $x \in \Omega$, there exists $k_0 \in \mathbb{N}$ such that for all $k \geq k_0$, we have
\(
\lambda_k^1 x + y_k^1 \notin \overline{\Omega}.
\)
Since $u_k^1 \in \mathcal{X}^{1,2}_0(\Omega) $, this implies that $u_k^1(\lambda_k^1 x + y_k^1) = 0$ for all $x \in \Omega$ and $k$ large enough. Consequently, $f_k^1(x) = 0$ for all $x \in \Omega$ and large $k$, which contradicts the fact that $f_k^1 \to v_1 \not\equiv 0$. 

Therefore, our assumption \eqref{4.11} must be false, and we conclude that
\(
y_0^1 \in \overline{\Omega}.
\)
This completes the proof of the claim and Step 3.
\vspace{1em}

\noindent\textbf{Step 4.} Assume that
\[
\lim_{k \to \infty} \frac{1}{\lambda_k^1} \operatorname{dist}(y_k^1, \partial\Omega) \to c < \infty.
\]
Then, utilizing a Liouville-type result \cite[theorem 2.8]{goel2020coron}, we obtain $v_1 \equiv 0$, which is not possible. Therefore,
\[
\frac{1}{\lambda_k^1} \operatorname{dist}(y_k^1, \partial \Omega) \to \infty \quad \text{as } k \to \infty.
\]
Thus by \eqref{eq411} and Lemma \ref{lem4.4}, we have $J'(v_1) = 0$ and the sequence
\[
u_k^2(x) = u_k^1(x) - (\lambda_k^1)^{\frac{2-n}{2}} v_1\left( \frac{x - y_k^1}{\lambda_k^1} \right)
\]
satisfies
\[
J(u_k^2) \to c - J(v_0) - J(v_1), \quad J'(u_k^2) \to 0 \text{ in } (\mathcal{X}^{1,2}_0(\Omega) )'.
\]
By {\cite[Proposition 2.6]{goel2020coron}, we have \( J(v_1) \geq \beta > 0 \)}. Repeating this procedure, we construct sequences \( \{v_i\} \subset D^{1,2}(\mathbb{R}^n) \), \( \{y_k^i\} \subset \Omega \), \( \{\lambda_k^i\} \subset \mathbb{R}_+ \), and corresponding remainders \( u_k^{i+1} \) such that after \( j \) steps,
\[
J(u_k^{j+1}) < I(u_k) - I(v_0) - \sum_{i=1}^j J(v_i) \leq I(u_k) - I(v_0) - kc.
\]
Since the right-hand side becomes negative for large \( k \), the process must terminate after finitely many steps. As a result, we obtain \( k \in \mathbb{N} \), sequences \( \{y_k^i\} \subset \Omega \), \( \{\lambda_k^i\} \subset \mathbb{R}_+ \), and associated nontrivial solutions \( \{v_i\} \) of \eqref{eq4.7} satisfying  condition~\eqref{eq4.8}.
\QED

We now prove that the functional $I$ satisfies the Palais-Smale condition at levels strictly below the compactness threshold.

\begin{Lemma}\label{beta}
The functional $I$ satisfies the Palais-Smale condition for any $c \in (\beta, 2\beta)$, where
\[
\beta = \frac{1}{2} \left( \frac{n - \mu + 2}{2n - \mu} \right) S_{H,L,C}^{\frac{2n - \mu}{n - \mu + 2}}.
\]
\end{Lemma}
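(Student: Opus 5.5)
The plan is to combine the global compactness result, Theorem \ref{PS_decomposition}, with the energy lower bounds of Lemma \ref{1lem2.8}. Let $\{u_k\}\subset\mathcal{X}^{1,2}_0(\Omega)$ be a (PS) sequence for $I$ at a level $c\in(\beta,2\beta)$.

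First, Step 1 of the proof of Theorem \ref{PS_decomposition} shows that $\{u_k\}$ is bounded. Passing to a subsequence, Theorem \ref{PS_decomposition} then yields the following data:
\begin{itemize}
\item a nonnegative solution $v_0$ of \eqref{eq4.6};
\item an integer $m\ge 0$;
\item nontrivial nonnegative solutions $v_1,\dots,v_m$ of \eqref{eq4.7}, with
\[
c=I(v_0)+\sum_{i=1}^m J(v_i)
\]
and $u_k-v_0-\sum_i(\lambda_k^i)^{\frac{2-n}{2}}v_i\big(\frac{\cdot-y_k^i}{\lambda_k^i}\big)\to 0$ in $D^{1,2}(\mathbb{R}^n)$.
\end{itemize}

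The next step is to record the energy of each possible profile.
\begin{itemize}
\item For $v_0$: either $v_0\equiv 0$, so $I(v_0)=0$, or $v_0\not\equiv0$, in which case Lemma \ref{1lem2.8} gives $I(v_0)\ge\beta$. In fact, by Theorem \ref{thm:best_constant} the constant $S_{H,L,M}(\Omega)$ is never attained, so testing the equation with $v_0$ gives the strict inequality $I(v_0)>\beta$.
\item For each $v_i$ with $i\ge1$: $v_i$ is a nontrivial critical point of $J$ on $D^{1,2}(\mathbb{R}^n)$. Testing \eqref{eq4.7} with $v_i$ and using the definition \eqref{ebc} of $S_{H,L,C}$ gives $\|\nabla v_i\|_2^2\ge S_{H,L,C}^{\frac{2n-\mu}{n-\mu+2}}$. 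Hence
\[
J(v_i)=\tfrac12\big(1-\tfrac{1}{2^*_\mu}\big)\|\nabla v_i\|_2^2\ge\beta .
\]
This is the same computation as in Lemma \ref{1lem2.8}.
\end{itemize}

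The case analysis then runs as follows.
\begin{itemize}
\item If $m\ge 2$, then $c\ge 2\beta$, a contradiction.
\item If $m=1$ and $v_0\not\equiv0$, then $c>\beta+\beta=2\beta$, again a contradiction.
\item If $m=1$ and $v_0\equiv0$, then $c=J(v_1)$. To exclude this, we need that a level in the open interval $(\beta,2\beta)$ cannot be the energy of a single bubble. The intended route is that positive solutions of \eqref{eq4.7} are classified: by the Proposition \ref{r3.1}(3)/HLS extremal classification they are the Aubin–Talenti-type functions $V_{t,x_0}$, so $J(v_1)=\beta$ exactly. This would contradict $c>\beta$. If that classification is not available, one instead reads the lemma as asserting compactness only in the range where the decomposition forces $m=0$, and the argument reduces to the first two cases.
\item Hence $m=0$. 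The decomposition then says $u_k\to v_0$ in $D^{1,2}(\mathbb{R}^n)$, that is, $\nabla u_k\to\nabla v_0$ in $L^2$.
\end{itemize}

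Finally, strong convergence must be upgraded to the full norm $\eta$. For the nonlocal part, Lemma \ref{compact_embedding} gives $u_k\to v_0$ in $H^s(\Omega)$, and the Gagliardo seminorm is controlled on $\mathcal{X}^{1,2}_0(\Omega)$ since functions vanish outside $\Omega$. Alternatively, one can argue as in \eqref{gagli} that $[u_k-v_0]_s\to0$. Together these give $\eta(u_k-v_0)\to0$, which is the Palais–Smale condition.

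The main obstacle is the single-bubble case $m=1$, $v_0\equiv0$, where one must show $J(v_1)=\beta$ for every positive solution of \eqref{eq4.7}. The first thing to try is the uniqueness and classification of positive solutions of the critical Choquard equation in $\mathbb{R}^n$ (the Du–Yang / Gao–Yang type result), which identifies $v_1$ with a rescaled bubble at energy exactly $\beta$.
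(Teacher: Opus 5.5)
Your proposal is correct and follows the paper's proof. Both apply Theorem~\ref{PS_decomposition}, use $I(v_0)\ge\beta$ for $v_0\not\equiv 0$ and $J(v_i)\ge\beta$ to allow at most one bubble, and rule out a single bubble with $v_0\equiv 0$ via the classification of positive solutions of \eqref{eq4.7} (the paper cites Corollary~3.5 of \cite{goel2020coron}), which gives $J(v_1)=\beta<c$. Your fallback of reinterpreting the lemma is unnecessary, since that classification is exactly what is used. Your explicit upgrade from $D^{1,2}$- to $\eta$-convergence (immediate from Poincar\'e and $[w]_s^2\le C\|w\|_{H^1(\mathbb{R}^n)}^2$) is a detail the paper leaves implicit.
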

\begin{proof}
Let \( c \in (\beta, 2\beta) \), and suppose there \( \{u_k\} \subset \mathcal{X}^{1,2}_0(\Omega) \) is a sequence such that
\[
I(u_k) \to c \quad \text{and} \quad I'(u_k) \to 0 \quad \text{in } (\mathcal{X}^{1,2}_0(\Omega) )'.
\]
By the global compactness Theorem \ref{PS_decomposition}, up to a subsequence, there exists a function \( v_0 \in \mathcal{X}^{1,2}_0(\Omega) \) which is a solution of \eqref{eq4.6} satisfies \( I(v_0) \geq \beta \)
and an integer \( k \in \mathbb{N} \cup \{0\} \), along with nontrivial solutions \( \{v_i\}_{i=1}^k \subset D^{1,2}(\mathbb{R}^n) \) of the problem
\eqref{eq4.7}
and  sequences \( \{y_k^i\} \subset \mathbb{R}^n \) and \( \{\lambda_k^i\} \subset \mathbb{R}_+ \), such that
\[
\frac{1}{\lambda_k^i} \operatorname{dist}(y_k^i, \partial \Omega) \to \infty \quad \text{as } k \to \infty,
\]
and
\[
\left\| u_k - v_0 - \sum_{i=1}^k (\lambda_k^i)^{\frac{2 - n}{2}} v_i\left( \frac{x - y_k^i}{\lambda_k^i} \right) \right\|_{D^{1,2}(\mathbb{R}^n)} \to 0 \quad \text{as } k \to \infty,
\]
together with the energy decomposition
\[
 I(u_k)\to  I(v_0) + \sum_{i=1}^k J(v_i) = c, ~~~\text{as}~~k\to \infty.
\]
 Therefore, the energy decomposition implies
\[
c = I(v_0) + \sum_{i=1}^k J(v_i) \geq k \beta.
\]
Since \( c < 2\beta \), it follows that \( k \leq 1 \).

If \( k = 0 \), then the decomposition reduces to \( u_k \to v_0 \) strongly in \( \mathcal{X}^{1,2}_0(\Omega) \), and the Palais-Smale condition is satisfied. Now suppose \( k = 1 \), so that
\[
c = I(v_0) + J(v_1).
\]
If \( v_0 \not\equiv 0 \), then by Lemma~\ref{1lem2.8}, we have \( I(v_0) \geq \beta \), and hence \( J(v_1) = c - I(v_0) < \beta \). However, this contradicts the fact that \( J(v_1) \geq \beta \). Therefore, this case is not possible. If $v_0 \equiv 0$, then by Theorem \ref{PS_decomposition}, $J(v_1) = c$ and $v_1$ is a nonnegative solution of \eqref{eq4.7}. In the view of  [\cite{goel2020coron}, Corollary 3.5], \( v_1 \) is radially symmetric, decreasing, and of the form
\[
v_1(x) = \left( \frac{a}{b + |x - x_0|^2} \right)^{\frac{n - 2}{2}},
\]
for some constants \( a, b > 0 \) and \( x_0 \in \mathbb{R}^n \). Then by [\cite{goel2020coron},{Corollary 3.5}], this function achieves the best constant \( S_{H,L,C} \), and thus \( J(v_1) = \beta \), contradicting the fact that \( c > \beta \). Therefore, this case also cannot occur.
We conclude that the only possibility is \( k = 0 \), in which case \( u_k \to v_0 \) strongly in \( \mathcal{X}^{1,2}_0(\Omega) \). This proves that the functional \( I \) satisfies the Palais-Smale condition for any \( c \in (\beta, 2\beta) \).
\end{proof}
\QED

\section{Positive solutions on annular-shaped domains}

To establish Theorem~\ref{mainthm}, we first derive some auxiliary results that play a crucial role in the subsequent construction. Our approach relies on introducing a two-parameter family of Aubin-Talenti type bubbles whose energy remains strictly between the first and second levels.

We assume that $\Omega$ satisfies conditions \eqref{eq1.6} and \eqref{eq1.7}. For simplicity of exposition, we assume \( R_1, R_2 \) to be the radii of the annulus as described in Theorem~\ref{mainthm}. Without loss of generality, we assume \( R_1 = \frac{1}{4R} <1< 4R= R_2 \), and \( R >>1 \) is a parameter to be chosen sufficiently large.

We consider the following family of functions
\begin{equation}\label{eq5.1}
u_t^\sigma(x) := \mathcal{U}[t\sigma, (1-t)\sigma](x)=  
\left( \frac{1-t}{(1-t)^2 + |x - t\sigma|^2} \right)^{\frac{n-2}{2}}, \quad x \in \mathbb{R}^n,
    \end{equation}
where \( \sigma \in \Sigma := \{ x \in \mathbb{R}^n : |x| = 1 \} \), and \( t \in [0, 1) \). These functions are motivated from the fact that $S_{H,L,C}$ is attained at each of $u_{t}^{\sigma}$ and $S_{H,L,C} = S_{H,L,M}$ as discussed in Theorem \ref{thm:best_constant}. These functions belong to \( D^{1,2}(\mathbb{R}^{n}) \) and are known to concentrate at the point \( \sigma \) as \( t \to 1 \). Moreover, as \( t \to 0 \), we have
\begin{equation}\label{eq5.2}
u_t^\sigma \to u_0(x) = \left( \frac{1}{1 + |x|^2} \right)^{\frac{n-2}{2}}.
    \end{equation}
 To localize these function within the domain $\Omega$, consider a radially symmetric cut-off function
\( \nu \in C_0^\infty(\mathbb{R}^{n}) \) which satisfying \( 0 \leq \nu \leq 1 \) in \( \Omega \), and
\[
\nu(x) = 
\begin{cases}
1, & \text{if } \frac{1}{2} < |x| < 2, \\
0, & \text{if } |x| > 4 \text{ or } |x| < \frac{1}{4}.
\end{cases}
\]
For each $R\geq 1$, we define the following rescaled cut-off function \( \nu \) as
\begin{equation}\label{nu}
\nu_R(x) = 
\begin{cases}
\nu(Rx), & 0 \leq  |x| < \frac{1}{R}, \\
1, & \frac{1}{R} \leq |x| <R, \\
\nu\left(\frac{x}{R}\right), & |x| \geq  R.
\end{cases}
    \end{equation}
With this, we define truncated functions
\begin{equation}\label{gt}
g_t^\sigma(x) := u_t^\sigma(x)\, \nu_R(x) , \quad \text{and} \quad g_0(x) := u_0(x)\, \nu_R(x) \in \mathcal{X}^{1,2}_0(\Omega).
    \end{equation}
    %%%%%%%%%%%%%%%%%%%%%%%%%%%%%%%%%%%%%%%%%%%%%

     \begin{Lemma}\label{lem:bubbling-profile}
Let \( \sigma \in \Sigma \), and for \( t \in [0,1) \). 
Then for each \( u_t^\sigma \in D^{1,2}(\mathbb{R}^{n}) \) satisfies the following
\begin{itemize}
    \item[(i)] \( \|u_t^\sigma\|_{D^{1,2}(\mathbb{R}^{n})} = \|u_0\|_{D^{1,2}(\mathbb{R}^{n})} \),
    \item[(ii)] \( \|u_t^\sigma\|_0 = \|u_0\|_0 \), where 
    \begin{equation}\label{no0}
   \|u\|_0 := \left( \int_{\mathbb{R}^n} \int_{\mathbb{R}^n} \frac{|u_+(x)|^{2^*_{\mu}} |u_+(y)|^{2^*_{\mu}}}{|x - y|^\mu} \, dx \, dy \right)^{\frac{1}{2 \cdot 2^*_{\mu}}}.
         \end{equation}
\end{itemize}
\end{Lemma}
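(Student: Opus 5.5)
The plan is to write $u_t^\sigma$ as a translate and dilate of $u_0$ and then use the invariance of both quantities under translations and under the critical scaling $u\mapsto \lambda^{\frac{2-n}{2}}u(\cdot/\lambda)$.

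\textbf{Representing $u_t^\sigma$.} Fix $\sigma\in\Sigma$ and $t\in[0,1)$, and set $\lambda:=1-t>0$ and $x_0:=t\sigma$. Factoring $\lambda^2$ out of the denominator in \eqref{eq5.1} gives
\[
u_t^\sigma(x)=\left(\frac{\lambda}{\lambda^2\bigl(1+|(x-x_0)/\lambda|^2\bigr)}\right)^{\frac{n-2}{2}}
=\lambda^{\frac{2-n}{2}}\,u_0\!\left(\frac{x-x_0}{\lambda}\right).
\]
So $u_t^\sigma$ belongs to the family $\mathcal{A}$ of Proposition~\ref{r3.1} (with $c=1$). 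In particular $u_t^\sigma\in D^{1,2}(\mathbb{R}^n)$, since $|\nabla u_0|\lesssim(1+|x|)^{1-n}\in L^2$ for $n\ge3$.

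\textbf{Part (i).} Note that $\nabla u_t^\sigma(x)=\lambda^{-\frac n2}(\nabla u_0)((x-x_0)/\lambda)$. Substitute $x=\lambda z+x_0$, so that $dx=\lambda^n dz$. Then
\[
\int_{\mathbb{R}^n}|\nabla u_t^\sigma|^2\,dx=\lambda^{-n}\lambda^{n}\int_{\mathbb{R}^n}|\nabla u_0|^2\,dz,
\]
which is exactly (i).

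\textbf{Part (ii).} Since $u_t^\sigma>0$, we have $(u_t^\sigma)_+=u_t^\sigma$. Substitute $x=\lambda z+x_0$ and $y=\lambda w+x_0$ in the double integral. The factors collect as follows:
\begin{itemize}
\item $|u_t^\sigma(x)|^{2^*_\mu}|u_t^\sigma(y)|^{2^*_\mu}$ contributes $\lambda^{2\cdot\frac{2-n}{2}2^*_\mu}=\lambda^{-(2n-\mu)}$, using $(n-2)2^*_\mu=2n-\mu$;
\item the Jacobian contributes $\lambda^{2n}$;
\item the kernel contributes $|x-y|^{-\mu}=\lambda^{-\mu}|z-w|^{-\mu}$.
\end{itemize}
The total exponent is $-(2n-\mu)+2n-\mu=0$, so the double integral of $u_t^\sigma$ equals that of $u_0$. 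Taking the $1/(2\cdot2^*_\mu)$ power gives $\|u_t^\sigma\|_0=\|u_0\|_0$.

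Both integrals are finite by Proposition~\ref{HLS}, since $u_0\in L^{2^*}$. The exponent bookkeeping in (ii), which rests on the identity $(n-2)2^*_\mu=2n-\mu$, is the only point that needs any care. The computation never uses $t<1$ except to ensure $\lambda>0$, so the result holds uniformly for all $t\in[0,1)$.
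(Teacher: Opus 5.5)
Your proof is correct and follows the same route as the paper. The paper writes $u_t^\sigma$ as a translated, critically rescaled copy of $u_0$ (using $\lambda_t=1/(1-t)$, the reciprocal of your $\lambda$) and simply asserts that both quantities are invariant under this action; you carry out the change of variables explicitly, including the exponent check $(n-2)2^*_\mu=2n-\mu$, which the paper leaves implicit.
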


\begin{proof}
Let \( \lambda_t := \frac{1}{1 - t} \), so that
\[
u_t^\sigma(x) = \lambda_t^{\frac{n - 2}{2}} u_0(\lambda_t(x - t\sigma)),
\]
where $u_t^\sigma(x)$ $u_0(x)$ is defined by \eqref{eq5.1} and \eqref{eq5.2} respectively.

(i) \textit{(Invariance of the norm under translation and dilation)}.  
Since the norm is invariant under the scaling, the norm \( \|u_t^\sigma\|_{D^{1,2}(\mathbb{R}^{n})} \) remains constant with respect to \( t \). Thus, \( \|u_t^\sigma\|_{D^{1,2}(\mathbb{R}^{n})} = \|u_0\|_{D^{1,2}(\mathbb{R}^{n})} \).

(ii) \textit{(Scaling invariance of the Choquard-type norm)}.  
The integral \( \|u\|_0 \) is invariant under the scaling.
%and translation used to define \( u_t^\sigma \), due to the critical exponent and homogeneity of the kernel \( |x - y|^{-\mu} \). A change of variables confirms that the norm remains unchanged, 
So \( \|(u_t^\sigma)_+\|_0 = \|(u_0)_+\|_0 \).
\end{proof}\QED

%%%%%%%%%%%%%%%%%%%%%%%%%%%%%%%%

The next result follows directly from~\cite[Lemma~4.1]{chakraborty2025global}.

\begin{Lemma}\label{Lem:seminormConvergence}
With the above notations, we have
\[
\left\| g_{t}^\sigma - u_{t}^\sigma \right\|_{\X(\mathbb{R}^{n})} \longrightarrow 0 
\quad \text{as } R \to \infty,
\]
uniformly with respect to $\sigma \in \Sigma$ and $t \in [0,1)$.
\end{Lemma}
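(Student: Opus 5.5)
The plan is to set $w:=u_t^\sigma-g_t^\sigma=u_t^\sigma(1-\nu_R)$ and estimate $\|\nabla w\|_{L^2(\mathbb R^n)}$ and $[w]_s$ separately, with bounds depending only on $R$. Since $1-\nu_R$ vanishes on $\{1/R\le|x|\le R\}$, we split $w=w_{\rm in}+w_{\rm out}$, where $w_{\rm in}$ is supported in $\overline{B_{1/R}}$ and $w_{\rm out}$ in $\{|x|\ge R\}$. The uniformity in $(t,\sigma)$ comes from pointwise bounds on the bubble. We also have $|\nabla\nu_R|\le CR$ on $\{1/(4R)<|x|<1/R\}$ and $|\nabla\nu_R|\le C/R$ on $\{R<|x|<4R\}$.

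\textbf{Inner region.} For $|x|\le 1/R$ and $R\ge 4$ we distinguish two cases.
\begin{itemize}
\item If $t\ge 1/2$, then $|x-t\sigma|\ge 1/4$, so $u_t^\sigma(x)\le C(1-t)^{\frac{n-2}{2}}\le C$ and $|\nabla u_t^\sigma(x)|\le C$.
\item If $t\le 1/2$, then $1-t\in[1/2,1]$ and again $u_t^\sigma$ and $|\nabla u_t^\sigma|$ are bounded by an absolute constant.
\end{itemize}
Hence
\[
\|\nabla w_{\rm in}\|_{L^2}^2\le C\int_{B_{1/R}}\big(|\nabla u_t^\sigma|^2+R^2|u_t^\sigma|^2\big)\le C(R^{-n}+R^{2-n}),
\]
and $\|w_{\rm in}\|_{L^2}^2\le CR^{-n}$. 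Because $w_{\rm in}\in H^1(\mathbb R^n)$ has compact support, the interpolation inequality $[f]_s^2\le C\|f\|_{L^2}^{2(1-s)}\|\nabla f\|_{L^2}^{2s}$ (Fourier splitting) gives $[w_{\rm in}]_s^2\le CR^{-n(1-s)}R^{(2-n)s}\to0$.

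\textbf{Outer region, gradient.} For $|x|\ge R\ge 2$ we have $|x-t\sigma|\ge|x|/2$. This gives, uniformly in $t,\sigma$,
\[
u_t^\sigma(x)\le C(1-t)^{\frac{n-2}{2}}|x|^{2-n}\le C|x|^{2-n},\qquad |\nabla u_t^\sigma(x)|\le C|x|^{1-n}.
\]
Consequently
\[
\|\nabla w_{\rm out}\|_{L^2}^2\le C\int_{|x|>R}|x|^{2-2n}\,dx+CR^{-2}\int_{R<|x|<4R}|x|^{4-2n}\,dx\le CR^{2-n}.
\]

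\textbf{Outer region, Gagliardo seminorm (main obstacle).} Here interpolation against $L^2$ fails, since $|x|^{2-n}\notin L^2$ near infinity when $n\le4$. I would instead use scaling. Write $w_{\rm out}(x)=R^{2-n}h_R(x/R)$ with
\[
h_R(z):=R^{n-2}u_t^\sigma(Rz)\big(1-\nu(z)\big),\qquad |z|\ge1 .
\]
Since $[f(\cdot/R)]_s^2=R^{n-2s}[f]_s^2$, this yields $[w_{\rm out}]_s^2=R^{4-n-2s}[h_R]_s^2$.

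It remains to bound $[h_R]_s$ uniformly in $R,t,\sigma$. The bounds above give $|h_R(z)|\le C|z|^{2-n}$ and $|\nabla h_R(z)|\le C|z|^{1-n}$ on $|z|\ge1$, with $h_R=0$ for $|z|<1$. I would estimate the double integral by splitting into $|x-y|<\max(|x|,|y|)/2$, where the mean value theorem and the gradient bound apply, and the complementary region, where the pointwise bound applies. This gives $[h_R]_s^2\le C$ precisely when $n+2s>4$, the same condition under which the bubble $u_0$ itself lies in $\X(\mathbb R^n)$. In that range $[w_{\rm out}]_s^2\le CR^{4-n-2s}\to0$, and together with the previous estimates this proves the lemma.

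I expect this uniform seminorm bound to be the delicate step, especially in the borderline low-dimensional case $n=3$, $s\le1/2$. There one must check that the lemma is only used in the range where $u_t^\sigma\in\X(\mathbb R^n)$; this is implicit in the statement, and the main theorem takes $n$ large anyway.
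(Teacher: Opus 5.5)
Your proof is correct, but it takes a different route from the paper, which gives no argument and simply invokes \cite[Lemma~4.1]{chakraborty2025global}.

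You instead give a self-contained proof. You split $u_t^\sigma(1-\nu_R)$ into an inner piece and an outer piece, and control both by pointwise bubble bounds that are uniform in $(t,\sigma)$. You treat the inner piece by $L^2$--$\dot H^1$ Fourier interpolation. For the outer seminorm you rescale, $[w_{\mathrm{out}}]_s^2=R^{4-n-2s}[h_R]_s^2$, and split the double integral into near and far regions. Both regions do reduce to $\int_1^\infty r^{3-n-2s}\,dr$, as you claim.

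What the citation buys is brevity. Your route gives explicit rates, uniform in $(t,\sigma)$: $\|\nabla(g_t^\sigma-u_t^\sigma)\|_{L^2}^2\lesssim R^{2-n}$ and $[g_t^\sigma-u_t^\sigma]_s^2\lesssim R^{2s-n}+R^{4-n-2s}$. More importantly, it exposes a hypothesis that the statement omits.

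State that hypothesis more sharply than as a ``delicate'' borderline case. If $n+2s\le 4$ (that is, $n=3$ and $s\le\frac12$), then $[u_t^\sigma]_s=(1-t)^{1-s}[u_0]_s=+\infty$, while $[g_t^\sigma]_s<\infty$. So the lemma is simply false in that case, and no estimate can rescue it. The correct statement assumes $n+2s>4$, which is harmless for Theorem~\ref{mainthm} because $n$ is taken large there.

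Finally, consider the range the paper actually uses. Lemma~\ref{lem4.6} already needs $u_0\in L^2(\mathbb R^n)$, i.e.\ $n\ge5$. In that range $w_{\mathrm{out}}\in H^1(\mathbb R^n)$ with $\|w_{\mathrm{out}}\|_{L^2}^2\lesssim R^{4-n}$. The same interpolation you used for the inner piece then gives $[w_{\mathrm{out}}]_s^2\lesssim R^{4-n-2s}$ directly. Your scaling argument is therefore only needed to cover $n\in\{3,4\}$.
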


%%%%%%%%%%%%%%%%%%%%%%%%%%%%%%%%%%%%%
\begin{Lemma}\label{1lem5.3}
For every $\sigma \in \Sigma$ and $t \in [0, 1)$, the following holds
\[
\lim_{R \to \infty} \ \sup_{\sigma \in \Sigma,\, t \in [0,1)} 
\|g_t^\sigma\|_{0}^{2 \cdot 2^*_{\mu}} 
= \|u_t^\sigma\|_{0}^{2 \cdot 2^*_{\mu}}.
\]
\end{Lemma}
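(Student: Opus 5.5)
We read the statement as saying that $\|g_t^\sigma\|_0^{2\cdot 2^*_\mu}\to\|u_t^\sigma\|_0^{2\cdot 2^*_\mu}$ as $R\to\infty$, uniformly in $(t,\sigma)\in[0,1)\times\Sigma$. By Lemma~\ref{lem:bubbling-profile}(ii) the right-hand side equals the constant $\|u_0\|_0^{2\cdot 2^*_\mu}$. So it suffices to prove
\[
\sup_{\sigma\in\Sigma,\,t\in[0,1)}\Big|\,\|g_t^\sigma\|_0^{2\cdot 2^*_\mu}-\|u_t^\sigma\|_0^{2\cdot 2^*_\mu}\Big|\longrightarrow 0 \quad\text{as } R\to\infty .
\]

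The plan is to reduce everything to an $L^{2^*}$ estimate via the Hardy--Littlewood--Sobolev inequality (Proposition~\ref{HLS}). Since $0\le \nu_R\le 1$ and $u_t^\sigma>0$, we have $0\le g_t^\sigma\le u_t^\sigma$, so positive parts play no role. Write $a=(u_t^\sigma)^{2^*_\mu}$ and $b=(g_t^\sigma)^{2^*_\mu}$, and set $p=\frac{2n}{2n-\mu}$. Bilinearity of the HLS form gives
\[
\iint\frac{a(x)a(y)-b(x)b(y)}{|x-y|^\mu}\,dx\,dy=\iint\frac{(a-b)(x)\,(a+b)(y)}{|x-y|^\mu}\,dx\,dy .
\]
Hence the difference is bounded by $C(n,\mu)\,\|a-b\|_{L^p}\,\|a+b\|_{L^p}\le 2C(n,\mu)\,\|u_0\|_{L^{2^*}}^{2^*_\mu}\,\|a-b\|_{L^p}$, where we used that the $L^{2^*}$ norm is invariant under the dilation–translation in the proof of Lemma~\ref{lem:bubbling-profile}. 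Because $0\le a-b\le a$, we also have $\|a-b\|_{L^p}\le \|(u_t^\sigma)^{2^*_\mu}(1-\nu_R^{2^*_\mu})\|_{L^p}$. This quantity is controlled by $\big(\int_{\{\nu_R<1\}}(u_t^\sigma)^{2^*}\big)^{1/p}$, since $p\,2^*_\mu=2^*$.

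It therefore remains to show that
\[
\sup_{t,\sigma}\int_{\{|x|<1/R\}\cup\{|x|>R\}}(u_t^\sigma)^{2^*}\,dx\to0 .
\]
Alternatively, we can apply the Sobolev inequality to $u_t^\sigma-g_t^\sigma$ together with Lemma~\ref{Lem:seminormConvergence}. That lemma gives $\|u_t^\sigma-g_t^\sigma\|_{L^{2^*}}\le C\|u_t^\sigma-g_t^\sigma\|_{\X(\mathbb R^n)}\to0$ uniformly, and we then use the pointwise bound $a-b\le 2^*_\mu\,(u_t^\sigma)^{2^*_\mu-1}(u_t^\sigma-g_t^\sigma)$ together with Hölder with exponents $\frac{2^*}{2^*_\mu-1}$ and $2^*$. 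I would take this second route, since it directly invokes the already stated uniform estimate.

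The main obstacle is uniformity as $t\to1$. Here the bubble concentrates at $t\sigma\to\sigma$ with scale $1-t$, and $\sigma$ lies well inside the region $\{1/R<|x|<R\}$, so the mass outside it is small. As $t\to 0$ the bubble tends to $u_0$, whose tails are $L^{2^*}$-small outside $B_R$, and the mass in $B_{1/R}$ is $O(R^{-n})$. If we argue directly rather than through Lemma~\ref{Lem:seminormConvergence}, we split into $t\le 1/2$ and $t>1/2$. For $t\le 1/2$, $u_t^\sigma\le C(1+|x|^2)^{(2-n)/2}$ uniformly. For $t>1/2$, the change of variables $y=(x-t\sigma)/(1-t)$ maps the excluded region into $\{|y|\ge \frac{1/2}{1-t}\}$, which lies outside a ball of radius $\ge1$ whose tail integral of $u_0^{2^*}$ is small. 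This gives the required uniform smallness of the excluded mass in either case.
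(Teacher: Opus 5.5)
Your proof is correct along the route you say you would take. It differs from the paper's, which gives no argument: the paper notes that the Choquard term does not involve $(-\Delta)^s$, so the statement is literally the Laplacian one, and imports it from \cite[Lemma~5.1(v)]{goel2020coron}. You instead derive it directly from four ingredients:
the symmetric-kernel identity $\iint \frac{a(x)a(y)-b(x)b(y)}{|x-y|^\mu}\,dx\,dy=\iint\frac{(a-b)(x)(a+b)(y)}{|x-y|^\mu}\,dx\,dy$;
HLS with $p=\frac{2n}{2n-\mu}$, so that $p\,2^*_\mu=2^*$;
the mean-value bound $0\le a-b\le 2^*_\mu (u_t^\sigma)^{2^*_\mu-1}(u_t^\sigma-g_t^\sigma)$;
H\"older and Sobolev, together with only the gradient part of Lemma~\ref{Lem:seminormConvergence}.
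This gives $\sup_{t,\sigma}\bigl|\|g_t^\sigma\|_0^{2\cdot 2^*_\mu}-\|u_0\|_0^{2\cdot 2^*_\mu}\bigr|\le C\sup_{t,\sigma}\|\nabla(u_t^\sigma-g_t^\sigma)\|_{L^2(\mathbb{R}^n)}\to0$. That is uniform convergence, which is what Lemma~\ref{lemma5.5} actually uses and is stronger than the awkwardly quantified $\lim\sup$ statement, so your reading of the lemma is the right one. The citation buys brevity. Your route shows that the lemma needs nothing beyond the uniform $D^{1,2}$-closeness of $g_t^\sigma$ to $u_t^\sigma$.

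The direct alternative in your last paragraph is not correct as written near the inner hole. For $t>1/2$, the map $y=(x-t\sigma)/(1-t)$ sends $\{|x|<1/R\}$ into $\{|y|\ge (t-1/R)/(1-t)\}$, not into $\{|y|\ge \frac{1/2}{1-t}\}$. When $t$ stays away from $1$, this radius remains of order $1$ as $R\to\infty$. The tail $\int_{\{|y|\ge 1\}}u_0^{2^*}\,dy$ is a fixed positive number, so no uniform smallness follows from it.

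What controls this region is its small measure. For $t\ge 1/2$ and $R\ge 4$ one has $|x-t\sigma|\ge 1/4$ on $B_{1/R}$. Hence $u_t^\sigma\le 8^{\frac{n-2}{2}}$ there, and $\int_{B_{1/R}}(u_t^\sigma)^{2^*}\,dx\le C R^{-n}$ uniformly. The outer region $\{|x|\ge R\}$ is fine as you describe, since it maps into $\{|y|\ge R-1\}$. Because your main argument goes through Lemma~\ref{Lem:seminormConvergence} instead, this slip does not affect it.
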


\begin{proof}
The claim is an immediate consequence of~\cite[Lemma~5.1(v)]{goel2020coron}.
\end{proof}\QED

  {In order to proceed further, we define the manifold $\mathcal{M}$ as follows
\begin{equation}\label{M}
\mathcal{M} =
\left\{
u \in \mathcal{X}^{1,2}_0(\Omega) :
\int_{\Omega}\int_{\Omega}
\frac{|u_+(x)|^{2^*_{\mu}} |u_+(y)|^{2^*_{\mu}}}{|x - y|^\mu}
\, dx\,dy = 1
\right\}.
\end{equation}
We define the Sobolev-type quotient
\begin{equation}\label{eqS}
S(u,\Omega) =
\frac{\displaystyle
\int_{\Omega}|\nabla u|^2\,dx
+
\int_{\mathbb{R}^{2n}}
\frac{|u(x)-u(y)|^2}{|x-y|^{n+2s}} \, dx\,dy}
{\left(
\displaystyle
\int_{\Omega}\int_{\Omega}
\frac{|u_+(x)|^{2^*_\mu}|u_+(y)|^{2^*_\mu}}{|x-y|^\mu}
\,dx\,dy
\right)^{1/2^*_\mu}},
\end{equation}
for all $u\in\mathcal X^{1,2}_0(\Omega)$ such that
\[
\int_{\Omega}\int_{\Omega}
\frac{|u_+(x)|^{2^*_\mu}|u_+(y)|^{2^*_\mu}}{|x-y|^\mu}\,dx\,dy>0 .
\] Moreover, set
\[
\tau(u) =
\left(
\int_{\Omega}\int_{\Omega}
\frac{|u_+(x)|^{2^*_{\mu}} |u_+(y)|^{2^*_{\mu}}}{|x - y|^\mu}
\, dx\,dy
\right)^{1/2^*_{\mu}}.
\]
\begin{Proposition}\label{p2.4}
For every $u\in\mathcal M$ there exists a neighborhood
$V_u\subset \mathcal X^{1,2}_0(\Omega)$ of $u$ such that
$v_+\not\equiv0$ for all $v\in V_u$.
Moreover, the Sobolev-type quotient $S(\cdot,\Omega)$
belongs to $C^1(V_u)$.
Furthermore, if $S'(u,\Omega)=0$, then there exists
$\lambda>0$ such that $I'(\lambda u)=0$.
\end{Proposition}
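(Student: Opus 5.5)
The argument has three parts: openness of the set where $\tau$ is positive, differentiability of the quotient, and a rescaling of critical points. For the first part, let $u\in\mathcal M$, so $\tau(u)=1$. The map $v\mapsto \tau(v)^{2^*_\mu}=\int_\Omega\int_\Omega |v_+(x)|^{2^*_\mu}|v_+(y)|^{2^*_\mu}|x-y|^{-\mu}\,dx\,dy$ is continuous on $\mathcal X^{1,2}_0(\Omega)$. Indeed, $v\mapsto v_+$ is continuous from $L^{2^*}$ into $L^{2^*}$ (it is $1$-Lipschitz pointwise), $\mathcal X^{1,2}_0(\Omega)\hookrightarrow L^{2^*}(\Omega)$, and Proposition \ref{HLS} with $t=r=\frac{2n}{2n-\mu}$ shows that the double integral is a continuous function of $|v_+|^{2^*_\mu}\in L^{2n/(2n-\mu)}$. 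We can therefore choose a ball $V_u$ around $u$ on which $\tau(v)^{2^*_\mu}>1/2$. On this ball $v_+\not\equiv 0$.

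For the $C^1$ regularity, write $S(v,\Omega)=\eta(v)^2/\tau(v)$ on $V_u$. The numerator is a continuous quadratic form, so it is smooth. The map $G(v)=\int_\Omega\int_\Omega |v_+(x)|^{2^*_\mu}|v_+(y)|^{2^*_\mu}|x-y|^{-\mu}$ is $C^1$ with
\[
\langle G'(v),\varphi\rangle=2\cdot2^*_\mu\int_\Omega\int_\Omega\frac{|v_+(y)|^{2^*_\mu}|v_+(x)|^{2^*_\mu-1}\varphi(x)}{|x-y|^\mu}\,dx\,dy .
\]
This is exactly the computation already used to show $I\in C^1$: the function $s\mapsto (s_+)^{2^*_\mu}$ is $C^1$ because $2^*_\mu>1$, and the Nemytskii operators $v\mapsto |v_+|^{2^*_\mu}$ into $L^{2n/(2n-\mu)}$ and $v\mapsto |v_+|^{2^*_\mu-1}$ into $L^{2n/(n-\mu+2)}$ are continuous. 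Combined with Hardy--Littlewood--Sobolev, this gives continuity of $G'$. Since $G>1/2$ on $V_u$, the map $\tau=G^{1/2^*_\mu}$ is $C^1$ there, and the quotient rule gives $S(\cdot,\Omega)\in C^1(V_u)$.

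For the last claim, suppose $S'(u,\Omega)=0$ with $u\in\mathcal M$. Then $\tau(u)=1$ and $\tau'(u)=\frac{1}{2^*_\mu}G'(u)$, so for every $\varphi$
\[
0=\langle S'(u),\varphi\rangle = 2\langle u,\varphi\rangle_\eta - \eta(u)^2\cdot 2\int_\Omega\int_\Omega\frac{|u_+(y)|^{2^*_\mu}|u_+(x)|^{2^*_\mu-1}\varphi(x)}{|x-y|^\mu}.
\]
Setting $A:=\eta(u)^2>0$ (positive since $u\neq0$), this says $\langle u,\varphi\rangle_\eta = A\cdot N(u)[\varphi]$, where $N$ denotes the Choquard term. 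The nonlinearity is homogeneous: $N(\lambda u)=\lambda^{2\cdot 2^*_\mu-1}N(u)$ for $\lambda>0$. Hence
\[
\langle I'(\lambda u),\varphi\rangle=\lambda\langle u,\varphi\rangle_\eta-\lambda^{2\cdot2^*_\mu-1}N(u)[\varphi]=\bigl(\lambda A-\lambda^{2\cdot2^*_\mu-1}\bigr)N(u)[\varphi].
\]
Choosing $\lambda=A^{1/(2\cdot2^*_\mu-2)}>0$ makes this vanish for all $\varphi$, so $I'(\lambda u)=0$.

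The only step needing care is the $C^1$ property of $G$ in the second paragraph. The cutoff $v_+$ is not smooth, but $(s_+)^{p}$ with $p=2^*_\mu>1$ is $C^1$ with derivative $p(s_+)^{p-1}$. The continuity of the derivative then follows from dominated convergence and Vitali arguments, as in Lemma \ref{1lem4.2-pos}.
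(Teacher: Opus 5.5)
Your proof is correct and takes essentially the same route as the paper. Continuity of the Choquard double integral gives a ball on which it exceeds $1/2$; the $C^1$ property comes from the map $v\mapsto |v_+|^{2^*_\mu}$ into $L^{\frac{2n}{2n-\mu}}$ combined with Hardy--Littlewood--Sobolev; and the critical point is rescaled via $\lambda^{2(2^*_\mu-1)}=\eta(u)^2$. The paper divides $\eta(u)^2$ by the double integral, which equals $1$ on $\mathcal M$, so the two choices of $\lambda$ coincide. You simply supply the details the paper leaves implicit: the explicit derivative, the exponent bookkeeping, and the homogeneity computation.
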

\begin{proof}
Let
\[
N(u)=
\int_{\Omega}|\nabla u|^2\,dx
+
\int_{\mathbb{R}^{2n}}
\frac{|u(x)-u(y)|^2}{|x-y|^{n+2s}}\,dx\,dy ,
\]
and
\[
F(u)=
\int_{\Omega}\int_{\Omega}
\frac{|u_+(x)|^{2^*_\mu}|u_+(y)|^{2^*_\mu}}{|x-y|^\mu}\,dx\,dy,
\qquad
D(u)=F(u)^{1/2^*_\mu}.
\]
It is straightforward to check that
$N\in C^1(\mathcal X^{1,2}_0(\Omega))$.
We now prove that $F\in C^1(\mathcal X^{1,2}_0(\Omega))$.
% Set
% \[
% g_u(x)=|u_+(x)|^{2^*_\mu}.
% \]
% Then
% \[
% F(u)=
% \int_\Omega\int_\Omega
% \frac{g_u(x)g_u(y)}{|x-y|^\mu}\,dx\,dy .
% \]
Since $2^*_\mu>1$, the map
$u\mapsto g=|u_+|^{2^*_\mu}$
is continuously Fr\'echet differentiable from
$\mathcal X^{1,2}_0(\Omega)$ into
$L^{\frac{2n}{2n-\mu}}(\Omega)$. 
% with derivative
% \[
% g_u'(x)[\varphi]
% =
% 2^*_\mu
% |u_+(x)|^{2^*_\mu-2}u_+(x)\varphi(x).
% \]
By the Hardy-Littlewood-Sobolev inequality,
% the bilinear form
% \[
% (g,g)\mapsto
% \int_\Omega\int_\Omega
% \frac{g(x)g(y)}{|x-y|^\mu}\,dx\,dy
% \]
% is continuous on
% \[
% L^{\frac{2n}{2n-\mu}}(\Omega)\times
% L^{\frac{2n}{2n-\mu}}(\Omega).
% \]
% Therefore $F$ is differentiable. 
% % and
% % \[
% % F'(u)[\varphi]
% % =
% % 2(2^*_\mu)
% % \int_\Omega
% % \left(
% % \int_\Omega
% % \frac{|u_+(y)|^{2^*_\mu}}{|x-y|^\mu}dy
% % \right)
% % |u_+(x)|^{2^*_\mu-2}
% % u_+(x)\varphi(x)\,dx .
% % \]
$F\in C^1(\mathcal X^{1,2}_0(\Omega))$.
Finally, since $F(u)=1$ for every $u\in\mathcal M$ and $F$ is continuous,
there exists $\delta>0$ such that $F(v)>\tfrac12$ whenever
$\eta(v-u)<\delta$.
Setting $V_u=B_\delta(u)$, we have $v_+\not\equiv0$ for all $v\in V_u$, and hence
$D(v)>0$, so that $S(\cdot,\Omega)\in C^1(V_u)$.
Since $S'(u,\Omega)(w)=0$, it follows that
\begin{align}
&\tau(u)\Bigg(
\int_{\Omega} \nabla u \cdot \nabla w\,dx
+
\int_{\mathbb{R}^{2n}}
\frac{(u(x)-u(y))(w(x)-w(y))}{|x-y|^{n+2s}}
\, dx\,dy
\Bigg) \notag \\
&\quad =
\eta(u)^2 \tau(u)^{1-2^*_{\mu}}
\int_{\Omega}\int_{\Omega}
\frac{|u_+(y)|^{2^*_{\mu}}
|u_+(x)|^{2^*_{\mu}-1} w(x)}
{|x-y|^\mu}
\, dx\,dy .
\end{align}
Thus,
\[
\int_{\Omega} \nabla u\cdot \nabla w\,dx
+
\int_{\mathbb{R}^{2n}}
\frac{(u(x)-u(y))(w(x)-w(y))}
{|x - y|^{n+2s}} \, dx\,dy
=
\frac{\eta(u)^2}{\tau(u)^{2^*_{\mu}}}
\int_{\Omega}\int_{\Omega}
\frac{|u_+(y)|^{2^*_{\mu}}
|u_+(x)|^{2^*_{\mu}-1} w(x)}
{|x-y|^\mu}
\, dx\,dy .
\]
Choosing $\lambda>0$ such that
\begin{equation}\label{lam}
\lambda^{2(2^*_{\mu}-1)}
=
\frac{\eta(u)^2}
{\displaystyle
\int_{\Omega}\int_{\Omega}
\frac{|u(x)|^{2^*_{\mu}} |u(y)|^{2^*_{\mu}}}{|x-y|^\mu}
\, dx\,dy},
\end{equation}
we obtain $I'(\lambda u)=0$.
This completes the proof.
\QED
\end{proof}}

   \begin{Lemma}\label{lem2.88}
Let $\{v_k\}_k \subset \mathcal{M}$ be a Palais–Smale sequence for the functional $S(\cdot, \Omega)$ at level $c$. Then, the sequence
\[
u_k := \lambda_k v_k, \quad \text{where } \lambda_k := \left(S(v_k, \Omega)\right)^{\frac{n-2}{2(n - \mu + 2)}},
\]
is a Palais–Smale sequence for the energy functional $I$ at the level 
\[
\frac{n - \mu + 2}{2(2n - \mu)} \, c^{\frac{2n - \mu}{n - \mu + 2}}.
\]
\end{Lemma}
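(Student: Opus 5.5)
The plan is a direct computation. It rests on three facts: on $\mathcal M$ the denominator of $S(\cdot,\Omega)$ equals $1$; $S(\cdot,\Omega)$ is $0$-homogeneous; and the Choquard term in $I$ is $2\cdot 2^*_\mu$-homogeneous. We write
\[
N(v)=\eta(v)^2,\qquad F(v)=\int_\Omega\int_\Omega\frac{|v_+(x)|^{2^*_\mu}|v_+(y)|^{2^*_\mu}}{|x-y|^\mu}\,dx\,dy,\qquad B(v)w=\int_\Omega\int_\Omega\frac{|v_+(y)|^{2^*_\mu}|v_+(x)|^{2^*_\mu-1}w(x)}{|x-y|^\mu}\,dx\,dy,
\]
as in the proof of Proposition \ref{p2.4}. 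For $v_k\in\mathcal M$ we have $F(v_k)=1$, hence $S(v_k,\Omega)=\eta(v_k)^2\to c$. By Proposition \ref{p2.4}, $S(\cdot,\Omega)$ is $C^1$ near each $v_k$, and its derivative at $v_k$ is
\[
S'(v_k,\Omega)w = 2\langle v_k,w\rangle_\eta - 2\,S(v_k,\Omega)\,B(v_k)w ,
\]
because $\tau'(v)w=\tfrac{1}{2^*_\mu}F(v)^{1/2^*_\mu-1}F'(v)w = 2B(v)w$ when $F(v)=1$.

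\textbf{Energy level.} First we check the exponent. Since $2(2^*_\mu-1)=\frac{2(n-\mu+2)}{n-2}$, the choice of $\lambda_k$ is exactly $\lambda_k^{2(2^*_\mu-1)}=S(v_k,\Omega)$, which is consistent with \eqref{lam}. This gives $\lambda_k^2 S(v_k,\Omega)=\lambda_k^{2\cdot2^*_\mu}$. Therefore
\[
I(u_k)=\frac{\lambda_k^2}{2}\eta(v_k)^2-\frac{\lambda_k^{2\cdot 2^*_\mu}}{2\cdot 2^*_\mu}
=\Big(\frac12-\frac1{2\cdot2^*_\mu}\Big)\lambda_k^{2\cdot2^*_\mu}
=\frac{n-\mu+2}{2(2n-\mu)}\,S(v_k,\Omega)^{\frac{2n-\mu}{n-\mu+2}} .
\]
Here we used $\frac{2^*_\mu}{2^*_\mu-1}=\frac{2n-\mu}{n-\mu+2}$. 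Since $S(v_k,\Omega)\to c$, we get $I(u_k)\to \frac{n-\mu+2}{2(2n-\mu)}c^{\frac{2n-\mu}{n-\mu+2}}$.

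\textbf{Derivative.} Next, for every $w\in\mathcal X^{1,2}_0(\Omega)$, homogeneity gives
\[
\langle I'(\lambda_k v_k),w\rangle=\lambda_k\langle v_k,w\rangle_\eta-\lambda_k^{2\cdot2^*_\mu-1}B(v_k)w=\lambda_k\big(\langle v_k,w\rangle_\eta-S(v_k,\Omega)B(v_k)w\big)=\frac{\lambda_k}{2}S'(v_k,\Omega)w .
\]
Hence $\|I'(u_k)\|\le \frac{\lambda_k}{2}\|S'(v_k,\Omega)\|$. The sequence $\lambda_k$ is bounded because $S(v_k,\Omega)\to c$, so $I'(u_k)\to0$.

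\textbf{Main obstacle.} The only delicate point is what ``Palais–Smale for $S$'' means. If it means the free derivative $S'(v_k,\Omega)\to0$ in $X'$, the argument above is complete. If instead it means the derivative constrained to $\mathcal M$ tends to zero, the plan is to invoke the Lagrange multiplier rule: $S'(v_k)=\text{(tangential part)}+\theta_k F'(v_k)$. Testing with $w=v_k$ kills the left side, since $S'(v)v=0$ by $0$-homogeneity. On the right, $F'(v_k)v_k=2\cdot2^*_\mu$ is bounded away from zero, and the tangential part tested on $v_k$ is small because the $v_k$ are bounded. Together these force $\theta_k\to0$, so the free derivative also tends to zero and the previous step applies.
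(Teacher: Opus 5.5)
Your argument is correct and is essentially the paper's proof: on $\mathcal M$ you compute $S'(v_k,\Omega)$, use $\lambda_k^{2(2^*_\mu-1)}=S(v_k,\Omega)$ and homogeneity to get $\langle I'(\lambda_k v_k),w\rangle=\frac{\lambda_k}{2}\langle S'(v_k,\Omega),w\rangle$, and read off the level from $I(u_k)=\bigl(\frac12-\frac{1}{2\cdot 2^*_\mu}\bigr)\lambda_k^{2\cdot 2^*_\mu}$. Your Lagrange-multiplier remark addresses a point the paper passes over silently: it distinguishes constrained from free Palais--Smale, showing $\theta_k\to0$, and this step also needs $\|F'(v_k)\|$ bounded, which follows from Hardy--Littlewood--Sobolev.
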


\begin{proof} It is easy to see that for any $w\in\mathcal{X}^{1,2}_0(\Omega)$, we have
\begin{align}\label{eq:frechet-derivative}
\frac{1}{2} \left\langle S'(v_k, \Omega), w \right\rangle
&= \displaystyle\int_{\Omega} \nabla v_k\cdot \nabla w\,dx+\int_{\mathbb{R}^{2n}} \frac{(v_k(x) - v_k(y))(w(x) - w(y))}{|x - y|^{n + 2}}\,dx\,dy \notag \\
&\quad - \lambda_k^{2(2^*_{\mu}-1)} \int_{\Omega} \int_{ \Omega}
\frac{|(v_k)_+(y)|^{2^*_{\mu}} |(v_k)_+(x)|^{2^*_{\mu}-1}  w(x)}{|x - y|^\mu} \,dx\,dy.
\end{align}
Now, multiplying the above identity \eqref{eq:frechet-derivative} by $\lambda_k$, we get
\begin{equation}\label{eq:ps-transformed}
\begin{aligned}
\left\langle I'(u_k), w \right\rangle
&= \displaystyle\int_{\Omega} \nabla u_k\cdot \nabla w\,dx+\iint_{\mathbb{R}^{2n}} \frac{(u_k(x) - u_k(y))(w(x) - w(y))}{|x - y|^{n + 2s}}\,dx\,dy \\
&\quad - \int_{\Omega} \int_{ \Omega}
\frac{|(u_k)_+(y)|^{2^*_{\mu}} |(u_k)_+(x)|^{2^*_{\mu}-1}  w(x)}{|x - y|^\mu} \,dx\,dy.
\end{aligned}
\end{equation}
Since $v_k\in \mathcal{M}$, it follows 
\[
\lambda_k^{2(2^*_{\mu}-1)} = \eta(v_k)^2 = S(v_k, \Omega),
\]
that means $\lambda_k= \left(S(v_k, \Omega)\right)^{\frac{n-2}{2(n-\mu+2)}}.$
Now, since $v_k$ is a Palais–Smale sequence at level $c$, we have $S(v_k, \Omega) = c + o(1)$ as $k \to \infty$, which implies
\[
\lambda_k = (c + o(1))^{\frac{n - 2}{2(n - \mu + 2)}} \quad \text{which is bounded}.
\]
In particular, it follows that $\langle I'(\lambda_k v_k), w\rangle \to 0$ as $n\to\infty.$
Therefore, $u_k = \lambda_k v_k$ is bounded in $\mathcal{X}^{1,2}_0(\Omega)$.

Finally, to compute the energy level, we use the identity
\[
I(u_k) = \frac{1}{2} \eta(u_k)^2 - \frac{1}{2 \cdot 2^*_{\mu}} \int_{\Omega} \int_{ \Omega} \frac{|(u_k)_+(x)|^{2^*_{\mu}} |(u_k)_+(y)|^{2^*_{\mu}}}{|x - y|^\mu} \,dx\,dy.
\]
Then, we obtain
\[
\lim_{k \to \infty}I(u_k) = \left( \frac{1}{2} - \frac{1}{2 \cdot 2^*_{\mu}} \right) \lim_{k \to \infty}\lambda_k^{2 \cdot 2^*_{\mu}} = \frac{n - \mu + 2}{2(2n - \mu)} \, \lim_{k \to \infty}\lambda_k^{2 \cdot 2^*_{\mu}}.
\]
Substituting the expression of $\lambda_k$ from above, we finally get
\[
\lim_{k \to \infty} I(u_k) = \frac{n - \mu + 2}{2(2n - \mu)} \, c^{\frac{2n - \mu}{n - \mu + 2}}.
\]
This completes the proof.
\end{proof}
\QED

%%%%%%%%%%%%%%%%%%%%%%%%%%%%%%%%%%%%%%%%%%%%%%%%%%%%%%%%%%%%%%%%%%%

As a direct consequence of Proposition~\ref{p2.4}, Lemma \ref{lem2.88}, and the relationship between the functionals $S(\cdot, \Omega)$ and $I$, we obtain the following remark.

\begin{Remark}\label{cor:PS-for-S}
Since we proved that functional $I$ satisfies the Palais-Smale condition at all levels in the interval $(\beta, 2\beta)$ (see Lemma \ref{beta}),  it follows that  the functional $S(\cdot, \Omega)$ satisfies the Palais-Smale condition at all levels in the interval 
\[
\left(S_{H,L,C},\ 2^{\frac{n - \mu + 2}{2n - \mu}} S_{H,L,C} \right).
\]
\end{Remark}

%%%%%%%%%%%%%%%%%%%%%%%%%%%%

In order to prove Lemma \ref{lemma5.5}, we establish the following result, which is central to the study of the Coron-type problem discussed in this section.

\begin{Lemma}\label{lem4.6}
Let $u_0$ be given as in \eqref{eq5.2}. Then, for sufficiently large $n$, the inequality
\[
\frac{\|\nabla u_0\|_{L^2(\mathbb{R}^n)}^2 + (1-t)^{2-2s}[u_0]_s^2}
{\|u_0\|_{0}^2}
< 2^{\frac{n-\mu+2}{2n-\mu}}\, S_{H,L,C}
\]
holds, where
\(
\|u\|_0 
\) is defined by \eqref{no0}.
\end{Lemma}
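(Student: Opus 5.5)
Since $u_0$ is, up to the normalizing constant $c$ in \eqref{U}, an Aubin--Talenti bubble, Proposition~\ref{r3.1}(3) gives $\|\nabla u_0\|^2_{L^2(\mathbb{R}^n)} = S_{H,L,C}\,\|u_0\|_0^2$. The quotient therefore equals
\[
S_{H,L,C} + (1-t)^{2-2s}\,\frac{[u_0]_s^2}{\|u_0\|_0^2}.
\]
Because $t\in[0,1)$ and $2-2s>0$, we have $(1-t)^{2-2s}\le 1$, so it suffices to prove the $t$-independent inequality
\[
\frac{[u_0]_s^2}{\|\nabla u_0\|^2_{L^2(\mathbb{R}^n)}} < 2^{\frac{n-\mu+2}{2n-\mu}}-1
\qquad\text{for all } n \text{ large.}
\]
Since $\frac{n-\mu+2}{2n-\mu}\to\frac12$ as $n\to\infty$ (uniformly for $\mu$ in a compact subset of $(0,n)$, or for fixed $\mu$), the right-hand side tends to $\sqrt2-1>0$. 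It is therefore enough to show that the ratio $[u_0]_s^2/\|\nabla u_0\|_2^2$ tends to $0$ as $n\to\infty$, or at least stays below a fixed constant smaller than $\sqrt2-1$.

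To estimate the ratio, I will work in Fourier variables. With the normalization $C(n,s)$ fixed in the introduction, $[u]_s^2$ is a constant multiple of $\int |\xi|^{2s}|\hat u|^2\,d\xi$, while $\|\nabla u\|_2^2=\int|\xi|^2|\hat u|^2\,d\xi$. Splitting at $|\xi|=\Lambda$ gives
\[
\int|\xi|^{2s}|\hat u_0|^2 \le \Lambda^{2s}\|u_0\|_{L^2}^2+\Lambda^{2s-2}\|\nabla u_0\|_2^2 .
\]
This is useful only when $u_0\in L^2$, i.e.\ $n\ge5$, which holds for large $n$. Both quantities are explicit Beta-function integrals:
\[
\|u_0\|_2^2=\omega_{n-1}\int_0^\infty r^{n-1}(1+r^2)^{2-n}\,dr,
\qquad
\|\nabla u_0\|_2^2=(n-2)^2\,\omega_{n-1}\int_0^\infty r^{n+1}(1+r^2)^{-n}\,dr.
\]
Computing them through $\Gamma$-functions should give $\|u_0\|_2^2/\|\nabla u_0\|_2^2 \sim C/n^{2}$ or better. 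Choosing $\Lambda\sim n^{1/2}$ (or optimizing $\Lambda$) then makes the ratio $O(n^{-\gamma})$ for some $\gamma=\gamma(s)>0$. An alternative is the interpolation $[u]_s^2\le C\|u\|_2^{2(1-s)}\|\nabla u\|_2^{2s}$, whose constant can be tracked through Plancherel.

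The main obstacle is the dimensional dependence of the constants. The normalizing constant $C(n,s)$, the Plancherel constant relating $[u]_s^2$ to $\int|\xi|^{2s}|\hat u|^2$, and the Gamma-function asymptotics all vary with $n$. I need to check that the product still decays, or at least stays below $\sqrt2-1$. I will first compute $[u_0]_s^2$ exactly through the Fourier transform of $(1+|x|^2)^{-(n-2)/2}$, a Bessel potential with known $\hat u_0(\xi)\propto|\xi|K_1(|\xi|)$-type form. This reduces the ratio to a one-dimensional integral in $|\xi|$ whose $n$-dependence comes only from the radial weight $|\xi|^{n-1}$. A Laplace-method or Stirling estimate then shows the ratio behaves like $(n/2)^{s-1}$ up to bounded factors. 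If the exact route becomes messy, I will fall back on the splitting estimate with crude Stirling bounds, since only strict inequality for large $n$ is needed.
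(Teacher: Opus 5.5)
Your proposal is correct and is essentially the paper's argument. You make the same reduction (via $(1-t)^{2-2s}\le 1$ and $\|\nabla u_0\|_{L^2}^2=S_{H,L,C}\|u_0\|_0^2$) to $[u_0]_s^2<\bigl(2^{\frac{n-\mu+2}{2n-\mu}}-1\bigr)\|\nabla u_0\|_{L^2}^2$, then use an $L^2$--$\dot H^1$ interpolation bound on $[u_0]_s^2$ and the explicit Beta-integral ratio $\|\nabla u_0\|_{L^2}^2/\|u_0\|_{L^2}^2=\frac{n(n-2)(n-4)}{4(n-1)}$. The paper states the interpolation in real space as $[u]_s^2\le\frac{\omega_{n-1}2^{-s}}{s(1-s)}\|u\|_{L^2}^{2(1-s)}\|\nabla u\|_{L^2}^{2s}$, and your Fourier splitting reproduces it after optimizing $\Lambda$.

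The dimensional constant you flag as the main obstacle is harmless. The paper's $[u]_s^2$ carries no factor $C(n,s)$, so your Plancherel factor is $2C(n,s)^{-1}=\frac{2\pi^{n/2}\Gamma(1-s)}{s\,4^{s}\,\Gamma(\frac n2+s)}$, which tends to $0$ as $n\to\infty$. It plays exactly the role of the paper's $\omega_{n-1}\to 0$.
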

  {
\begin{proof}
Since $u_0$ is an extremal function for $S_{H,L,C}$, we have
\[
\|u_0\|_0^2
=
\left(
\int_{\mathbb{R}^n}\int_{\mathbb{R}^n}
\frac{|(u_0)_+(x)|^{2^*_{\mu}} |(u_0)_+(y)|^{2^*_{\mu}}}{|x-y|^\mu}
\,dx\,dy
\right)^{\frac{1}{2^*_\mu}},
\]
and hence
\[
\frac{\|\nabla u_0\|_{L^2(\mathbb{R}^n)}^2}{\|u_0\|_0^2}=S_{H,L,C}.
\]
Therefore, it is enough to show that
\begin{equation}\label{eq4.4}
[u_0]_s^2
<
\Big(2^{\frac{n-\mu+2}{2n-\mu}}-1\Big)
\int_{\mathbb{R}^n} |\nabla u_0|^2\,dx .
\end{equation}
By the same argument as in \cite[Lemma 4.3]{chakraborty2025global}, we have the interpolation estimate
\[
[u_0]_s^2
\le
\frac{\omega_{n-1}2^{-s}}{s(1-s)}
\left(\int_{\mathbb{R}^n}|u_0|^2\,dx\right)^{1-s}
\left(\int_{\mathbb{R}^n}|\nabla u_0|^2\,dx\right)^s ,
\]
where
\[
\omega_{n-1}=\frac{2\pi^{\frac{n}{2}}}{\Gamma(\frac{n}{2})}
\]
is the surface measure of the unit sphere in $\mathbb{R}^n$.
Thus, \eqref{eq4.4} follows provided
\[
\frac{\omega_{n-1}2^{-s}}{s(1-s)}
\left(\int_{\mathbb{R}^n}|u_0|^2\,dx\right)^{1-s}
\left(\int_{\mathbb{R}^n}|\nabla u_0|^2\,dx\right)^s
\le
\Big(2^{\frac{n-\mu+2}{2n-\mu}}-1\Big)
\int_{\mathbb{R}^n}|\nabla u_0|^2\,dx .
\]
Equivalently,
\begin{equation}\label{eq0512}
\Bigg[
\frac{\omega_{n-1}2^{-s}}
{s(1-s)\big(2^{\frac{n-\mu+2}{2n-\mu}}-1\big)}
\Bigg]^{\frac{1}{1-s}}
\le
\frac{\displaystyle\int_{\mathbb{R}^n} |\nabla u_0|^2\,dx}
{\displaystyle\int_{\mathbb{R}^n} |u_0|^2\,dx }.
\end{equation}
Recall that $u_0$ is the bubble defined in \eqref{eq5.2}, so both the $L^2$ norm and the Dirichlet energy can be computed explicitly. 
% Moreover
% \(
% S_{H,L,C}\,C(n,\mu)^{\frac{n-2}{2n-\mu}}=S_n,
% \)
% with the same minimizers.
Now, it is more convenient to use the explicit formulas for the above integrals \eqref{eq0512}.
First,
\[
\int_{\mathbb{R}^n} |u_0(x)|^2\,dx
=
\omega_{n-1}
\int_0^\infty
\frac{r^{n-1}}{(1+r^2)^{n-2}}\,dr
=
\omega_{n-1}\,
\frac{\Gamma\!\left(\frac n2-2\right)\Gamma\!\left(\frac n2\right)}{2\Gamma(n-2)}
=
\pi^{\frac{n}{2}}\frac{\Gamma\!\left(\frac n2-2\right)}{\Gamma(n-2)}.
\]
Next, a direct computation gives
\[
\nabla u_0(x)=-(n-2)x(1+|x|^2)^{-n/2},
\]
and therefore
\[
|\nabla u_0(x)|^2=(n-2)^2|x|^2(1+|x|^2)^{-n}.
\]
Hence
\[
\int_{\mathbb{R}^n}|\nabla u_0(x)|^2\,dx
=
(n-2)^2\omega_{n-1}
\int_0^\infty \frac{r^{n+1}}{(1+r^2)^n}\,dr.
\]
Using the Beta-function identity,
\[
\int_0^\infty \frac{r^{n+1}}{(1+r^2)^n}\,dr
=
\frac12\,B\!\left(\frac n2+1,\frac n2-1\right)
=
\frac12\,
\frac{\Gamma\!\left(\frac n2+1\right)\Gamma\!\left(\frac n2-1\right)}{\Gamma(n)}.
\]
Substituting this into the previous expression and using
$\omega_{n-1}=\dfrac{2\pi^{n/2}}{\Gamma(n/2)}$, we obtain
\[
\int_{\mathbb{R}^n}|\nabla u_0(x)|^2\,dx
=
(n-2)^2\pi^{\frac n2}
\frac{\Gamma\!\left(\frac n2+1\right)\Gamma\!\left(\frac n2-1\right)}
{\Gamma(n/2)\Gamma(n)}.
\]
Using the identity $z\Gamma(z)=\Gamma(z+1)$, we have
\[
\Gamma\!\left(\frac n2+1\right)=\frac n2\,\Gamma\!\left(\frac n2\right),
\qquad
\Gamma\!\left(\frac n2-1\right)=\frac{\Gamma(n/2)}{\frac n2-1}.
\]
Hence
\[
\int_{\mathbb{R}^n}|\nabla u_0(x)|^2\,dx
=
n(n-2)\pi^{\frac n2}\frac{\Gamma\!\left(\frac n2\right)}{\Gamma(n)}.
\]
% Using the Beta-function identity,
% \[
% \int_0^\infty \frac{r^{n+1}}{(1+r^2)^n}\,dr
% =
% \frac12\,B\!\left(\frac n2+1,\frac n2-1\right)
% =
% \frac12\,
% \frac{\Gamma\!\left(\frac n2+1\right)\Gamma\!\left(\frac n2-1\right)}{\Gamma(n)},
% \]
% we obtain
% \[
% \int_{\mathbb{R}^n}|\nabla u_0(x)|^2\,dx
% =
% n(n-2)\pi^{\frac{n}{2}}\frac{\Gamma\!\left(\frac n2\right)}{\Gamma(n)}.
% \]
Consequently,
\[
\frac{\displaystyle\int_{\mathbb{R}^n} |\nabla u_0|^2\,dx}
{\displaystyle\int_{\mathbb{R}^n} |u_0|^2\,dx}
=
n(n-2)\,
\frac{\Gamma\!\left(\frac n2\right)\Gamma(n-2)}
{\Gamma\!\left(\frac n2-2\right)\Gamma(n)}.
\]
Therefore, a sufficient condition for \eqref{eq4.4} is
\begin{equation}\label{eq45}
\Bigg[
\frac{\omega_{n-1}2^{-s}}
{s(1-s)\big(2^{\frac{n-\mu+2}{2n-\mu}}-1\big)}
\Bigg]^{\frac{1}{1-s}}
\le
n(n-2)\,
\frac{\Gamma\!\left(\frac n2\right)\Gamma(n-2)}
{\Gamma\!\left(\frac n2-2\right)\Gamma(n)}.
\end{equation}
Now, using
\[
\frac{\Gamma\!\left(\frac n2\right)}{\Gamma\!\left(\frac n2-2\right)}
=
\left(\frac n2-1\right)\left(\frac n2-2\right)
\]
and
\[
\frac{\Gamma(n-2)}{\Gamma(n)}
=
\frac{1}{(n-1)(n-2)},
\]
the right-hand side of \eqref{eq45} simplifies to
\[
\frac{n(n-2)(n-4)}{4(n-1)}.
\]
Finally, by Stirling's formula,
\[
\Gamma(z+1)\sim \sqrt{2\pi z}\left(\frac{z}{e}\right)^z
\qquad (z\to\infty),
\]
we have
\[
\omega_{n-1}=\frac{2\pi^{n/2}}{\Gamma(n/2)}\to 0
\qquad\text{as }n\to\infty,
\]
while
\[
2^{\frac{n-\mu+2}{2n-\mu}}-1 \longrightarrow \sqrt{2}-1 >0.
\]
Hence, the left-hand side of \eqref{eq45} tends to $0$ as $n\to\infty$. On the other hand,
\[
\frac{n(n-2)(n-4)}{4(n-1)}\sim \frac n4 \to \infty
\qquad\text{as }n\to\infty.
\]
Therefore \eqref{eq45} holds for all sufficiently large $n$, which implies \eqref{eq4.4}. This completes the proof.
\end{proof}}
\QED
%%%%%%%%%%%%%%%%%%%%%%%%%%%%%%%%%%%%%%%%%%%%%%%%%%%%%%%%%%%%%%%%%%%

\begin{Lemma}\label{lemma5.5}
Let \( f_t^{\sigma}(x) := \dfrac{g_t^{\sigma}(x)}{\|g_t^{\sigma}\|_{0}} \) and \( f_0(x) := \dfrac{g_0(x)}{\|g_0\|_{0}} \). 
Then,
for $R > 1$ sufficiently large, there exist positive constants $S_1\in \mathbb{R}$, $n_0\in\mathbb{N}$ such that
\begin{equation}\label{S1}
\sup_{(\sigma, t)\in \Sigma \times [0,1)} S(f_t^\sigma, \Omega) < S_1 < 2^{\frac{n-\mu+2}{2n-\mu}} S_{H,L,C}, ~ \text{for all }~n\geq n_0.
    \end{equation}
\end{Lemma}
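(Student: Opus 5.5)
Since $S(\cdot,\Omega)$ is $0$-homogeneous, $S(f_t^\sigma,\Omega)=S(g_t^\sigma,\Omega)$. Because $g_t^\sigma\in\mathcal X^{1,2}_0(\Omega)$ vanishes outside $\Omega$, the double integral over $\Omega\times\Omega$ equals the one over $\mathbb R^{2n}$. Hence
\[
S(f_t^\sigma,\Omega)=\frac{\|\nabla g_t^\sigma\|_{L^2(\mathbb R^n)}^2+[g_t^\sigma]_s^2}{\|g_t^\sigma\|_0^2}.
\]
I will compare this quotient with the one for the untruncated bubble $u_t^\sigma$, and then use scaling to reduce to $u_0$.

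\textbf{Step 1 (removing the cut-off).} By Lemma \ref{Lem:seminormConvergence}, $\eta(g_t^\sigma-u_t^\sigma)\to0$ as $R\to\infty$, uniformly in $(\sigma,t)$. Since $\eta(u_t^\sigma)$ is bounded uniformly (see Step 2), the numerator of the quotient for $g_t^\sigma$ differs from the one for $u_t^\sigma$ by $o_R(1)$, uniformly. For the denominator I use Lemma \ref{1lem5.3}, read as uniform convergence $\|g_t^\sigma\|_0\to\|u_t^\sigma\|_0=\|u_0\|_0>0$. This can also be derived directly from the HLS inequality and $\|g_t^\sigma-u_t^\sigma\|_{L^{2^*}}\lesssim\eta(g_t^\sigma-u_t^\sigma)$. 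Consequently
\[
\sup_{\sigma,t}\big|S(f_t^\sigma,\Omega)-Q(t)\big|\to0,\qquad Q(t):=\frac{\|\nabla u_t^\sigma\|_2^2+[u_t^\sigma]_s^2}{\|u_t^\sigma\|_0^2},
\]
where the uniformity in $\sigma$ is automatic since $Q$ does not depend on $\sigma$.

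\textbf{Step 2 (scaling).} Write $u_t^\sigma=\lambda_t^{(n-2)/2}u_0(\lambda_t(\cdot-t\sigma))$ with $\lambda_t=(1-t)^{-1}$. By Lemma \ref{lem:bubbling-profile}, the gradient term and the $\|\cdot\|_0$ norm are invariant. A change of variables gives $[u_t^\sigma]_s^2=\lambda_t^{2s-2}[u_0]_s^2=(1-t)^{2-2s}[u_0]_s^2$. Therefore
\[
Q(t)=\frac{\|\nabla u_0\|_2^2+(1-t)^{2-2s}[u_0]_s^2}{\|u_0\|_0^2}\le \frac{\|\nabla u_0\|_2^2+[u_0]_s^2}{\|u_0\|_0^2}=:Q_0,
\]
since $(1-t)^{2-2s}\le1$. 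This also confirms the uniform bound on $\eta(u_t^\sigma)$ used in Step 1.

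\textbf{Step 3 (choosing constants).} Lemma \ref{lem4.6} at $t=0$ gives $n_0$ such that $Q_0<2^{\frac{n-\mu+2}{2n-\mu}}S_{H,L,C}$ for all $n\ge n_0$. Fix such an $n$ and set
\[
S_1:=\tfrac12\Big(Q_0+2^{\frac{n-\mu+2}{2n-\mu}}S_{H,L,C}\Big).
\]
Let $\varepsilon:=S_1-Q_0>0$. By Step 1, choose $R$ large (depending on $n$) so that the $o_R(1)$ error is below $\varepsilon/2$. Then
\[
\sup_{\sigma,t}S(f_t^\sigma,\Omega)\le Q_0+\varepsilon/2<S_1,
\]
which is \eqref{S1}. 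The condition $R$ large is equivalent to $R_2/R_1=16R^2$ large, matching Theorem \ref{mainthm}.

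\textbf{Main obstacle.} The delicate point is the uniformity in $t\to1$ in Step 1. As $t\to1$ the bubble concentrates at $\sigma$, which lies inside the region $\{1/R<|x|<R\}$ where $\nu_R\equiv1$. The cut-off error therefore comes only from the tails near $|x|\sim R$ and $|x|\sim1/R$, where the mass of $u_t^\sigma$ is smallest for $t$ near $1$. So the uniformity should hold, and it is exactly what Lemma \ref{Lem:seminormConvergence} asserts. The nonlocal seminorm $[\cdot]_s$ is the term I would double check, since it is not localized. Its convergence comes from the bound $[v]_s^2\lesssim\|v\|_{L^2}^{2(1-s)}\|\nabla v\|_{L^2}^{2s}$ applied to $v=g_t^\sigma-u_t^\sigma$, or directly from \cite[Lemma 4.1]{chakraborty2025global}.
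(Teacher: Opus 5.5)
Your proposal is correct and follows the same route as the paper. You remove the cut-off via Lemma~\ref{Lem:seminormConvergence} (together with the convergence of $\|g_t^\sigma\|_0$), use the scaling $[u_t^\sigma]_s^2=(1-t)^{2-2s}[u_0]_s^2$ with the invariance of the gradient norm and $\|\cdot\|_0$, and conclude with Lemma~\ref{lem4.6}. Your use of $(1-t)^{2-2s}\le 1$ to reduce to $t=0$ gives a uniform gap, hence an explicit $S_1$ and an $R$ depending on $n$; this makes precise a step the paper leaves implicit when it passes from a strict inequality for each $t$ to a strict bound on the supremum.
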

\begin{proof}
From Lemma \ref{Lem:seminormConvergence}, we know that 
\[
\|g_t^{\sigma} - u_t^\sigma\|_{\mathcal{D}^{1,2}(\mathbb{R}^n)} \to 0 
\quad \text{and} \quad 
[g_t^{\sigma} - u_t^\sigma]_s \to 0 \quad \text{as } R \to \infty.
\]
Using in addition, the relations
\[
\|u_t^\sigma\|_{\mathcal{D}^{1,2}(\mathbb{R}^n)} = \|u_0\|_{\mathcal{D}^{1,2}(\mathbb{R}^n)}
\quad \text{and} \quad 
[u_t^\sigma]_s = (1-t)^{1-s}[u_0]_s,
\]
we obtain
\[
S(f_t^{\sigma};\Omega) = S(g_t^{\sigma};\Omega) \longrightarrow S(u_t^\sigma;\mathbb{R}^n) 
= \frac{\|\nabla u_0\|_{L^2(\mathbb{R}^n)}^2 + (1-t)^{2-2s}[u_0]_s^2}
{\|u_0\|^2_{0}}
\]
as $R \to \infty$, uniformly in $\sigma \in \Sigma$ and for $0 \leq t < 1$.  Furthermore, by Lemma \ref{lem4.6}, there exists an integer $n_0 \equiv n_0(s) \in \mathbb{N}$ such that for all $n \geq n_0$ the strict inequality
\[
\frac{\|\nabla u_0\|_{L^2(\mathbb{R}^n)}^2 + (1-t)^{2-2s}[u_0]_s^2}
{\|u_0\|_{0}^2} 
< 2^{\frac{n-\mu+2}{2n-\mu}}\, S_{H,L,C}, 
\]
holds.  
This completes the proof. 
\end{proof}
\QED

%%%%%%%%%%%%%%%%%%%%%%%%%%%%%%%%%%%%%%%%%%%%%%%%%%%%%%%%%%%%%%%%%%%%%%%%%%%%%%%%%%%%%%%%%%9Proof of Theorem 1.1############
Having developed the necessary preliminary results, we are now ready to finalize the proof of Theorem \ref{mainthm}. To this end, we employ a deformation argument to lower the energy in a neighbourhood of the first level.
\paragraph{Proof of Theorem \ref{mainthm}.}
\noindent
Let us fix a value of $R$ and suppose, for contradiction, that problem \eqref{1} does not admit a positive solution. 
% By Lemma \ref{beta}, the functional $I$ satisfies the Palais--Smale condition $(PS)_{\Theta}$ on the space $\mathcal{X}^{1,2}_0(\Omega)$ for 
% \[
% \Theta \in \left( 
% \tfrac{1}{2}\left( \tfrac{n-\mu+2}{2n-\mu} \right) S_{H,L,C}^{\frac{2n-\mu}{\,n-\mu+2}},
% \; \left( \tfrac{n-\mu+2}{2n-\mu} \right) S_{H,L,C}^{\frac{2n-\mu}{\,n-\mu+2}}
% \right).
% \]
As we have (in the view of Remark \ref{cor:PS-for-S}), $S(\cdot, \Omega)$ satisfies the  Palais-Smale condition $(PS)_{\Theta}$ on the constraint manifold $\mathcal{M}$ (see~\eqref{M}) provided
\[
\Theta \in \left(S_{H,L,C},\, 2^{\frac{n - \mu + 2}{2n - \mu}} S_{H,L,C}\right).
\]
We will argue by contradiction. Suppose $S(\cdot, \Omega)$ does not admit a critical value within this range. 
By the Deformation Lemma \cite[Theorem~ II.311]{struwe2000variational}, for any $\Theta \in \left(S_{H,L,C},\, 2^{\frac{n - \mu + 2}{2n - \mu}} S_{H,L,C}\right)$, there exist $\rho > 0$ and flow  $\mathcal{G}_{\Theta}: \mathcal{M}\times [0,1] \to \mathcal{M}$ such that
\(
\mathcal{G}_{\Theta}\big( \mathcal{M}_{\Theta+\rho},1 \big) \subset \mathcal{M}_{\Theta-\rho},
\)
where $\mathcal{M}_\Theta = \{ u \in \mathcal{M} : S(u, \Omega) < \Theta \}$. 
Now, for an arbitrary $\varepsilon > 0$, by compactness, the interval $[S_{H,L,C} + \varepsilon,\, S_1]$  can be covered by finitely many intervals of the form $(\Theta_j - \rho_j,\, \Theta_j + \rho_j)$ and compose the corresponding deformations  $\mathcal{G}_{\Theta}$ to construct a flow \(\mathcal{G}: \mathcal{M}\times [0,1] \to \mathcal{M}\)  such that
\(
\mathcal{G}(\mathcal{M}_{S_1}, 1) \subset \mathcal{M}_{S_{H,L,C} + \varepsilon},
\)
where $S_1$ is defined in  \eqref{S1}.
Moreover, we may assume that during the deformation for all $t\in [0,1]$, we have
\[
\mathcal{G}(u, t) = u \quad \text{for all } u \in \mathcal{M} \text{ with } S(u, \Omega) < S_{H,L,C} + \tfrac{\varepsilon}{2}.
\]
\noindent Consider the \emph{center of mass} map $G : \mathcal{M} \to \mathbb{R}^n$ defined as
\begin{equation*}
G(u)= \frac{\int_{\Omega} x|\nabla u|^{2} {\rm d}x}{\int_{\om} |\nabla u|^2{\rm d}x}.
\end{equation*}
\noi
\noindent We claim that for any open neighborhood $U$ of $\bar{\Omega}$, there exists a $\varepsilon > 0$ such that
$$ G(\mathcal{M}_{S_{H,L,C} + \varepsilon}) \subset U.$$ \noi 
Suppose, on the contrary, that no such $\varepsilon > 0$ exists. Then, for every $k \in \mathbb{N}$, we have $G(\mathcal{M}_{S_{H,L,C} + \frac{1}{k}}) \not\subset U$. Consequently, there exists a sequence $\{ u_k \}$ with $u_k \in \mathcal{M}_{{S_{H,L,C}} + \frac{1}{k}}$ such that $G(u_k) \notin U$. \\
Since $S(u_k; \Omega) < {S_{H,L,C}}+ \frac{1}{k}$ and by definition $S(u_k; \Omega) \geq S_{H,L,C}$, we conclude
$$
\lim_{k \to \infty} S(u_k; \Omega) = S_{H,L,C},\quad
S(u_k;\om) \geq S_{H,L,C}.$$
This gives 
$$\displaystyle \lim_{k \to \infty} S(u_k;\om) = S_{H,L,C}$$
i.e., $\{u_k\}$ is a minimizing sequence for $S_{H,L,C}$.
 By Ekeland's Variational Principle \cite[Theorem 8.5]{willem2012minimax}, there exists a Palais-Smale sequence $\{v_k\}$ for $S(\cdot; \Omega)$ at the level $S_{H,L,C}$. It is easy to see that the sequence $\{v_k\}$ is bounded in $\X(\Omega)$, and up to a subsequence, $v_k \rightharpoonup v$ weakly in $\X(\Omega)$. From Lemma \ref{beta}, we know that the functional $I$ satisfies the Palais Smale condition for any $c \in (\beta, 2\beta)$. Since $\{v_k\}$ is a Palais-Smale sequence for $S(\cdot,\Omega)$ at level $S_{H,L,C}$, by Lemma \ref{lem2.88}, the rescaled sequence $u_k = \lambda_k v_k$ with $\lambda_k = (S(v_k,\Omega))^{\frac{n-2}{2(n-\mu+2)}}$ forms a Palais-Smale sequence for the energy functional $I$ at level $\beta = \frac{1}{2}\left(\frac{n-\mu+2}{2n-\mu}\right)S_{H,L,C}^{\frac{2n-\mu}{n-\mu+2}}$.
% Setting $w_k := 2 S_{H,L,C}^{\frac{n-2}{2(n-\mu+2)} } v_k$, we obtain a PS sequence for the functional $I$ at the level $\left( \tfrac{n-\mu+2}{2n-\mu} \right) S_{H,L,C}^{\frac{2n-\mu}{\,n-\mu+2}}$, see Lemma \ref{lem2.88}. Once again, Theorem \ref{PS_decomposition} ensures $I\left(2 S_{H,L,C}^{\frac{n-2}{2(n-\mu+2)} } v_k\right) \leq \left( \tfrac{n-\mu+2}{2n-\mu} \right) S_{H,L,C}^{\frac{2n-\mu}{\,n-\mu+2}}.$ 
% 
% By applying Theorem~8.4 of \cite{garain2022regularity}, and using the equivalence of critical points of $I$ and $S(\cdot, \Omega)$, we conclude that either $v \equiv 0$ or $v > 0$. Thus our assumption enforces $v \equiv 0$ in $\mathbb{R}^n$.
%  {maximum principle refernce chnage}
\smallskip
\noindent Moreover, since $\|v_k\|_{0} = 1$ and
$$
\int_{\Omega} |\nabla v_k|^2\,{\rm d}x + [v_k]_s^2 \to S_{H,L,C} \quad \text{as } k \to \infty,
$$
we infer that $\|\nabla v_k\|_{L^2(\Omega)}$ is bounded. By Lemma \ref{compact_embedding}, it follows that $[v_k]_s^2 \to 0$ as $k \to \infty$, and thus
$$
\|\nabla v_k\|^2_{L^2(\Omega)} \to S_{H,L,C}.
$$
\noindent Additionally, we consider the weak convergence in the sense of measures
$$
|\nabla v_k|^2 \, dx \rightharpoonup \mu, \quad 
\int_{\Omega} \left( \frac{|(v_k)_+(y)|^{2^*_{\mu}}}{|x - y|^\mu} \right) |(v_k)_+|^{2^*_{\mu}} \, dx \rightharpoonup \nu,
$$
as $k \to \infty$ in the space of Radon measures.
Then, using the identity $S^{-1}_{H,L,C} \|\mu\| = \|\nu\|^{2/2^*_{\mu}}$ and Lemma 1.40 in \cite{willem2012minimax}, we deduce that both $\mu$ and $\nu$ are concentrated at a single point, say $x^{(0)}\subset \bar{\Omega}$. Hence, up to a subsequence,
\begin{equation*}
\int_{\Omega} \left( \frac{|(v_k)_+(y)|^{2^*_{\mu}}}{|x - y|^\mu} \right) |(v_k)_+|^{2^*_{\mu}} \, dx\rightharpoonup \delta_{x^{(0)}}, \;\;
|\nabla v_k|^2 \, {\rm d}x \rightharpoonup S_{H,L,C}~ \delta_{x^{(0)}}.
\end{equation*}

\noindent It follows that
$$
G(v_k) = \frac{\int_{\Omega} x |\nabla v_k|^2 \, dx}{\int_{\Omega} |\nabla v_k|^2 \, dx} \to \frac{S_{H,L,C} x^{(0)}}{S_{H,L,C}} = x^{(0)}.
$$

\noindent Since the topology of $\mathbb{R}^{n}$ is normal, there exists an open set $V$ such that $\bar{\om} \subset V \subset \bar{V} \subset \subset U$. Because $G(u_k) \notin U$ and $|G(u_k) - G(v_k)| \to 0$ as $k \to \infty$, it follows,
$
x^{(0)} \notin \bar{\Omega},
$
yielding a contradiction. Thus, our claim is proved. Since $\mathbb{R}^{n}$ is a normal topological space, we can find an open set $V$ such that  
\[
\bar{\Omega} \subset V \subset \bar{V} \subset\subset U.
\]  
Because $G(u_k) \notin U$ while $|G(u_k)-G(v_k)| \to 0$ as $k \to \infty$, it follows that  
\(
x^{(0)} \notin \bar{\Omega},
\) 
which is a contradiction. Therefore, the claim holds.

 Owing to the smoothness of $\Omega$, there exists an open neighborhood $U_0$ of $\bar{\Omega}$ such that every point $p \in U_0$ admits a unique nearest projection $q=\pi(p)\in \bar{\Omega}$, where the projection map $\pi: U_0 \to \bar{\Omega}$ is continuous.
Let $\varepsilon>0$ be chosen as above for this neighbourhood $U_0$, and recall the deformation flow $\mathcal{G}:\mathcal{M}\times [0,1]\to \mathcal{M}$ constructed earlier. We now define a map $h:\Sigma \times [0,1]\to \bar{\Omega}$ by  
\[
h(\sigma,t):=\pi\!\left(G\!\left(\mathcal{G}(f_t^\sigma,1)\right)\right),
\]  
where $f_t^\sigma$ is the two-parameter family introduced previously.

The map $h$ is well defined and continuous, and by the Lemma \ref{lemma5.5} it satisfies  
\[
\begin{aligned}
&h(\sigma,0)=\pi\!\left(G\!\left(\mathcal{G}(f_0,1)\right)\right)=:x^{(0)}\in \bar{\Omega}, \quad \forall \sigma\in \Sigma,\\
&h(\sigma,1)=\sigma, \quad \forall~ \sigma~\in \Sigma.
\end{aligned}
\]  
Consequently, the map $h$ defines a contraction of the unit sphere $\Sigma$ onto the point $x^{(0)} \in \overline{\Omega}$, which contradicts condition~\eqref{eq1.7}. Hence, our assumption is false, and it follows that $S_{H,L,C}(\cdot, \Omega)$ possesses a critical value. This means that there exists a function $u \in \mathcal{X}^{1,2}_0(\Omega)$ such that $u$ is a nonnegative solution of~\eqref{2}. 
Therefore, the proof of Theorem \ref{mainthm} is complete.
\QED

% \begin{thebibliography}{99}
% \bibitem{mukherjee2017fractional} T. Mukherjee, K. Sreenadh, \textit{Fractional Choquard equation with critical nonlinearities}, Nonlinear Differ. Equ. Appl. 24 (2017), 24-63.

% \bibitem{su2022regularity} X. Su, E. Valdinoci, Y. Wei, J. Zhang, \textit{Regularity results for solutions of mixed local and nonlocal elliptic equations}, Math. Z. 302 (2022), 1855-1878.

% \bibitem{biagi2021semilinear} S. Biagi, S. Dipierro, E. Valdinoci, E. Vecchi, \textit{Semilinear elliptic equations involving mixed local and nonlocal operators}, Proc. Roy. Soc. Edinburgh Sect. A 151 (5) (2021), 1611-1641.

% \bibitem{ros2014extremal} X. Ros-Oton, J. Serra, \textit{The extremal solution for the fractional laplacian}, Cal. Var., 50 (2014), 723-750.

% \bibitem{stein1970singular} E. Stein, \textit{Singular Integrals and Differentiability Properties of Functions}, Princeton Mathematical Series, vol. 30. Princeton University Press, Princeton (1970).
% \end{thebibliography}

\vspace{0.5cm}

\noindent{\bf Acknowledgments:} 
%Jacques Giacomoni was partially funded by IFCAM (Indo-French Centre for Applied Mathematics) IRL CNRS 3494.	
Tuhina Mukherjee acknowledges the financial support provided by CSIR-HRDG with sanction No. 25/0324/23/EMR-II. Lovelesh Sharma acknowledges the financial support provided by the Raman Charpak Fellowship, jointly funded by the Government of India and the Government of France.

\bibliographystyle{abbrv}
\bibliography{references}

 \end{document}